\newcounter{EQNR}
\theoremstyle{plain}
\newtheorem{thm}{Theorem}[section]
\numberwithin{equation}{section} 
\theoremstyle{plain}
\theoremstyle{plain}
\theoremstyle{definition}
\newtheorem{defn}[thm]{Definition}
\theoremstyle{plain}
\newtheorem{prop}[thm]{Proposition}
\newtheorem{lem}[thm]{Lemma}
\newtheorem*{cor*}{Corollary}
\newtheorem*{conj*}{Conjecture}
\newtheorem*{thm*}{Theorem}
\newtheorem*{lem*}{Lemma}
\newcommand{\bl}{\begin{lem}}
\newcommand{\el}{\end{lem}}
\newcommand{\bml}{\begin{multline}}
\newcommand{\eml}{\end{multline}}
\newcommand{\beq}{\begin{equation}}
\newcommand{\eeq}{\end{equation}}
\newcommand{\bp}{\begin{prop}}
\newcommand{\ep}{\end{prop}}
\newcommand{\bd}{\begin{defn}}
\newcommand{\ed}{\end{defn}}
\newcommand{\pf}{\begin{proof}}
\newcommand{\epf}{\end{proof}}
\newcommand{\F}{\mathcal{F}}
\begin{document}

\title{Effective sup-norm bounds on average \\ for cusp forms of even weight}
\author{J.S.~Friedman\footnote{The views expressed in this article are the author's own and not those 
of the U.S. Merchant Marine Academy, the Maritime Administration, the Department of Transportation, 
or the United States government.} \and J.~Jorgenson\footnote{The second named author acknowledges 
support from numerous PSC-CUNY grants.} \and J.~Kramer\footnote{The third named author 
acknowledges support from the DFG Graduate School \emph{Berlin Mathematical School}.}}

\maketitle

\begin{abstract}
\noindent
Let $\Gamma\subset\mathrm{PSL}_{2}(\mathbb{R})$ be a Fuchsian subgroup of the first kind acting 
on the upper half-plane $\mathbb{H}$. Consider the $d_{2k}$-dimensional space of cusp forms 
$\mathcal{S}_{2k}^{\Gamma}$ of weight $2k$ for $\Gamma$, and let $\{f_{1},\ldots,f_{d_{2k}}\}$ be 
an orthonormal basis of $\mathcal{S}_{2k}^{\Gamma}$ with respect to the Petersson inner product. 
In this paper we will give \emph{effective} upper and lower bounds for the supremum of the quantity
$S_{2k}^{\Gamma}(z):=\sum_{j=1}^{d_{2k}}\vert f_{j}(z)\vert^{2}\,\mathrm{Im}(z)^{2k}$ as $z$ ranges
through $\mathbb{H}$.
\end{abstract}

\section{Introduction}
\label{section-1}

\subsection{Statement of the main results}
\label{subsection-1.1}
Let $\Gamma\subset\mathrm{PSL}_{2}(\mathbb{R})$ be a Fuchsian subgroup of the first kind acting 
by fractional linear transformations on the upper half-plane $\mathbb{H}$, so that the quotient space 
$\Gamma\backslash\mathbb{H}$ has finite volume. For any integer $k\in\mathbb{N}_{\geq 1}$, we 
then consider the space $\mathcal{S}_{2k}^{\Gamma}$ of cusp forms of weight $2k$ for $\Gamma$, 
which is naturally equipped with the Petersson inner product. If $d_{2k}$ denotes the dimension of 
the $\mathbb{C}$-vector space $\mathcal{S}_{2k}^{\Gamma}$, we let $\{f_{1},\ldots,f_{d_{2k}}\}$ be 
an orthonormal basis of $\mathcal{S}_{2k}^{\Gamma}$. The purpose of this article is to determine 
\emph{effective} upper and lower bounds for the supremum of the quantity
\begin{align}
\label{formula-1.1}
S_{2k}^{\Gamma}(z):=\sum\limits_{j=1}^{d_{2k}}\vert f_{j}(z)\vert^{2}\,\mathrm{Im}(z)^{2k}
\end{align}
as $z$ ranges through $\mathbb{H}$. Optimal sup-norm bounds for the quantity~\eqref{formula-1.1} 
have been given in the case $k=1$ in the articles~\cite{JK04} and~\cite{JK11}, and for $k\geq 1$ in 
the paper~\cite{FJK16}. However, the sup-norm bounds obtained in these papers are not effective. 
The present article completes our previous investigations by now providing \emph{effective} optimal 
sup-norm bounds for the quantity~\eqref{formula-1.1}.

The main results of the paper are summarized in the following three theorems. When $\Gamma$ 
is cocompact and torsionfree, we have the following result.  

\textbf{Theorem A.} \emph{Let $\Gamma$ be cocompact and torsionfree, and let $k\in\mathbb{N}_
{\geq 2}$. Then, the bounds
\begin{align*}
\frac{2k-1}{4\pi}\leq\sup_{z\in\mathbb{H}}S_{2k}^{\Gamma}(z)\leq\frac{2k-1}{4\pi}+C_{\Gamma}\,
e^{-\delta_{\Gamma}k}
\end{align*}
hold, where the constants $C_{\Gamma}$ and $\delta_{\Gamma}$ are effectively computable as
\begin{align*}
C_{\Gamma}=\frac{3\,e^{4\pi g_{\Gamma}/\ell_{\Gamma}}}{\pi(g_{\Gamma}-1)}\frac{(\cosh(\ell_
{\Gamma})+1)^{2}}{\log((\cosh(\ell_{\Gamma})+1)/2)}\quad\text{and}\quad\delta_{\Gamma}=\frac
{1}{2}\log\bigg(\frac{\cosh(\ell_{\Gamma})+1}{2}\bigg);
\end{align*}
here $g_{\Gamma}$ and $\ell_{\Gamma}$ denote the genus and the length of the shortest closed
geodesic on $\Gamma\backslash\mathbb{H}$, respectively.}

In the general case, when $\Gamma$ is cofinite, possibly with elliptic elements, we let $\mathcal
{F}$ be a closed and connected fundamental domain for $\Gamma$. For $Y>0$, we consider the 
neighborhoods $\F_{j}^{Y}$ of the $j$-th cusp of $\mathcal{F}$ ($j=1,\ldots,h$), and we let 
$\mathcal{F}_{Y}$ denote the closure of the complement of the union of the cuspidal neighborhoods 
in $\mathcal{F}$, i.e., we have the decomposition
\begin{align*}
\mathcal{F}=\mathcal{F}_{Y}\cup\mathcal{F}_{1}^{Y}\cup\ldots\cup\mathcal{F}_{h}^{Y}.
\end{align*}
We let $\mathcal{E}:=\{e_{1},\ldots,e_{n}\}\subset\mathcal{F}$ be the set of elliptic fixed points in 
$\mathcal{F}$ and denote the order of $e_{j}$ by $n_{j}$ ($j=1,\ldots,n$). Then, we have the 
following result.

\textbf{Theorem B.} \emph{Let $\Gamma$ be cofinite, $k\in\mathbb{N}_{\geq 2}$, $Y_{0}>0$, and 
$Y:=\max\{2Y_{0},16/\sqrt{15}\}$. Then, with the above notations, we have the following statements:}
\begin{enumerate}
\item[(1)]
\emph{There exist effectively computable constants $B_{Y}$ and $\sigma_{Y}$ (depending on $Y$) 
such that the upper bound
\begin{align*}
\sup_{z\in\mathcal{F}_{Y}}S_{2k}^{\Gamma}(z)\leq\frac{2k-1}{4\pi}\bigg(1+6\sum\limits_{e_{j}\in
\mathcal{E}}(n_{j}-1)\bigg)+12(2k-1)B_{Y}\,\sigma_{Y}^{-(k-2)}
\end{align*}
holds.}
\item[(2)]
\emph{If $Y\geq k/(2\pi)$, there exist effectively computable constants $B_{Y}$ and $\sigma_{Y}$ 
(depending on $Y$)  such that the upper bounds
\begin{align*}
\sup_{z\in\mathcal{F}^{Y}_{j}}S_{2k}^{\Gamma}(z)\leq\frac{2k-1}{4\pi}\bigg(1+6\sum\limits_{e_{j}\in
\mathcal{E}}(n_{j}-1)\bigg)+12(2k-1)B_{Y}\,\sigma_{Y}^{-(k-2)}
\end{align*}
hold for $j=1,\ldots,h$.}
\item[(3)]
\emph{If $Y<k/(2\pi)$, there exists an effectively computable constant $B_{k,Y_{0}}$ (depending on 
$k$ and $Y_{0}$) such that the upper bounds
\begin{align*}
\sup_{z\in\mathcal{F}^{Y}_{j}}S_{2k}^{\Gamma}(z)\leq\frac{2k-1}{4\pi}+\frac{3(2k-1)}{2\pi}\bigg(B_{k,
Y_{0}}+\frac{\sqrt{k}\,e^{5/4}}{\sqrt{\pi}}\bigg)
\end{align*}
hold for $j=1,\ldots,h$.}
\end{enumerate}
The constant $\sigma_{Y}$ is given in Definition~\ref{definition-3.3} and effectively bounded from 
below in Lemma~\ref{lemma-3.4}. The constants $B_{Y}$ and $B_{k,Y_{0}}$ are given in Definition
\ref{definition-3.5} and in Definition~\ref{definition-4.3}, respectively. 

As an example, we provide in Subsection~\ref{subsection-5.4} explicit upper bounds for the supremum 
of the quantity $S_{2k}^{\Gamma}(z)$ in the case when $\Gamma$ is the modular group. This example 
shows how the present investigations give rise to an algorithm to determine effective upper bounds
for the supremum of the quantity $S_{2k}^{\Gamma}(z)$ for more general Fuchsian subgroups 
$\Gamma$.

\textbf{Theorem C.} \emph{Let $\Gamma=\mathrm{PSL}_{2}(\mathbb{Z})$, $k\in\mathbb{N}$, and 
$Y=16/\sqrt{15}=4.132...$ Then, the upper bounds
\begin{align*}
S_{2k}^{\Gamma}(z)\leq
\begin{cases}
\displaystyle
\frac{31(2k-1)}{4\pi}+72(2k-1)1.014^{-(k-2)}\qquad\qquad\qquad\quad\,\,\,\text{if }k\geq 2,\,z\in
\mathcal{F}_{Y}, \\[2mm]
\displaystyle
\frac{31(2k-1)}{4\pi}+72(2k-1)1.014^{-(k-2)}\qquad\qquad\quad\,\text{if }2\leq k\leq 25,\,z\in\mathcal
{F}^{Y}_{1}, \\[2mm]
\displaystyle
\frac{2k-1}{4\pi}+\frac{3(2k-1)}{2\pi}\bigg(4^{-k+4}\bigg(\frac{k}{2\pi}\bigg)^{4}+\frac{\sqrt{k}\,e^{5/4}}
{\sqrt{\pi}}\bigg)\qquad\text{if }k\geq 26,\,z\in\mathcal{F}^{Y}_{1},
\end{cases}
\end{align*}
hold.}

In addition to the main results listed above, we also provide lower bounds for the supremum of the 
quantity $S_{2k}^{\Gamma}(z)$ when $k\in\mathbb{N}_{\geq 2}$. The corresponding bounds in the
case $k=1$ are discussed separately.

\subsection{Results related to this paper}
\label{subsection-1.3}
As mentioned above, the present article is the completion of our previous investigations~\cite{JK04}, 
\cite{JK11}, and~\cite{FJK16} to determine sup-norm bounds for cusp forms on average. Our primary 
motivation for these studies originated from the article~\cite{Sil86}, where the author determined the 
arithmetic degree of a modular parametrization of an elliptic curve defined over $\mathbb{Q}$ in 
terms of various quantities, including the Petersson norm of the cusp form of weight $2$ associated 
to this parametrization. Following Silverman's article, the authors of~\cite{AbbesUllmo} proved for
the congruence subgroups $\Gamma=\Gamma_{0}(N)$ ($N$ squarefree; $2,3\nmid N$) and $k=1$
that for any $\varepsilon>0$, one has the bound
\begin{align*}
\sup_{z\in\mathbb{H}}S_{2}^{\Gamma_{0}(N)}(z)=O(N^{2+\varepsilon}),
\end{align*}
which was improved in~\cite{MU98} to $O(N^{1+\varepsilon})$. In~\cite{JK04}, this bound was further
improved by establishing a bound of the form $O(1)$, which holds uniformly for all subgroups $\Gamma$ 
of finite index of a fixed Fuchsian subgroup $\Gamma_{0}$ of the first kind. The methodology of~\cite
{JK04} was to study and employ the long-time asymptotic behavior of the heat kernel associated to 
the hyperbolic Laplacian acting on smooth functions on $\Gamma\backslash\mathbb{H}$; in~\cite
{JK11} the main result of~\cite{JK04} was re-proved by relating it to special values of non-holomorphic 
elliptic, hyperbolic, as well as parabolic Eisenstein series. 

Again a heat kernel approach was developed in~\cite{FJK16} in order to obtain bounds for the supremum
of the quantity $S_{2k}^{\Gamma}(z)$ for Fuchsian subgroups $\Gamma$ of the first kind and for $k\in
\mathbb{N}_{\geq 1}$, ultimately leading to uniform sup-norm bounds with \emph{ineffective} constants. 
In the present paper, we exploit knowledge of the resolvent kernel in order to obtain uniform sup-norm 
bounds with \emph{effective} constants as stated in Theorem~A and Theorem~B. We mention here also
results related to this paper obtained in \cite{AMM16}.

In a different direction, numerous authors have studied sup-norm bounds for individual holomorphic
modular forms and non-holomorphic Maass forms. One of the main motivations for these investigations 
is the fact that a certain sup-norm bound for Maass forms implies the Lindel\"of hypothesis for certain 
$L$-functions (see~\cite[p.~178]{Iw}). We refer the reader to the articles~\cite{BH10},~\cite{HT13},~\cite
{Te15}, and the references therein for some of the most recent results. As discussed in~\cite{FJK16}, 
the results for sup-norm bounds on average and the results for bounds on individual sup-norms should 
be viewed as complementary since neither result implies the other.  

Finally, we mention that effective sup-norm bounds of the type considered in this paper continue to 
prove to be useful in arithmetic geometry as, for example, the articles~\cite{BF14},~\cite{Jav14},~\cite
{Jav16}, or~\cite{JK14} show. 

\subsection{Outline of the paper}
\label{subsection-1.3}
In the next section we setup the basic notation and recall the main results needed in the sequel of 
the paper. After providing a couple of technical lemmas, the main goal of the third section is to give 
upper bounds for certain Poincar\'e series, when $z$ is ranging through the compact domain~$\mathcal
{F}_{Y}$. In the fourth section we give upper bounds for the Poincar\' e series under consideration, 
when $z$ ranges through the cuspidal neighborhoods $\mathcal{F}^{Y}_{j}$. Based on the bounds
established in the third and fourth section, the main results of the paper, in particular Theorems A, 
B, and C, are proven in the fifth section. The last section, presented as an appendix, collects various
materials which support the understanding of the paper.

\section{Preliminaries}
\label{section-2}
In this section we setup the basic notation and recall the main results needed in the sequel of the 
paper.

\subsubsection*{Quotient spaces.}
\label{subsubsection-2.1.1}
Let $\Gamma\subset\mathrm{PSL}_{2}(\mathbb{R})$ be a Fuchsian subgroup of the first kind acting 
by fractional linear transformations on the upper half-plane $\mathbb{H}:=\{z\in\mathbb{C}\,\vert\,z=
x+iy\,,\,y>0\}$. Let $M$ be the quotient space $\Gamma\backslash\mathbb{H}$ and $g_{\Gamma}$ 
the genus of $M$. In the sequel, we identify $M$ with a fundamental domain $\mathcal{F}\subset
\mathbb{H}$ for the group $\Gamma$, which we assume to be closed and connected. We denote 
the set of  geodesic line segments which form the boundary $\partial\mathcal{F}$ of $\mathcal{F}$ 
by $\mathcal{S}$.

Denote by 
\begin{align*}
\mathcal{P}=\{p_{1},\ldots,p_{h}\}
\end{align*}
the set of cusps of $\mathcal{F}$. Let $\sigma_{j}\in\mathrm{PSL}_{2}(\mathbb{R})$ be a scaling 
matrix of the cusp $p_{j}$, that is, $p_{j}=\sigma_{j}i\infty$ with stabilizer subgroup $\Gamma_{p_
{j}}$ described as
\begin{align*}
\sigma_{j}^{-1}\Gamma_{p_{j}}\sigma_{j}=\bigg\langle\begin{pmatrix}1&1\\0&1\end{pmatrix}\bigg
\rangle\qquad(j=1,\ldots,h).
\end{align*}
For $Y>0$, we let $\mathcal{F}^{Y}_{j}\subset\mathcal{F}$ denote the neighborhood of the cusp 
$p_{j}$ characterized by
\begin{align*}
\sigma_{j}^{-1}\mathcal{F}^{Y}_{j}=\{z=x+iy\in\mathbb{H}\,\vert\,-1/2\leq x\leq 1/2,\,y\geq Y\}\qquad
(j=1,\ldots,h).
\end{align*}
With these notations, we define $\mathcal{F}_{Y}$ to be the closure of the complement  of the 
union $\mathcal{F}^{Y}_{1}\cup\ldots\cup\mathcal{F}^{Y}_{h}$ in $\mathcal{F}$, i.e., 
\begin{align*}
\mathcal{F}_{Y}:=\mathrm{cl}\big(\mathcal{F}\setminus\big(\mathcal{F}^{Y}_{1}\cup\ldots\cup
\mathcal{F}^{Y}_{h}\big)\big),
\end{align*}
which is compact; we note that $\mathcal{F}_{Y}=\mathcal{F}$, if $\Gamma$ is cocompact. We 
choose $0<m_{Y}<M_{Y}$ such that for all $z\in\mathcal{F}_{Y}$ the inequalities
\begin{align*}
m_{Y}\leq\mathrm{Im}(\sigma_{j}^{-1}z)\leq M_{Y}
\end{align*}
hold for all $j=1,\ldots,h$; we note that $m_{Y}$ and $M_{Y}$ depend on the choice of $Y$.

Denote by 
\begin{align*}
\mathcal{E}=\{e_{1},\ldots,e_{n}\}
\end{align*}
the set of of elliptic fixed points of $\mathcal{F}$, let $n_{j}$ denote the order of $e_{j}$, and let 
$\theta_{j}:=2\pi/n_{j}$ be the rotation angle of the corresponding primitive elliptic element ($j=1,
\ldots,n$). We put
\begin{align*}
\theta_{\Gamma}:=\min_{j=1,\ldots,n}\theta_{j};
\end{align*}
note that $\theta_{\Gamma}>0$.

\subsubsection*{Hyperbolic metric.}
\label{subsubsection-2.1.2}
We denote by $\mathrm{d}s^{2}_{\mathrm{hyp}}(z)$ the line element and by $\mu_{\mathrm{hyp}}
(z)$ the volume form corresponding to the hyperbolic metric on $\mathbb{H}$, which is compatible 
with the complex structure of $\mathbb{H}$ and has constant curvature equal to $-1$. Locally on 
$\mathbb{H}\setminus\Gamma\mathcal{E}$, we have
\begin{align*}
\mathrm{d}s^{2}_{\mathrm{hyp}}(z)=\frac{\mathrm{d}x^{2}+\mathrm{d}y^{2}}{y^{2}}\quad\textrm
{and}\quad\mu_{\mathrm{hyp}}(z)=\frac{\mathrm{d}x\wedge\mathrm{d}y}{y^{2}}\,.
\end{align*}
For $z,w\in\mathbb{H}$, we let $\mathrm{dist}_{\mathrm{hyp}}(z,w)$ denote the hyperbolic 
distance between these two points. For later purposes, it is useful to introduce the displacement
function
\begin{align}
\label{formula-2.1}
\sigma(z,w):=\cosh^{2}\bigg(\frac{\mathrm{dist}_{\mathrm{hyp}}(z,w)}{2}\bigg)=\frac{\vert z-\bar
{w}\vert^{2}}{4\,\mathrm{Im}(z)\mathrm{Im}(w)}\,.
\end{align}
We denote the hyperbolic length of the shortest closed geodesic on $M$ by $\ell_{\Gamma}$. 
Finally, for a domain $D\subset\mathbb{H}$, we denote its hyperbolic diameter by $\mathrm{diam}_
{\mathrm{hyp}}(D)$ and its hyperbolic volume by $\mathrm{vol}_{\mathrm{hyp}}(D)$.

\subsubsection*{Cusp forms of higher weights.} 
\label{subsubsection-2.1.3}
For $k\in\mathbb{N}_{\geq 1}$, we let $\mathcal{S}_{2k}^{\Gamma}$ denote the space of cusp 
forms of weight $2k$ for $\Gamma$, i.e., the space of holomorphic functions $f\colon\mathbb{H}
\longrightarrow\mathbb{C}$, which have the transformation behavior
\begin{align*}
f(\gamma z)=(cz+d)^{2k}f(z)
\end{align*}
for all $\gamma=\big(\begin{smallmatrix}a&b\\c&d\end{smallmatrix}\big)\in\Gamma$, and which 
vanish at all the cusps of $M$. The space $\mathcal{S}_{2k}^{\Gamma}$ is equipped with the
Petersson inner product
\begin{align*}
\langle f_{1},f_{2}\rangle:=\int\limits_{M}f_{1}(z)\overline{f_{2}(z)}\,y^{2k}\mu_{\mathrm{hyp}}(z)
\qquad(f_{1},f_{2}\in\mathcal{S}_{2k}^{\Gamma}).
\end{align*}
By letting $d_{2k}:=\dim_{\mathbb{C}}(\mathcal{S}_{2k}^{\Gamma})$ and choosing an orthonormal 
basis $\{f_{1},\ldots,f_{d_{2k}}\}$ of $\mathcal{S}_{2k}^{\Gamma}$, we define the quantity
\begin{align*}
S_{2k}^{\Gamma}(z):=\sum_{j=1}^{d_{2k}}\vert f_{j}(z)\vert^{2}\,y^{2k}.
\end{align*}
We note that the quantity $S_{2k}^{\Gamma}(z)$ is invariant under the action of the Fuchsian 
subgroup~$\Gamma$.

\subsubsection*{Maass forms of higher weights.} 
\label{subsubsection-2.1.4}
Following~\cite{Roelcke},~\cite{Fay}, or~\cite{Fischer}, we introduce for any $k\in\mathbb{N}_{\geq 
1}$, the space $\mathcal{V}_{k}^{\Gamma}$ of functions $\varphi\colon\mathbb{H}\longrightarrow
\mathbb{C}$, which have the transformation behavior
\begin{align*}
\varphi(\gamma z)=\bigg(\frac{cz+d}{c\bar{z}+d}\bigg)^{k}\varphi(z)=e^{2ik\,\textrm{arg}(cz+d)}
\varphi(z)
\end{align*}
for all $\gamma=\big(\begin{smallmatrix}a&b\\c&d\end{smallmatrix}\big)\in\Gamma$. For $\varphi
\in\mathcal{V}_{k}^{\Gamma}$, we set
\begin{align*}
\Vert\varphi\Vert^{2}:=\int\limits_{M}\vert\varphi(z)\vert^{2}\mu_{\mathrm{hyp}}(z),
\end{align*}
whenever it is defined. We then introduce the Hilbert space
\begin{align*}
\mathcal{H}_{k}^{\Gamma}:=\big\{\varphi\in\mathcal{V}_{k}^{\Gamma}\,\big\vert\,\Vert\varphi\Vert<
\infty\big\}
\end{align*}
equipped with the inner product
\begin{align*}
\langle\varphi_{1},\varphi_{2}\rangle:=\int\limits_{M}\varphi_{1}(z)\overline{\varphi_{2}(z)}\mu_
{\mathrm{hyp}}(z)\qquad(\varphi_{1},\varphi_{2}\in\mathcal{H}_{k}^{\Gamma}).
\end{align*}
The generalized Laplacian
\begin{align*}
\Delta_{k}:=-y^{2}\bigg(\frac{\partial^{2}}{\partial x^{2}}+\frac{\partial^{2}}{\partial y^{2}}\bigg)+2iky
\frac{\partial}{\partial x}
\end{align*}
acts on the smooth functions of $\mathcal{H}_{k}^{\Gamma}$ and extends to an essentially 
self-adjoint linear operator acting on a dense subspace of $\mathcal{H}_{k}^{\Gamma}$.

From~\cite{Fay} or~\cite{Fischer}, we quote that the eigenvalues for the equation
\begin{align*}
\Delta_{k}\varphi(z)=\lambda\varphi(z)\qquad(\varphi\in\mathcal{H}_{k}^{\Gamma})
\end{align*}
satisfy the inequality $\lambda\geq k(1-k)$.

Furthermore, if $\lambda=k(1-k)$, then the corresponding eigenfunction $\varphi$ is of the form 
$\varphi(z)=f(z)y^{k}$, where $f$ is a cusp form of weight $2k$ for $\Gamma$, i.e., we have an 
isomorphism of $\mathbb{C}$-vector spaces
\begin{align*} 
\ker\big(\Delta_{k}-k(1-k)\big)\cong\mathcal{S}_{2k}^{\Gamma}.
\end{align*}

\subsubsection*{Resolvent kernel.} 
\label{subsubsection-2.1.5}
From~\cite{Fischer}, we recall that for $k\in\mathbb{N}_{\geq 1}$, the resolvent kernel on $\mathbb
{H}$ associated to $\Delta_{k}$ is the integral kernel $G_{k}(s;z,w)$, which inverts the operator 
$\Delta_{k}-s(1-s)\mathrm{id}$, where $s\in W_{k}:=\mathbb{C}\setminus\{k-n,\,-k-n\,\vert\,n\in
\mathbb{N}\}$ and $z,w\in\mathbb{H}$.

When $z=w$, the resolvent kernel has a singularity, which we cancel out by considering the 
difference
\begin{align*}
G_{k}(s;z,w)-G_{k}(t;z,w)
\end{align*}
for $s,t\in W_{k}$. In particular, by taking $t=s+1$, we define for $s\in W_{k}$ and $z,w\in\mathbb{H}$
the function
\begin{align}
\label{formula-2.2}
g_{k}(s;z,w):=G_{k}(s;z,w)-G_{k}(s+1;z,w).
\end{align}
For an explicit formula for the resolvent kernel and further properties of the functions $G_{k}(s;z,w)$ 
and $g_{k}(s;z,w)$, we refer to Subsection~\ref{subsection-6.1} of the Appendix.

\subsubsection*{Spectral expansion.}
\label{subsubsection-2.1.6}
Let $\{\lambda_{j}\}_{j=0}^{\infty}$ be the set of eigenvalues of $\Delta_{k}$ acting on the Hilbert 
space $\mathcal{H}_{k}^{\Gamma}$, let $\{\varphi_{j}\}_{j\geq 0}$ denote the corresponding 
orthonormal basis of eigenfunctions, and let $E_{j}(\cdot,s')$ be the Eisenstein series associated 
to the cusp $p_{j}$ ($j=1,\ldots,h$); for the precise definition, see~\cite[\S~1.5]{Fischer}.

\begin{lem} 
\label{lemma-2.1}
Let $s,t\in W_{k}\cap\mathbb{R}$ such that $t>s>1$. Then, letting $\lambda:=s(1-s)$ and $\mu:=
t(1-t)$, we have
\begin{align} 
\notag
&\sum\limits_{j=0}^{\infty}\bigg(\frac{1}{\lambda_{j}-\lambda}-\frac{1}{\lambda_{j}-\mu}\bigg)\vert
\varphi_{j}(z)\vert^{2}+\frac{1}{4\pi}\sum\limits_{j=1}^{h}\int\limits_{-\infty}^{\infty}\bigg(\frac{1}{\frac
{1}{4}+r^{2}-\lambda}-\frac{1}{\frac{1}{4}+r^{2}-\mu}\bigg)\bigg\vert E_{j}\bigg(z,\frac{1}{2}+ir\bigg)
\bigg\vert^{2}\,\mathrm{d}r \\
\label{formula-2.3}
&=-\frac{1}{4\pi}\big(\psi(s+k)+\psi(s-k)-\psi(t+k)-\psi(t-k)\big)+\sum\limits_{\substack{\gamma\in
\Gamma\setminus\{\mathrm{id}\}\\\gamma=\big(\begin{smallmatrix}a&b\\c&d\end{smallmatrix}\big)}
}\bigg(\frac{c\bar{z}+d}{cz+d}\bigg)^{k}\bigg(\frac{z-\gamma\bar{z}}{\gamma z-\bar{z}}\bigg)^
{k}g_{k}(s;z,\gamma z);
\end{align}
here $\psi(\cdot)$ is the digamma function. Furthermore, all sums and integrals in the above formula 
converge uniformly for $s,t\in W_{k}$ as chosen above and $z\in\mathbb{H}$.
\end{lem}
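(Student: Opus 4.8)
The plan is to read off the identity as the value on the diagonal $w=z$ of a regularized automorphic resolvent kernel, by comparing its \emph{geometric} expansion as a sum over $\Gamma$ with its \emph{spectral} expansion in the eigenfunctions and Eisenstein series of $\Delta_{k}$. First I would periodize the free resolvent: set
\[
\widehat{G}_{k}(s;z,w):=\sum_{\gamma\in\Gamma} j_{k}(\gamma;z,w)\,G_{k}(s;z,\gamma w),
\]
where $j_{k}(\gamma;z,w)$ is the weight-$k$ automorphy factor forcing $\widehat{G}_{k}$ to transform correctly in both variables and reducing, on the diagonal $w=z$, to $\bigl(\tfrac{c\bar{z}+d}{cz+d}\bigr)^{k}\bigl(\tfrac{z-\gamma\bar{z}}{\gamma z-\bar{z}}\bigr)^{k}$ (with $c,d$ the lower row of $\gamma$), exactly the factor in the statement. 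For $s\in W_{k}$ with $\Re(s)>1$ this series converges, because the explicit formula for $G_{k}(s;z,w)$ recalled in the Appendix shows exponential decay in $\mathrm{dist}_{\mathrm{hyp}}(z,w)$, while the number of $\gamma$ with $\mathrm{dist}_{\mathrm{hyp}}(z,\gamma w)$ bounded grows at the controlled exponential rate governing the critical exponent $1$ of the cofinite group $\Gamma$.

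The second ingredient is the spectral expansion of $\widehat{G}_{k}(s;\cdot,\cdot)$ as the integral kernel of the resolvent $(\Delta_{k}-s(1-s)\,\mathrm{id})^{-1}$ on $\mathcal{H}_{k}^{\Gamma}$. Since $\Delta_{k}$ is essentially self-adjoint and its spectrum is exhausted by the orthonormal basis $\{\varphi_{j}\}$ together with the continuous part carried by the $E_{j}(\cdot,\tfrac{1}{2}+ir)$, the spectral theorem gives
\[
\widehat{G}_{k}(s;z,w)=\sum_{j=0}^{\infty}\frac{\varphi_{j}(z)\,\overline{\varphi_{j}(w)}}{\lambda_{j}-s(1-s)}+\frac{1}{4\pi}\sum_{j=1}^{h}\int_{-\infty}^{\infty}\frac{E_{j}(z,\tfrac{1}{2}+ir)\,\overline{E_{j}(w,\tfrac{1}{2}+ir)}}{\tfrac{1}{4}+r^{2}-s(1-s)}\,\mathrm{d}r .
\]

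Because $G_{k}(s;z,w)$ is singular as $w\to z$, the diagonal value $\widehat{G}_{k}(s;z,z)$ is not defined and the discrete sum converges too slowly to be evaluated termwise there. The device is therefore to pass to the difference $\widehat{G}_{k}(s;z,w)-\widehat{G}_{k}(t;z,w)$: the singular part of the free resolvent does not depend on $s$, so the difference extends continuously across $w=z$, and spectrally the weight $\tfrac{1}{\lambda_{j}-\lambda}-\tfrac{1}{\lambda_{j}-\mu}=O(\lambda_{j}^{-2})$ renders the discrete sum absolutely convergent. Setting $w=z$ in this difference produces the left-hand side of the lemma from the spectral side, and $\sum_{\gamma\in\Gamma} j_{k}(\gamma;z,z)\bigl(G_{k}(s;z,\gamma z)-G_{k}(t;z,\gamma z)\bigr)$ from the geometric side. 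I would then split off the term $\gamma=\mathrm{id}$ and compute its regularized contribution $G_{k}(s;z,z)-G_{k}(t;z,z)$ directly from the closed form of the free resolvent on the diagonal, where the hypergeometric constant terms assemble into the digamma combination $-\tfrac{1}{4\pi}\bigl(\psi(s+k)+\psi(s-k)-\psi(t+k)-\psi(t-k)\bigr)$; the remaining terms $\gamma\neq\mathrm{id}$ give the stated geometric sum.

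The hard part will be the rigorous justification of the diagonal evaluation and of the uniform convergence asserted for all $z\in\mathbb{H}$. One must check that the difference kernel is genuinely continuous up to and across the diagonal, so that setting $w=z$ and evaluating the spectral sum and the Eisenstein integral termwise are both legitimate, and that each of the three objects---the discrete sum, the continuous-spectrum integral, and the geometric sum over $\gamma\neq\mathrm{id}$---converges uniformly in $z$ and in $s,t$ over the stated range. By $\Gamma$-invariance this uniformity may be tested on a fundamental domain, with the behaviour near the cusps handled through the decay of the $\varphi_{j}$ and the growth control of the $E_{j}(z,\tfrac{1}{2}+ir)$; on the geometric side it rests on the exponential decay in $\mathrm{dist}_{\mathrm{hyp}}(z,\gamma z)$ of $G_{k}(s;z,\gamma z)$ for $\Re(s)>1$ combined with a lattice-point count, and on the spectral side on Weyl-type bounds for the counting function of the $\lambda_{j}$. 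Marshalling these estimates, rather than the formal manipulation of kernels, is the real content of the proof.
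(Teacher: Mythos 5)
The paper does not actually prove this lemma: it quotes the identity verbatim from Fischer's lecture notes (\cite{Fischer}, p.~46, eq.~(2.1.4)), adding only two remarks --- that Fischer works with subgroups of $\mathrm{SL}_{2}(\mathbb{R})$ rather than $\mathrm{PSL}_{2}(\mathbb{R})$, whence a factor $1/2$ in the normalization, and that the asserted uniform convergence follows by applying Dini's theorem to Fischer's formula. Your sketch is, in substance, a faithful reconstruction of the proof in that cited source: periodization of the free resolvent with the weight-$k$ automorphy factor, the spectral expansion of the automorphic kernel, the difference $G_{k}(s;\cdot)-G_{k}(t;\cdot)$ to cancel the $s$-independent logarithmic diagonal singularity, and the evaluation of the identity term via the logarithmic case $c=a+b$ of the hypergeometric function at $Z=1$, whose constant terms produce exactly $-\tfrac{1}{4\pi}\bigl(\psi(s+k)+\psi(s-k)-\psi(t+k)-\psi(t-k)\bigr)$. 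Two small points of comparison. First, where you propose to establish uniform convergence by marshalling Weyl-type bounds, cusp decay of the $\varphi_{j}$, and growth control of the Eisenstein series, the paper's route is much lighter: in the range where the lemma is used ($s=k+\varepsilon$, $t=s+1$, so $\mu<\lambda<k(1-k)\leq\lambda_{j}$) every term on the spectral side is nonnegative and the limit function is continuous, so Dini's theorem yields the uniformity without any quantitative spectral input. Second, a minor imprecision in your outline: the difference trick is needed not only to make the discrete sum absolutely convergent but also for the continuous-spectrum integral, whose single-resolvent weight $1/(\tfrac14+r^{2}-\lambda)$ is borderline divergent on the diagonal by the local Weyl law; your phrasing attributes the regularization only to the discrete part. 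With that caveat, your plan is the right one --- it is the proof the paper outsources.
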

\begin{proof}
For the proof, we refer to~\cite[p.~46, eq.~(2.1.4)]{Fischer}.
\end{proof}
Note that in~\cite{Fischer} subgroups of $\mathrm{SL}_{2}(\mathbb{R})$ are used instead of $\mathrm
{PSL}_{2}(\mathbb{R})$. Hence, the difference by a factor of $1/2$. Also, one needs to apply Dini's 
theorem to~\cite[p.~46, eq.~(2.1.4)]{Fischer} to obtain the uniform convergence.

\section{Effective estimates in the compact domain $\mathcal{F}_{Y}$} 
\label{section-3}
The main goal of this section is to give an upper bound for Poincar\'e series of the type
\begin{align*}
P_{k,\varepsilon}^{\Gamma}(z):=\sum\limits_{\gamma\in\Gamma\setminus\{\mathrm{id}\}}\sigma(z,
\gamma z)^{-(k+\varepsilon)}
\end{align*}
for $k\in\mathbb{N}_{\geq 2}$ and $\varepsilon>0$, when $z$ is ranging through the compact domain 
$\mathcal{F}_{Y}$. To obtain this bound, we first need a couple of technical lemmas.

\subsection{The displacement lemma}
\label{subsection-3.1}
In this subsection, we will give a lower bound for the displacement function $\sigma(z,\gamma z)$, 
when $\gamma\in\Gamma$ has no elliptic fixed points in the fundamental domain $\mathcal{F}$
and $z$ is ranging through the compact domain $\mathcal{F}_{Y}$. We start with the following 
definition.

\begin{defn}
\label{definition-3.1}
Recalling that $\mathcal{E}$ is the set of elliptic fixed points in the fundamental domain $\mathcal
{F}$ and that the boundary $\partial\mathcal{F}$ consists of the geodesic line segments in the set 
$\mathcal{S}$, we define the quantity
\begin{align}
\label{formula-3.1}
\mu_{\Gamma}:=\inf_{\substack{S\in\mathcal{S}\\e\in\mathcal{E}\setminus S}}\mathrm{dist}_{\mathrm
{hyp}}(S,e),
\end{align}
which will be bounded in the next lemma.
\end{defn}

\begin{lem} 
\label{lemma-3.2} 
With the notations of Definition~\ref{definition-3.1}, the inequality
\begin{align*}
\mu_{\Gamma}\leq\mathrm{dist}_{\mathrm{hyp}}(\mathcal{F},\Gamma\mathcal{E}\setminus\mathcal
{F})
\end{align*}
holds.
\end{lem}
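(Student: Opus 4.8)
The plan is to realize the right-hand distance by an explicit minimizing configuration and then to transport it into $\mathcal{F}$ by a side-pairing transformation, so as to produce an admissible pair $(S,e)$ for the infimum defining $\mu_{\Gamma}$.

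First I would observe that $\Gamma\mathcal{E}$ is exactly the set of all elliptic fixed points of $\Gamma$ in $\mathbb{H}$, so that $\Gamma\mathcal{E}\cap\mathcal{F}=\mathcal{E}$, while the points of $\Gamma\mathcal{E}\setminus\mathcal{F}$ lie strictly outside the closed domain $\mathcal{F}$. Since $\Gamma\mathcal{E}$ is discrete and accumulates only at the boundary of $\mathbb{H}$, the infimum $d:=\mathrm{dist}_{\mathrm{hyp}}(\mathcal{F},\Gamma\mathcal{E}\setminus\mathcal{F})$ is attained: there exist $p\in\Gamma\mathcal{E}\setminus\mathcal{F}$ and a nearest point $q\in\mathcal{F}$ with $\mathrm{dist}_{\mathrm{hyp}}(p,q)=d$. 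Because $p$ lies outside the closed (convex) domain $\mathcal{F}$, the nearest point $q$ lies on $\partial\mathcal{F}$, hence on some side $S\in\mathcal{S}$; moreover the geodesic segment from $q$ to $p$ leaves $\mathcal{F}$ at $q$ and enters the tile $h\mathcal{F}$ adjacent to $\mathcal{F}$ across $S$ (with $h\in\Gamma$).

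Next I would use the side pairing $g:=h^{-1}$, which maps $h\mathcal{F}$ onto $\mathcal{F}$ and sends the shared side $S$ to another side $S^{*}:=gS\in\mathcal{S}$ of $\mathcal{F}$. Applying $g$ to the minimizing configuration yields $q^{*}:=gq\in S^{*}$ and $gp\in\Gamma\mathcal{E}$ with $\mathrm{dist}_{\mathrm{hyp}}(q^{*},gp)=d$, and now the image geodesic emanates from $q^{*}$ into the interior of $\mathcal{F}$. The crucial claim is that $gp\in\mathcal{F}$: if instead $gp\in\Gamma\mathcal{E}\setminus\mathcal{F}$, then minimality of $d$ forces $\mathrm{dist}_{\mathrm{hyp}}(gp,\mathcal{F})\geq d$, yet a point on the segment $[q^{*},gp]$ just beyond $q^{*}$ lies in $\mathcal{F}$ at distance strictly less than $d$ from $gp$, a contradiction. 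Hence $gp\in\Gamma\mathcal{E}\cap\mathcal{F}=\mathcal{E}$; set $e:=gp$. Finally $e\notin S^{*}$, for $e\in S^{*}$ would give $p=g^{-1}e\in g^{-1}S^{*}=S\subset\mathcal{F}$, against $p\notin\mathcal{F}$. Thus $(S^{*},e)$ is admissible in the infimum \eqref{formula-3.1}, and
\[
\mu_{\Gamma}\leq\mathrm{dist}_{\mathrm{hyp}}(S^{*},e)\leq\mathrm{dist}_{\mathrm{hyp}}(q^{*},e)=d=\mathrm{dist}_{\mathrm{hyp}}(\mathcal{F},\Gamma\mathcal{E}\setminus\mathcal{F}),
\]
which is the asserted inequality.

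The main obstacle is precisely the claim $gp\in\mathcal{F}$: one must argue that the nearest exterior elliptic point, after being reflected by the side pairing of the side carrying its foot, does not overshoot $\mathcal{F}$ but lands inside it. This is exactly where the minimality of $d$ enters, together with the fact that the reflected geodesic re-enters the interior of $\mathcal{F}$ at $q^{*}$. A secondary technical point is the degenerate case in which the foot $q$ is a vertex of $\mathcal{F}$ rather than a relative interior point of a side, so that the adjacent tile and the side pairing are not uniquely determined; this can be dispatched by choosing any side $S\in\mathcal{S}$ containing $q$ and its associated pairing, or by a limiting argument, and it does not affect the final inequality.
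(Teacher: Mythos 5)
Your proof is correct and takes essentially the same route as the paper: you transport the minimizing configuration into $\mathcal{F}$ by the deck transformation $g=h^{-1}$ attached to the tile the minimizing geodesic enters, and your direct contradiction establishing $gp\in\mathcal{F}$ (a point on $[q^{*},gp]$ just past $q^{*}$ lies in $\mathcal{F}$ at distance strictly less than $d$ from $gp$) is a streamlined version of the paper's two-tile argument showing that the elliptic point must lie in a translate of $\mathcal{F}$ bordering $\mathcal{F}$. The vertex-crossing degeneracy you flag at the end is glossed over in the paper's proof as well, so your brief treatment of it matches the intended level of rigor.
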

\begin{proof}
We may assume that we have
\begin{align*}
\mathrm{dist}_{\mathrm{hyp}}(\mathcal{F},\Gamma\mathcal{E}\setminus\mathcal{F})=\mathrm
{dist}_{\mathrm{hyp}}(z,e),
\end{align*}
where $z\in\partial\mathcal{F}$ and $e\in\Gamma\mathcal{E}\setminus\mathcal{F}$. We show that 
the elliptic fixed point $e$ lies in a translate of $\mathcal{F}$, which borders $\mathcal{F}$. To show 
this, we assume the contrary. So, let $\mathcal{F}_{1}=\gamma_{1}\mathcal{F}$ be a translate of 
$\mathcal{F}$, which borders $\mathcal{F}$, and let $\mathcal{F}_{2}=\gamma_{2}\mathcal{F}$ be 
a translate of $\mathcal{F}$, which borders $\mathcal{F}_{1}$, but not $\mathcal{F}$, and containing 
$e$; here $\gamma_{1},\gamma_{2}\in\Gamma$. The geodesic line joining $z$ with $e$ of hyperbolic 
length $\mathrm{dist}_{\mathrm{hyp}}(z,e)$, then leaves $\mathcal{F}_{1}$ and enters $\mathcal{F}_
{2}$ in a point $z_{1}$. We thus obtain the bound
\begin{align*}
\mathrm{dist}_{\mathrm{hyp}}(\gamma_{1}^{-1}z_{1},\gamma_{1}^{-1}e)=\mathrm{dist}_{\mathrm
{hyp}}(z_{1},e)<\mathrm{dist}_{\mathrm{hyp}}(z,e).
\end{align*}
However, since $\gamma_{1}^{-1}z_{1}\in\mathcal{F}$ and $\gamma_{1}^{-1}e\in\Gamma\mathcal
{E}\setminus\mathcal{F}$, this leads to a contradiction, and hence we can assume that $e\in\mathcal
{F}_{1}$.

To complete the proof, we realize that $z\in S_{1}$ for some $S_{1}\in\mathcal{S}$, which necessarily 
has the property $S_{1}\subset\mathcal{F}\cap\mathcal{F}_{1}$. This shows that $\gamma_{1}^{-1}
z\in S$ for some suitable other $S\in\mathcal{S}$ (namely, $S=\gamma_{1}^{-1}S_{1}$). Furthermore, 
since $\gamma_{1}^{-1}e\in\mathcal{E}$, but $\gamma_{1}^{-1}e\notin S$ (otherwise, we would have 
$e\in S_{1}\subset\mathcal{F}$, which is not the case), we obtain 
\begin{align*}
\inf_{\substack{S\in\mathcal{S}\\e\in\mathcal{E}\setminus S}}\mathrm{dist}_{\mathrm{hyp}}(S,e)\leq
\mathrm{dist}_{\mathrm{hyp}}(\gamma_{1}^{-1}z,\gamma_{1}^{-1}e)=\mathrm{dist}_{\mathrm{hyp}}
(z,e)=\mathrm{dist}_{\mathrm{hyp}}(\mathcal{F},\Gamma\mathcal{E}\setminus\mathcal{F}),
\end{align*}
which proves the claimed inequality.
\end{proof}

\begin{defn}
\label{definition-3.3}
Let $\Gamma_{\mathcal{E}}:=\Gamma_{e_{1}} \cup\ldots\cup\Gamma_{e_{n}}$ and $Y>0$. Then,
we define the quantity
\begin{align}
\label{formula-3.2}
\sigma_{Y}:=\inf_{\substack{z\in\mathcal{F}_{Y}\\\gamma\in\Gamma\setminus\Gamma_{\mathcal
{E}}}}\sigma(z,\gamma z),
\end{align}
which will be bounded in the next lemma.
\end{defn}

\begin{lem} 
\label{lemma-3.4} 
With the notations of Section~\ref{section-2}, $\mu_{\Gamma}$ given in Definition~\ref{definition-3.1}, 
and $\sigma_{Y}$ given in Definition~\ref{definition-3.3}, the inequalities
\begin{align*}
\sigma_{Y}\geq\min\bigg\{\frac{\cosh(\ell_{\Gamma})+1}{2},\,\sinh^{2}(\mu_{\Gamma})\,\sin^{2}\bigg
(\frac{\theta_{\Gamma}}{2}\bigg)+1,\,\frac{m_{Y}^{2}}{4}+1,\,\frac{1}{4M_{Y}^{2}}+1\bigg\}\geq 1
\end{align*}
hold.
\end{lem}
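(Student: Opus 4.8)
The plan is to bound the displacement $\sigma(z,\gamma z)$ uniformly from below for every fixed $z\in\mathcal{F}_Y$ and every $\gamma\in\Gamma\setminus\Gamma_{\mathcal{E}}$, separately according to the conjugacy type of $\gamma$ (hyperbolic, elliptic, or parabolic), and then to take the infimum. Since $\gamma\neq\mathrm{id}$ and $\sigma(z,w)=\cosh^{2}(\mathrm{dist}_{\mathrm{hyp}}(z,w)/2)\geq 1$ always holds, the trivial bound $\sigma_Y\geq 1$ is immediate and accounts for the final inequality in the statement; the real content is to show that each type contributes one of the four quantities inside the minimum, so that the infimum is at least their minimum.

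For hyperbolic $\gamma$ I would use that the displacement $\mathrm{dist}_{\mathrm{hyp}}(z,\gamma z)$ of a hyperbolic isometry is at least its translation length, which in turn is at least the systole $\ell_\Gamma$; combined with $\cosh^{2}(\ell_\Gamma/2)=(\cosh(\ell_\Gamma)+1)/2$ this yields the first term. For elliptic $\gamma$ with fixed point $e$ and rotation angle $\theta$, the standard identity $\sinh(\mathrm{dist}_{\mathrm{hyp}}(z,\gamma z)/2)=\sin(\theta/2)\sinh(\mathrm{dist}_{\mathrm{hyp}}(z,e))$ gives $\sigma(z,\gamma z)=1+\sin^{2}(\theta/2)\sinh^{2}(\mathrm{dist}_{\mathrm{hyp}}(z,e))$. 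Here the hypothesis $\gamma\notin\Gamma_{\mathcal{E}}$ is exactly what forces $e\in\Gamma\mathcal{E}\setminus\mathcal{F}$, so that Lemma~\ref{lemma-3.2} applies and gives $\mathrm{dist}_{\mathrm{hyp}}(z,e)\geq\mathrm{dist}_{\mathrm{hyp}}(\mathcal{F},\Gamma\mathcal{E}\setminus\mathcal{F})\geq\mu_\Gamma$; together with $\sin^{2}(\theta/2)\geq\sin^{2}(\theta_\Gamma/2)$ (valid because every rotation angle satisfies $\theta/2\leq\pi/2$ and $\theta\geq\theta_\Gamma$) this produces the second term.

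The parabolic case is where both cusp terms arise, and I would handle it in cusp-scaling coordinates. Writing $w=\sigma_j^{-1}z$, the bounds $m_Y\leq\mathrm{Im}(w)\leq M_Y$ hold for every $j$, and $\sigma$ is $\mathrm{PSL}_2(\mathbb{R})$-invariant. If $\gamma$ fixes the representative cusp $p_j$, then $\sigma_j^{-1}\gamma\sigma_j$ is a translation $w\mapsto w+n$ with $n\in\mathbb{Z}\setminus\{0\}$, so $\sigma(z,\gamma z)=1+n^{2}/(4\,\mathrm{Im}(w)^{2})\geq 1+1/(4M_Y^{2})$, the fourth term. If instead $\gamma$ does not fix $p_j$ (for instance a parabolic fixing a non-representative cusp), then $\gamma':=\sigma_j^{-1}\gamma\sigma_j=\big(\begin{smallmatrix}a&b\\c&d\end{smallmatrix}\big)$ has $c\neq 0$, hence $|c|\geq 1$; using $\mathrm{Im}(\gamma'w)=\mathrm{Im}(w)/|cw+d|^{2}$ and $|cw+d|^{2}\geq c^{2}\mathrm{Im}(w)^{2}\geq m_Y^{2}$, the identity $\sigma(z,\gamma z)=\big(|w-\overline{\gamma'w}|\big)^{2}/(4\,\mathrm{Im}(w)\mathrm{Im}(\gamma'w))$ reduces to a one-variable estimate in $|cw+d|^{2}$ that is bounded below by the third term.

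The main obstacle will be the parabolic/cusp case. First, bounding $\mathrm{Im}(w)$ from above by $M_Y$ requires that the height of the whole $\Gamma$-orbit of $z$ remain controlled in every cusp coordinate, which is where the hypothesis that $Y$ is large (in particular $Y\geq 16/\sqrt{15}$) enters, guaranteeing that the cuspidal neighborhoods are precisely invariant; one must verify this carefully rather than only using $M_Y$ as a bound on $\mathcal{F}_Y$ itself. Second, the lower bound $|c|\geq 1$ in the normalized coordinates, and the passage from $|cw+d|^{2}\geq m_Y^{2}$ to the clean constant in the third term, is the delicate computational point. The remaining cases (hyperbolic and elliptic) reduce to the displacement identities above and to Lemma~\ref{lemma-3.2}, so that the minimum of the four quantities is a valid lower bound for $\sigma_Y$, and each of them is manifestly $\geq 1$.
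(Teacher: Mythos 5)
Your overall architecture is the same as the paper's: a four-way case split (hyperbolic; elliptic with fixed point outside $\mathcal{F}$ via Lemma~\ref{lemma-3.2}; parabolic fixing a representative cusp; parabolic not fixing it), with the trivial bound $\sigma(z,w)\geq 1$ supplying the final inequality. Your hyperbolic and elliptic cases are complete and coincide with the paper's Cases 1 and 2, up to one small imprecision: a non-primitive elliptic power has rotation angle $2\pi m/n$, which can exceed $\pi$, so the inequality $\sin^{2}$(angle$/2)\geq\sin^{2}(\theta_{\Gamma}/2)$ follows not from monotonicity of $\sin$ but from $\sin(\pi m/n)\geq\sin(\pi/n)$ by concavity of the sine on $[0,\pi]$. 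Your treatment of a parabolic fixing $p_{j}$ is exactly the paper's Case 4.

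The genuine gap is in the remaining parabolic case (the paper's Case 3). Your proposed mechanism --- that $\sigma(z,\gamma z)$ ``reduces to a one-variable estimate in $|cw+d|^{2}$'' bounded below via $|cw+d|^{2}\geq c^{2}\,\mathrm{Im}(w)^{2}\geq m_{Y}^{2}$ --- does not deliver the third term. From control of $|cw+d|^{2}$ alone one gets at best $u(w,\gamma'w)\geq\big(|cw+d|^{2}-1\big)^{2}/\big(4|cw+d|^{2}\big)$ (via $|w-\gamma'w|\geq|\mathrm{Im}(w)-\mathrm{Im}(\gamma'w)|$), which vanishes when $|cw+d|^{2}=1$; since $m_{Y}\leq 1$ is entirely possible (for the modular group $m_{Y}=\sqrt{3}/2$), this is insufficient. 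What is actually needed is the parabolic structure of $\gamma'=\sigma_{j}^{-1}\gamma\sigma_{j}=\big(\begin{smallmatrix}a&b\\c&d\end{smallmatrix}\big)$: since $\mathrm{tr}(\gamma')=\pm 2$, the polynomial $cw^{2}+(d-a)w-b$ has a double root at the real fixed point $p'$, so $u(w,\gamma'w)=c^{2}|w-p'|^{4}/\big(4\,\mathrm{Im}(w)^{2}\big)\geq c^{2}\,\mathrm{Im}(w)^{2}/4\geq m_{Y}^{2}/4$, using Shimizu's bound $|c|\geq 1$ (valid here because $\gamma$ does not fix $p_{j}$, so $c\neq 0$). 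The paper reaches the same estimate by conjugating $\gamma$ to the translation $\big(\begin{smallmatrix}1&n\\0&1\end{smallmatrix}\big)$ by an auxiliary $\delta\in\sigma_{j}^{-1}\Gamma\sigma_{j}$ and applying Shimizu to $\delta$; this is equivalent, since for the bottom row $(c,d)$ of $\delta$ one has $|cz'+d|^{2}=c^{2}|z'-p'|^{2}$ with $p'$ the parabolic fixed point. Separately, the ``main obstacle'' you flag --- precise invariance of the cuspidal neighborhoods and control of orbit heights, supposedly requiring $Y\geq 16/\sqrt{15}$ --- is a non-issue for this lemma: every estimate involves only $w=\sigma_{j}^{-1}z$ for the given $z\in\mathcal{F}_{Y}$, never the heights of other orbit points, and the inequalities $m_{Y}\leq\mathrm{Im}(\sigma_{j}^{-1}z)\leq M_{Y}$ hold for all $j$ by the very definition of $m_{Y}$ and $M_{Y}$; no largeness hypothesis on $Y$ enters the proof.
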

\begin{proof}
Letting $z\in\mathcal{F}_{Y}$ and $\gamma\in\Gamma\setminus\Gamma_{\mathcal{E}}$, we need 
to distinguish and investigate the following four cases:

\emph{Case 1.} Let $\gamma$ be a hyperbolic element. Then we obviously have that $\mathrm
{dist}_{\mathrm{hyp}}(z,\gamma z)\geq\ell_{\Gamma}$, from which we conclude
\begin{align*}
\sigma(z,\gamma z)\geq\cosh^{2}\bigg(\frac{\ell_{\Gamma}}{2}\bigg)=\frac{\cosh(\ell_{\Gamma})+
1}{2}\,.
\end{align*}

\emph{Case 2.} Let $\gamma$ be an elliptic element associated to an elliptic fixed point $e\notin
\mathcal{F}$. Denoting by $\theta$ the rotation angle of the corresponding primitive elliptic element, 
we obtain from~\cite[p.~174, Theorem~7.35.1]{Beardon}
\begin{align*}
&\sinh\bigg(\frac{\mathrm{dist}_{\mathrm{hyp}}(z,\gamma z)}{2}\bigg)=\sinh\big(\mathrm{dist}_
{\mathrm{hyp}}(z,e)\big)\sin\bigg(\frac{\theta}{2}\bigg) \\
&\quad\geq\sinh\big(\mathrm{dist}_{\mathrm{hyp}}(\mathcal{F},\Gamma\mathcal{E}\setminus
\mathcal{F})\big)\sin\bigg(\frac{\theta_{\Gamma}}{2}\bigg)\geq\sinh(\mu_{\Gamma})\sin\bigg(\frac
{\theta_{\Gamma}}{2}\bigg),
\end{align*}
where the last inequality is justified by Lemma~\ref{lemma-3.2}. From this we immediately get
\begin{align*}
\sigma(z,\gamma z)=\sinh^{2}\bigg(\frac{\mathrm{dist}_{\mathrm{hyp}}(z,\gamma z)}{2}\bigg)+1
\geq\sinh^{2}(\mu_{\Gamma})\sin^{2}\bigg(\frac{\theta_{\Gamma}}{2}\bigg)+1.
\end{align*}

\emph{Case 3.} Let $\gamma$ be a parabolic element associated to a cusp $p\notin\mathcal{P}$. 
Then, we have $\gamma\in\Gamma_{p}$ and there exists a $\gamma'\in\Gamma$ such that $p=
\gamma'p_{j}$ for some $j\in\{1,\ldots,h\}$. For the stabilizer subgroup $\Gamma_{p}$, we then 
find
\begin{align*}
\gamma'^{-1}\Gamma_{p}\gamma'=\Gamma_{p_{j}},\quad\text{hence}\quad\sigma_{j}^{-1}
\gamma'^{-1}\Gamma_{p}\gamma'\sigma_{j}=\sigma_{j}^{-1}\Gamma_{p_{j}}\sigma_{j}=\bigg
\langle\begin{pmatrix}1&1\\0&1\end{pmatrix}\bigg\rangle\,.
\end{align*}
Therefore, by setting 
\begin{align*}
\delta:=\sigma_{j}^{-1}\gamma'^{-1}\sigma_{j}=\begin{pmatrix}a&b\\c&d\end{pmatrix}\in\sigma_
{j}^{-1}\Gamma\sigma_{j},
\end{align*}
we find
\begin{align*}
\delta\sigma_{j}^{-1}\gamma\sigma_{j}\delta^{-1}=\begin{pmatrix}1&n\\0&1\end{pmatrix} 
\end{align*}
with some $n\in\mathbb{Z}$. Letting $z':=\sigma^{-1}_{j}z$, we now compute
\begin{align*}
\sigma(z,\gamma z)&=\cosh^{2}\bigg(\frac{\mathrm{dist}_{\mathrm{hyp}}(z,\gamma z)}{2}\bigg)=
\cosh^{2}\bigg(\frac{\mathrm{dist}_{\mathrm{hyp}}(\delta\sigma_{j}^{-1}z,\delta\sigma_{j}^{-1}
\gamma z)}{2}\bigg) \\[2mm]
&=\cosh^{2}\bigg(\frac{\mathrm{dist}_{\mathrm{hyp}}(\delta z',\delta\sigma_{j}^{-1}\gamma\sigma_
{j}\delta^{-1}\delta z')}{2}\bigg)=\cosh^{2}\bigg(\frac{\mathrm{dist}_{\mathrm{hyp}}(\delta z',\delta 
z'+n)}{2}\bigg) \\[2mm]
&=\frac{\vert\delta z'-\delta\bar{z}'-n\vert^{2}}{4\,\mathrm{Im}(\delta z')^{2}}=\frac{\frac{4\,\mathrm
{Im}(z')^{2}}{\vert cz'+d\vert^{4}}+n^{2}}{4\,\mathrm{Im}(\delta z')^{2}}=\frac{n^{2}\,\vert cz'+d\vert^
{4}}{4\,\mathrm{Im}(z')^{2}}+1.
\end{align*}
Taking into account that $c\neq 0$ (since otherwise we would have $p\in\mathcal{P}$), Shimizu's
lemma gives the bound $\vert c\vert\geq 1$. Thus, the latter quantity can be bounded as
\begin{align*}
\sigma(z,\gamma z)&\geq\frac{\big((c\,\mathrm{Re}(z')+d)^{2}+c^{2}\,\mathrm{Im}(z')^{2}\big)^{2}}
{4\,\mathrm{Im}(z')^{2}}+1\geq\frac{c^{4}\,\mathrm{Im}(z')^{2}}{4}+1\geq\frac{m_{Y}^{2}}{4}+1.
\end{align*}

\emph{Case 4.} Let $\gamma$ be a parabolic element associated to a cusp $p_{j}\in\mathcal{P}$. 
By proceding as in the previous case with $\gamma'=\mathrm{id}$ and hence $\delta=\mathrm{id}$, 
we have
\begin{align*}
\sigma_{j}^{-1}\gamma\sigma_{j}=\begin{pmatrix}1&n\\0&1\end{pmatrix} 
\end{align*}
with some $n\in\mathbb{Z}$. Letting $z':=\sigma_{j}^{-1}z$, we compute as in the previous case
\begin{align*}
\sigma(z,\gamma z)=\cosh^{2}\bigg(\frac{\mathrm{dist}_{\mathrm{hyp}}(z,\gamma z)}{2}\bigg)= 
\frac{\vert z'-\bar{z}'-n\vert^{2}}{4\,\mathrm{Im}(z')^{2}}=\frac{n^{2}}{4\,\mathrm{Im}(z')^{2}}+1\geq
\frac{1}{4M_{Y}^{2}}+1.
\end{align*}
This completes the proof of the lemma, observing that the second claimed inequality is clear.
\end{proof}

\subsection{Upper bounds for Poincar\'e series in the compact domain $\mathcal{F}_{Y}$}
\label{subsection-3.2}
In this subsection, we will give an upper bound for the Poincar\'e series $P_{k,\varepsilon}^{\Gamma}
(z)$ for $k\in\mathbb{N}_{\geq 2}$ and $\varepsilon>0$, when $z$ is ranging through the compact 
domain $\mathcal{F}_{Y}$. We start with the following definition.

\begin{defn}
\label{definition-3.5}
Recalling that $\mathrm{diam}_{\mathrm{hyp}}(\mathcal{F}_{Y})$ denotes the hyperbolic diameter 
of $\mathcal{F}_{Y}$, we define the quantity
\begin{align}
\label{formula-3.3}
B_{Y}:=e^{\mathrm{diam}_{\mathrm{hyp}}(\mathcal{F}_{Y})/2}/\mathrm{vol}_{\mathrm{hyp}}(\mathcal
{F}_{Y}),
\end{align}
which will be useful in the next lemma.
\end{defn}

\begin{lem}
\label{lemma-3.6}
For $z\in\mathcal{F}_{Y}$ and $r\geq 1$, let $\pi_{\mathcal{F}_{Y}}(z,r)$ denote the counting 
function
\begin{align*}
\pi_{\mathcal{F}_{Y}}(z,r):=\#\big\{\gamma\in\Gamma\,\vert\,\sigma(z,\gamma z)\leq r\big\}.
\end{align*}
Then, with $B_{Y}$ given in Definition~\ref{definition-3.5}, the upper bound
\begin{align*}
\pi_{\mathcal{F}_{Y}}(z,r)\leq 4\pi\,B_{Y}r
\end{align*}
holds.
\end{lem}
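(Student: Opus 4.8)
The plan is to bound the counting function $\pi_{\mathcal{F}_Y}(z,r)$ by a volume argument: to each $\gamma$ contributing to the count we associate the translated domain $\gamma\mathcal{F}$, observe these translates are pairwise disjoint (up to boundary) and all contained in a single hyperbolic ball centered at $z$, and then compare cardinality to the ratio of ball-volume to fundamental-domain-volume. First I would fix $z\in\mathcal{F}_Y$ and $r\geq 1$. For each $\gamma$ with $\sigma(z,\gamma z)\leq r$, I convert the displacement bound into a distance bound via formula~\eqref{formula-2.1}: since $\sigma(z,\gamma z)=\cosh^2(\mathrm{dist}_{\mathrm{hyp}}(z,\gamma z)/2)$, the condition $\sigma(z,\gamma z)\leq r$ is equivalent to $\mathrm{dist}_{\mathrm{hyp}}(z,\gamma z)\leq 2\,\mathrm{arccosh}(\sqrt{r})=:R$.

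Next I would set up the packing argument. Each such $\gamma\mathcal{F}$ is a copy of the fundamental domain, and distinct $\gamma$ give essentially disjoint translates (overlapping only on boundaries, which have measure zero), so their hyperbolic volumes add. I need every such $\gamma\mathcal{F}$ to sit inside one fixed ball around $z$. The point $\gamma z$ lies in $\gamma\mathcal{F}$ (since $z\in\mathcal{F}$), and any other point $w\in\gamma\mathcal{F}$ satisfies $\mathrm{dist}_{\mathrm{hyp}}(w,\gamma z)\leq\mathrm{diam}_{\mathrm{hyp}}(\gamma\mathcal{F})=\mathrm{diam}_{\mathrm{hyp}}(\mathcal{F}_Y)=:D$ (here I use that the relevant diameter is that of $\mathcal{F}_Y$, the compact core, as in Definition~\ref{definition-3.5}). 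By the triangle inequality,
\begin{align*}
\mathrm{dist}_{\mathrm{hyp}}(w,z)\leq\mathrm{dist}_{\mathrm{hyp}}(w,\gamma z)+\mathrm{dist}_{\mathrm{hyp}}(\gamma z,z)\leq D+R.
\end{align*}
Hence $\bigcup_{\gamma}\gamma\mathcal{F}\subset B(z,D+R)$, the hyperbolic ball of radius $D+R$ centered at $z$.

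Counting volumes then gives $\pi_{\mathcal{F}_Y}(z,r)\cdot\mathrm{vol}_{\mathrm{hyp}}(\mathcal{F}_Y)\leq\mathrm{vol}_{\mathrm{hyp}}(B(z,D+R))$. The hyperbolic area of a disk of radius $\rho$ is $4\pi\sinh^2(\rho/2)$, so I would substitute $\rho=D+R$ and estimate $\sinh^2((D+R)/2)\leq\tfrac14 e^{D+R}=\tfrac14 e^{D}e^{R}$. Using $e^{R}=e^{2\,\mathrm{arccosh}(\sqrt r)}=(\sqrt r+\sqrt{r-1})^2\leq(2\sqrt r)^2=4r$ (valid for $r\geq 1$), and recalling $B_Y=e^{D/2}/\mathrm{vol}_{\mathrm{hyp}}(\mathcal{F}_Y)$, a clean arrangement yields $\pi_{\mathcal{F}_Y}(z,r)\leq 4\pi B_Y r$. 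The main obstacle is getting the exponential constants to collapse exactly into the factor $B_Y=e^{D/2}/\mathrm{vol}_{\mathrm{hyp}}(\mathcal{F}_Y)$: the bookkeeping forces the diameter to enter as $e^{D/2}$ rather than $e^{D}$, which suggests the sharp route uses the half-diameter more carefully — for instance by centering balls at a midpoint of $\mathcal{F}_Y$ so that each translate lies within radius $D/2+R/2$ of that center, or by a more economical use of $\mathrm{arccosh}$. I would verify this constant-chasing step explicitly, since it is where the precise form of $B_Y$ is pinned down; the geometric packing itself is routine.
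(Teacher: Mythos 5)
Your approach is the same as the paper's --- pack disjoint translates of the compact core into one large ball and compare volumes --- and the routine part is executed correctly, but, as you yourself flag, the argument as written does not prove the stated inequality: it yields $\pi_{\mathcal{F}_{Y}}(z,r)\leq 4\pi\,\big(e^{D}/\mathrm{vol}_{\mathrm{hyp}}(\mathcal{F}_{Y})\big)\,r$ with $D=\mathrm{diam}_{\mathrm{hyp}}(\mathcal{F}_{Y})$, which is weaker than the claimed $4\pi\,B_{Y}r$ by the factor $e^{D/2}$. The two economies you speculate about at the end are exactly what the paper uses. First, the large ball is centered not at $z$ but at a point $z_{0}\in\mathcal{F}_{Y}$ chosen so that the disk $B_{z_{0}}(r_{0})$ of radius $r_{0}=D/2$ already covers $\mathcal{F}_{Y}$; for the counted $\gamma$ the translates $\gamma\mathcal{F}_{Y}$ are then placed inside $B_{z_{0}}(r_{0}+\rho)$, where $\rho$ is defined by $r=\cosh^{2}(\rho/2)$, so only the covering radius $D/2$ is charged rather than the full diameter. (Note the radius is $D/2+\rho$, not $D/2+\rho/2$ as you guessed: the displacement cannot be halved, since $\gamma z$ itself may be at distance $\rho$ from $z$.) Second, instead of passing through $e^{R}=(\sqrt{r}+\sqrt{r-1})^{2}\leq 4r$, which wastes a factor of $4$, the paper keeps $r=\cosh^{2}(\rho/2)$ exact and estimates
\begin{align*}
\mathrm{vol}_{\mathrm{hyp}}\big(B_{z_{0}}(r_{0}+\rho)\big)=4\pi\sinh^{2}\bigg(\frac{r_{0}+\rho}{2}\bigg)
\leq 4\pi\cosh^{2}\bigg(\frac{r_{0}+\rho}{2}\bigg)\leq 4\pi\,e^{r_{0}}\cosh^{2}\bigg(\frac{\rho}{2}\bigg)
=4\pi\,e^{r_{0}}\,r,
\end{align*}
so that dividing by $\mathrm{vol}_{\mathrm{hyp}}(\mathcal{F}_{Y})$ gives exactly $4\pi\,B_{Y}r$.

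Two smaller points. You pack with the translates $\gamma\mathcal{F}$ and assert $\mathrm{diam}_{\mathrm{hyp}}(\gamma\mathcal{F})=\mathrm{diam}_{\mathrm{hyp}}(\mathcal{F}_{Y})$; this is false whenever $\Gamma$ has cusps, since then $\mathcal{F}$ has infinite hyperbolic diameter. The packing must be carried out with $\gamma\mathcal{F}_{Y}$: these contain $\gamma z$ (because $z\in\mathcal{F}_{Y}$), have diameter $D$ and volume $\mathrm{vol}_{\mathrm{hyp}}(\mathcal{F}_{Y})$, and are pairwise disjoint up to sets of measure zero because they lie in the distinct copies $\gamma\mathcal{F}$. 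With that correction your argument is a complete proof of the lemma with $e^{D}$ in place of $e^{D/2}$, and what remains --- the passage to the covering center $z_{0}$ --- is genuinely the step where the precise form of $B_{Y}$ is pinned down, so your caution there was well placed: even in the paper this step is stated tersely, and both the existence of a covering disk of radius exactly $D/2$ and the containment $\gamma\mathcal{F}_{Y}\subset B_{z_{0}}(r_{0}+\rho)$ for arbitrary $z\in\mathcal{F}_{Y}$ (only $\mathrm{dist}_{\mathrm{hyp}}(z,\gamma z)\leq\rho$ is known, not $\mathrm{dist}_{\mathrm{hyp}}(z_{0},\gamma z_{0})\leq\rho$) merit a line of justification before the constant can be considered fully pinned down.
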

\begin{proof}
By choosing $\rho\geq 0$ such that $r=\cosh^{2}(\rho/2)$, we have
\begin{align*}
\pi_{\mathcal{F}_{Y}}(z,r)=\#\big\{\gamma\in\Gamma\,\vert\,\mathrm{dist}_{\mathrm{hyp}}(z,\gamma 
z)\leq\rho\big\}.
\end{align*}
Fix now $z_{0}\in\mathcal{F}_{Y}$ such that the disk $B_{z_{0}}(r_{0})$ of hyperbolic radius $r_{0}:=
\mathrm{diam}_{\mathrm{hyp}}(\mathcal{F}_{Y})/2$ centered at $z_{0}$ covers $\mathcal{F}_{Y}$. 
Then, as $\gamma$ runs through the set $\{\gamma\in\Gamma\,\vert\,\mathrm{dist}_{\mathrm{hyp}}
(z,\gamma z)\leq\rho\}$, the translates $\gamma\mathcal{F}_{Y}$ disjointly cover parts of the disk 
$B_{z_{0}}(r_{0}+\rho)$ of hyperbolic radius $r_{0}+\rho$ centered at $z_{0}$. This leads to the 
upper bound
\begin{align*}
\pi_{\mathcal{F}_{Y}}(z,r)\cdot\mathrm{vol}_{\mathrm{hyp}}(\mathcal{F}_{Y})&\leq\mathrm{vol}_
{\mathrm{hyp}}\big(B_{z_{0}}(r_{0}+\rho)\big)=4\pi\sinh^{2}\bigg(\frac{r_{0}+\rho}{2}\bigg) \\
&\leq 4\pi\cosh^{2}\bigg(\frac{r_{0}+\rho}{2}\bigg)\leq 4\pi\,e^{r_{0}}\cosh^{2}\bigg(\frac{\rho}{2}
\bigg).
\end{align*}
This immediately implies the claimed upper bound recalling that $r_{0}=\mathrm{diam}_{\mathrm
{hyp}}(\mathcal{F}_{Y})/2$ and $r=\cosh^{2}(\rho/2)$.
\end{proof}

\begin{lem}
\label{lemma-3.7}
Let $Y>0$ and $\delta>1$. Then, with $B_{Y}$ given in Definition~\ref{definition-3.5}, the upper 
bound
\begin{align*}
\sum\limits_{\gamma\in\Gamma}\sigma(z,\gamma z)^{-\delta}\leq 4\pi\,B_{Y}\frac{\delta}{\delta-1}
\end{align*}
holds for $z\in\mathcal{F}_{Y}$.
\end{lem}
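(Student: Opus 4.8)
The plan is to reduce the sum to an integral against the counting function $\pi_{\mathcal{F}_{Y}}(z,r)$ of Lemma~\ref{lemma-3.6} and then feed in its linear bound. The point of departure is the observation that $\sigma(z,\gamma z)=\cosh^{2}(\mathrm{dist}_{\mathrm{hyp}}(z,\gamma z)/2)\geq 1$ for every $\gamma\in\Gamma$; consequently each summand is at most $1$, and, more usefully, the counting function satisfies $\pi_{\mathcal{F}_{Y}}(z,r)=0$ for $r<1$, so the integral we are about to set up naturally starts at $r=1$.

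First I would rewrite each individual term by means of the elementary identity
\[
\sigma(z,\gamma z)^{-\delta}=\delta\int_{\sigma(z,\gamma z)}^{\infty}r^{-\delta-1}\,\mathrm{d}r,
\]
valid for any $\delta>0$. Summing over $\gamma\in\Gamma$ and interchanging summation and integration (all terms are nonnegative, so this is justified by Tonelli's theorem), the indicator $\mathbf{1}_{\{\sigma(z,\gamma z)\leq r\}}$ reassembles into the counting function, yielding
\[
\sum_{\gamma\in\Gamma}\sigma(z,\gamma z)^{-\delta}=\delta\int_{1}^{\infty}r^{-\delta-1}\,\pi_{\mathcal{F}_{Y}}(z,r)\,\mathrm{d}r,
\]
where the lower limit equals $1$ precisely because $\pi_{\mathcal{F}_{Y}}(z,r)$ vanishes for $r<1$.

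Next I would substitute the estimate $\pi_{\mathcal{F}_{Y}}(z,r)\leq 4\pi\,B_{Y}r$ of Lemma~\ref{lemma-3.6}, which is available on the entire range $r\geq 1$ over which we integrate. This gives
\[
\sum_{\gamma\in\Gamma}\sigma(z,\gamma z)^{-\delta}\leq 4\pi\,B_{Y}\,\delta\int_{1}^{\infty}r^{-\delta}\,\mathrm{d}r=4\pi\,B_{Y}\,\delta\cdot\frac{1}{\delta-1},
\]
the last integral converging exactly because $\delta>1$ and evaluating to $1/(\delta-1)$. This is the asserted bound.

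The argument is a standard layer-cake (Abel-summation) computation, so there is no genuine obstacle; the only points demanding a moment's care are the justification of the interchange of sum and integral, which rests on nonnegativity via Tonelli and which simultaneously establishes finiteness—hence convergence—of the original sum, and the bookkeeping at the endpoints: the integration begins at $r=1$, which is exactly where the hypothesis $r\geq 1$ of Lemma~\ref{lemma-3.6} takes effect, while the requirement $\delta>1$ is what secures convergence at infinity.
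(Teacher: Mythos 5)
Your proof is correct and follows essentially the same route as the paper: both reduce the sum to an integral of the counting function $\pi_{\mathcal{F}_{Y}}(z,r)$ against $r^{-\delta-1}$ and then apply the linear bound of Lemma~\ref{lemma-3.6}, with convergence secured by $\delta>1$. The only difference is cosmetic: the paper organizes the Abel summation as a truncated Riemann--Stieltjes integral with integration by parts and a limit $R\to\infty$, while your layer-cake identity $\sigma^{-\delta}=\delta\int_{\sigma}^{\infty}r^{-\delta-1}\,\mathrm{d}r$ combined with Tonelli reaches the same expression directly and dispenses with the boundary terms.
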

\begin{proof}
Letting $R>1$ and rewriting the Poincar\'e series under consideration as a Stieltjes integral using 
the counting function $\pi_{\mathcal{F}_{Y}}(z,r)$ from Lemma~\ref{lemma-3.6}, we get after
integrating by parts
\begin{align*}
\sum\limits_{\substack{\gamma\in\Gamma\\\sigma(z,\gamma z)\leq R}}\sigma(z,\gamma z)^{-\delta}
=\int\limits_{1}^{R}r^{-\delta}\,\mathrm{d}\pi_{\mathcal{F}_{Y}}(z,r)=r^{-\delta}\,\pi_{\mathcal{F}_{Y}}
(z,r)\bigg\vert_{1}^{R}+\delta\int\limits_{1}^{R}r^{-\delta-1}\,\pi_{\mathcal{F}_{Y}}(z,r)\,\mathrm{d}r.
\end{align*}
Using Lemma~\ref{lemma-3.6}, we find upon setting $\widetilde{B}_{Y}:=4\pi B_{Y}$ the bound
\begin{align*}
\sum\limits_{\substack{\gamma\in\Gamma\\\sigma(z,\gamma z)\leq R}}\sigma(z,\gamma z)^{-\delta}
\leq R^{-\delta}\widetilde{B}_{Y}\,R+\delta\int\limits_{1}^{R}r^{-\delta-1}\widetilde{B}_{Y}\,r\,\mathrm
{d}r=\widetilde{B}_{Y}R^{-\delta+1}+\widetilde{B}_{Y}\,\delta\bigg(\frac{R^{-\delta+1}}{-\delta+1}-
\frac{1}{-\delta+1}\bigg).
\end{align*}
Letting $R\rightarrow\infty$, we thus obtain the upper bound
\begin{align*}
\sum\limits_{\gamma\in\Gamma}\sigma(z,\gamma z)^{-\delta}\leq 4\pi\,B_{Y}\frac{\delta}{\delta-1}\,,
\end{align*}
as claimed.
\end{proof}

\begin{prop}
\label{proposition-3.8}
Let $k\in\mathbb{N}_{\geq 2}$, $\varepsilon>0$, and $Y>0$. Then, with $\sigma_{Y}$ given in
Definition~\ref{definition-3.3} and $B_{Y}$ given in Definition~\ref{definition-3.5}, the upper bound
\begin{align*}
\sum\limits_{\gamma\in\Gamma\setminus\{\mathrm{id}\}}\sigma(z,\gamma z)^{-(k+\varepsilon)}\leq
4\pi\,\frac{2+\varepsilon}{1+\varepsilon}\,B_{Y}\,\sigma_{Y}^{-(k-2)}+\sum\limits_{e_{j}\in\mathcal{E}}
(n_{j}-1)
\end{align*}
holds for $z\in\mathcal{F}_{Y}$.
\end{prop}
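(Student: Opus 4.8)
The plan is to decompose the sum over $\Gamma\setminus\{\mathrm{id}\}$ according to whether $\gamma$ stabilizes an elliptic point of $\mathcal{F}$, and to estimate the two resulting pieces by entirely different means. Concretely, writing $\Gamma_{\mathcal{E}}=\Gamma_{e_{1}}\cup\ldots\cup\Gamma_{e_{n}}$ as in Definition~\ref{definition-3.3}, I would split
\begin{align*}
\sum_{\gamma\in\Gamma\setminus\{\mathrm{id}\}}\sigma(z,\gamma z)^{-(k+\varepsilon)}=\sum_{\gamma\in\Gamma_{\mathcal{E}}\setminus\{\mathrm{id}\}}\sigma(z,\gamma z)^{-(k+\varepsilon)}+\sum_{\gamma\in\Gamma\setminus\Gamma_{\mathcal{E}}}\sigma(z,\gamma z)^{-(k+\varepsilon)},
\end{align*}
which is harmless since every summand is nonnegative.

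For the elliptic part I would use only the trivial bound $\sigma(z,w)\geq 1$, valid because $\sigma(z,w)=\cosh^{2}(\mathrm{dist}_{\mathrm{hyp}}(z,w)/2)\geq 1$; hence each summand is at most $1$. The stabilizer $\Gamma_{e_{j}}$ of the elliptic fixed point $e_{j}$ is cyclic of order $n_{j}$, and since a nontrivial elliptic element fixes a unique point of $\mathbb{H}$, the punctured stabilizers $\Gamma_{e_{j}}\setminus\{\mathrm{id}\}$ are pairwise disjoint. Thus $\Gamma_{\mathcal{E}}\setminus\{\mathrm{id}\}$ has exactly $\sum_{e_{j}\in\mathcal{E}}(n_{j}-1)$ elements, and the first sum is bounded by $\sum_{e_{j}\in\mathcal{E}}(n_{j}-1)$.

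For the non-elliptic part I would exploit the hypothesis $k\geq 2$ by factoring $\sigma(z,\gamma z)^{-(k+\varepsilon)}=\sigma(z,\gamma z)^{-(k-2)}\cdot\sigma(z,\gamma z)^{-(2+\varepsilon)}$. By Lemma~\ref{lemma-3.4} we have $\sigma(z,\gamma z)\geq\sigma_{Y}\geq 1$ for all $z\in\mathcal{F}_{Y}$ and $\gamma\in\Gamma\setminus\Gamma_{\mathcal{E}}$; as the exponent $-(k-2)$ is nonpositive, this gives the uniform bound $\sigma(z,\gamma z)^{-(k-2)}\leq\sigma_{Y}^{-(k-2)}$. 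Pulling this factor out, enlarging the remaining sum to all of $\Gamma$ (again legitimate by nonnegativity), and applying Lemma~\ref{lemma-3.7} with $\delta=2+\varepsilon>1$, I would obtain
\begin{align*}
\sum_{\gamma\in\Gamma\setminus\Gamma_{\mathcal{E}}}\sigma(z,\gamma z)^{-(k+\varepsilon)}\leq\sigma_{Y}^{-(k-2)}\sum_{\gamma\in\Gamma}\sigma(z,\gamma z)^{-(2+\varepsilon)}\leq\sigma_{Y}^{-(k-2)}\cdot 4\pi\,B_{Y}\frac{2+\varepsilon}{1+\varepsilon}.
\end{align*}
Adding the two estimates yields precisely the claimed bound.

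The analytic content has already been packaged into Lemma~\ref{lemma-3.4} (the uniform lower bound $\sigma_{Y}$ on the displacement away from the elliptic stabilizers) and Lemma~\ref{lemma-3.7} (the convergent Poincar\'e-series estimate), so each individual step here is short. The only point demanding genuine care is the bookkeeping in the elliptic part: one must check that $\Gamma_{\mathcal{E}}\setminus\{\mathrm{id}\}$ is counted without duplication, and recognize that the crude estimate $\sigma\geq 1$ is forced there precisely because those finitely many elliptic elements are \emph{not} controlled by $\sigma_{Y}$. I expect this case distinction, together with verifying that the factorization respects the sign of $k-2$ (which is exactly where the assumption $k\geq 2$ is used), to be the main thing to get right, rather than any estimate of real difficulty.
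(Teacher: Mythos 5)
Your proposal is correct and follows essentially the same route as the paper's own proof: the identical decomposition into $\Gamma\setminus\Gamma_{\mathcal{E}}$ and $\Gamma_{\mathcal{E}}\setminus\{\mathrm{id}\}$, the factorization $\sigma(z,\gamma z)^{-(k+\varepsilon)}=\sigma(z,\gamma z)^{-(k-2)}\,\sigma(z,\gamma z)^{-(2+\varepsilon)}$ combined with Lemma~\ref{lemma-3.4} and Lemma~\ref{lemma-3.7} for the first piece, and the trivial bound $\sigma\geq 1$ with the count $\sum_{e_{j}\in\mathcal{E}}(n_{j}-1)$ for the second. Your added remarks on the disjointness of the punctured stabilizers and on enlarging the sum to all of $\Gamma$ before invoking Lemma~\ref{lemma-3.7} are sound refinements of bookkeeping the paper leaves implicit, but they do not change the argument.
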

\begin{proof}
Recalling that $\Gamma_{\mathcal{E}}=\Gamma_{e_{1}} \cup\ldots\cup\Gamma_{e_{n}}$, we have 
the decomposition
\begin{align*}
\sum\limits_{\gamma\in\Gamma\setminus\{\mathrm{id}\}}\sigma(z,\gamma z)^{-(k+\varepsilon)}=\sum
\limits_{\gamma\in\Gamma\setminus\Gamma_{\mathcal{E}}}\sigma(z,\gamma z)^{-(k+\varepsilon)}+
\sum\limits_{\gamma\in\Gamma_{\mathcal{E}}\setminus\{\mathrm{id}\}}\sigma(z,\gamma z)^{-(k+
\varepsilon)}.
\end{align*}
Since $\sigma(z,\gamma z)\geq\sigma_{Y}\geq 1$ for $z\in\mathcal{F}_{Y}$ and $\gamma\in\Gamma
\setminus\Gamma_{\mathcal{E}}$, and since $k\in\mathbb{N}_{\geq 2}$, Lemma~\ref{lemma-3.7} 
allows to bound the first summand as
\begin{align*}
&\sum\limits_{\gamma\in\Gamma\setminus\Gamma_{\mathcal{E}}}\sigma(z,\gamma z)^{-(k+\varepsilon)}
=\sum\limits_{\gamma\in\Gamma\setminus\Gamma_{\mathcal{E}}}\sigma(z,\gamma z)^{-(k-2)}\,\sigma
(z,\gamma z)^{-(2+\varepsilon)} \\[2mm]
&\qquad\leq\sigma_{Y}^{-(k-2)}\sum\limits_{\gamma\in\Gamma\setminus\Gamma_{\mathcal{E}}}\sigma
(z,\gamma z)^{-(2+\varepsilon)}\leq\sigma_{Y}^{-(k-2)}\,4\pi\,B_{Y}\,\frac{2+\varepsilon}{1+\varepsilon}\,.
\end{align*}
Since $\sigma(z,\gamma z)\geq 1$ for $z\in\mathcal{F}_{Y}$ and $\gamma\in\Gamma_{\mathcal{E}}$,
we easily estimate the second summand as
\begin{align*}
&\sum\limits_{\gamma\in\Gamma_{\mathcal{E}}\setminus\{\mathrm{id}\}}\sigma(z,\gamma z)^{-(k+
\varepsilon)}\leq\sum\limits_{e_{j}\in\mathcal{E}}(n_{j}-1).
\end{align*}
This completes the proof of the proposition.
\end{proof}

\section{Effective estimates in the cuspidal neighborhoods $\mathcal{F}^{Y}_{j}$}
\label{section-4}
The main goal of this section is to give an upper bound for the Poincar\' e series $P_{k,\varepsilon}^
{\Gamma}(z)$ for $k\in\mathbb{N}_{\geq 2}$ and $\varepsilon>0$, when $z$ ranges through the 
cuspidal neighborhoods $\mathcal{F}^{Y}_{j}$. It will turn out that we can restrict ourselves to the
case when $Y<k/(2\pi)$.

\subsection{A lemma of Faddeev}
\label{subsection-4.1}
In this subsection, we first show that bounding $S_{2k}^{\Gamma}(z)$ in the cuspidal neighborhoods 
$\mathcal{F}^{Y}_{j}$ can be reduced to estimating this quantity in suitable compact sets depending
on $Y\geq k/(2\pi)$ or $Y<k/(2\pi)$. Then, we will prove a lemma due to L.D.~Faddeev~\cite{Faddeev}, 
which will be crucial for the next subsection.

\begin{lem} 
\label{lemma-4.1}
Let $k\in\mathbb{N}_{\geq 1}$. Then, for $j=1,\ldots,h$, we have the following two statements:
\begin{itemize}
\item[{\rm(1)}]
For $Y\geq k/(2\pi)$, the inequality
\begin{align*}
\sup_{z\in\mathcal{F}^{Y}_{j}}S_{2k}^{\Gamma}(z)\leq\sup_{z\in\mathcal{F}_{Y}}S_{2k}^{\Gamma}(z)
\end{align*}
holds.
\item[{\rm(2)}]
For $Y<k/(2\pi)$, the equality
\begin{align*}
\sup_{z\in\mathcal{F}^{Y}_{j}}S_{2k}^{\Gamma}(z)=\sup_{z\in\mathrm{cl}(\mathcal{F}^{Y}_{j}\setminus
\mathcal{F}^{k/(2\pi)}_{j})}S_{2k}^{\Gamma}(z)
\end{align*}
holds; here $\mathrm{cl}(\,\cdot\,)$ refers to the topological closure.
\end{itemize}
\end{lem}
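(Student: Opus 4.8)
The plan is to analyze the behavior of $S_{2k}^{\Gamma}(z)$ inside a single cuspidal neighborhood $\mathcal{F}_{j}^{Y}$ by transporting everything to the standard cusp at $i\infty$ via the scaling matrix $\sigma_{j}$. Writing $z'=\sigma_{j}^{-1}z=x'+iy'$ with $-1/2\leq x'\leq 1/2$ and $y'\geq Y$, the key observation is that the invariance of $S_{2k}^{\Gamma}$ under $\Gamma$, combined with the transformation law of cusp forms, lets me express the quantity in terms of the Fourier expansion of the $f_{j}$ at the cusp $p_{j}$. Since each cusp form vanishes at the cusp, its Fourier expansion at $p_{j}$ has the shape $\sum_{n\geq 1} a_{n} e^{2\pi i n z'}$, and so $S_{2k}^{\Gamma}(z)$, as a function of $y'$ (after averaging over $x'$ or examining the pointwise bound), is governed by sums of terms of the form $|a_{n}|^{2}\, n^{\,\bullet}\, (y')^{2k} e^{-4\pi n y'}$. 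The decisive elementary fact is that the function $y\mapsto y^{2k}e^{-4\pi n y}$ is increasing for $y<k/(2\pi n)$ and decreasing for $y>k/(2\pi n)$, with its maximum at $y=k/(2\pi n)$; the worst case over $n\geq 1$ is $n=1$, giving a critical height $y'=k/(2\pi)$.

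Concretely, I would first show that along any vertical line in the strip, $S_{2k}^{\Gamma}$ (viewed through $\sigma_{j}$) is, as a function of the height $y'$, eventually monotonically decreasing once $y'$ exceeds $k/(2\pi)$. This follows because each Fourier mode with frequency $n\geq 1$ is already past its peak for $y'\geq k/(2\pi)$, so the whole sum is decreasing there. Granting this monotonicity, statement (1) is immediate: if $Y\geq k/(2\pi)$, then on all of $\mathcal{F}_{j}^{Y}$ the height satisfies $y'\geq Y\geq k/(2\pi)$, so the supremum over the cuspidal neighborhood is attained on its lower boundary $y'=Y$, which lies in $\mathcal{F}_{Y}$ (by the very definition of the decomposition $\mathcal{F}=\mathcal{F}_{Y}\cup\mathcal{F}_{1}^{Y}\cup\ldots\cup\mathcal{F}_{h}^{Y}$). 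Hence the sup over $\mathcal{F}_{j}^{Y}$ is bounded by the sup over $\mathcal{F}_{Y}$.

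For statement (2), when $Y<k/(2\pi)$, the same monotonicity argument shows that for heights $y'\geq k/(2\pi)$ the function is decreasing, so no point with $y'>k/(2\pi)$ can improve on the value at $y'=k/(2\pi)$. Therefore the supremum over $\mathcal{F}_{j}^{Y}$ must be attained in the region $Y\leq y'\leq k/(2\pi)$, which is precisely the compact collar $\mathrm{cl}(\mathcal{F}_{j}^{Y}\setminus\mathcal{F}_{j}^{k/(2\pi)})$. Since $\mathcal{F}_{j}^{k/(2\pi)}\subset\mathcal{F}_{j}^{Y}$ when $Y<k/(2\pi)$, removing it and taking the closure gives exactly this collar, and the sup over the full neighborhood equals the sup over the collar. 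The reverse inequality (sup over the subset is at most sup over the whole) is trivial, yielding the claimed equality.

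The main obstacle I anticipate is making the monotonicity statement fully rigorous while only assuming that $S_{2k}^{\Gamma}$ is $\Gamma$-invariant and built from cusp forms. One must carefully justify differentiating (or otherwise comparing) the series term by term — that is, show that the Fourier expansions converge well enough on the relevant range of $y'$ to permit the mode-by-mode peak analysis, and verify that the pointwise supremum in $x'$ does not interact with the $y'$-dependence in a way that destroys monotonicity. Concretely, I would bound the tail of the Fourier series uniformly for $y'\geq Y$ (using the rapid decay $e^{-4\pi n y'}$) so that the finite-mode behavior controls the sign of $\partial_{y'}\big((y')^{2k}$-weighted sum$\big)$, and then conclude that the weighted sum is genuinely decreasing for $y'\geq k/(2\pi)$. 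This is the step where the hypothesis $k\in\mathbb{N}_{\geq 1}$ and the clean threshold $k/(2\pi)$ enter, and where care is needed to handle the cross terms arising from the orthonormal basis $\{f_{1},\ldots,f_{d_{2k}}\}$ appearing in $S_{2k}^{\Gamma}$.
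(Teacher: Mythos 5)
Your overall strategy matches the paper's in outline --- pass to the cusp via $\sigma_{j}$, use the Fourier expansion, locate the peak of $y^{2k}e^{-4\pi y}$ at $y=k/(2\pi)$, and conclude that the supremum over $\mathcal{F}^{Y}_{j}$ is attained on the bottom boundary (part (1)) resp.\ in the collar $\mathrm{cl}(\mathcal{F}^{Y}_{j}\setminus\mathcal{F}^{k/(2\pi)}_{j})$ (part (2)). But the step carrying all the weight, namely that $\sup_{x'}\vert f(x'+iy')\vert^{2}(y')^{2k}$ (or $S_{2k}^{\Gamma}$ itself) is decreasing in $y'$ for $y'\geq k/(2\pi)$, is not established by your mode-by-mode argument, and your proposed repair cannot close it. Pointwise in $x'$, the expansion of $\vert f\vert^{2}$ contains the cross terms $a_{m}\bar{a}_{n}e^{2\pi i(m-n)x'}e^{-2\pi(m+n)y'}$ with $m\neq n$; these have no fixed sign, so the fact that each diagonal mode $\vert a_{n}\vert^{2}(y')^{2k}e^{-4\pi ny'}$ is past its peak says nothing about the sign of $\partial_{y'}$ of the full sum. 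This is a sign problem, not a convergence problem: uniform tail bounds in $n$ (your suggested fix) do not help, because the obstruction already sits in the lowest modes. Indeed, pointwise monotonicity is simply false in general: if $f$ has a zero at $x_{0}+iy_{0}$ with $y_{0}>k/(2\pi)$, then along the vertical line $x'=x_{0}$ the function $\vert f\vert^{2}(y')^{2k}$ vanishes at $y_{0}$ and increases just above it. (The $x'$-averaged version $\sum_{n}\vert a_{n}\vert^{2}(y')^{2k}e^{-4\pi ny'}$ \emph{is} decreasing, but it only controls the $L^{2}$-mean in $x'$, not the sup.)

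The paper's proof avoids monotonicity along vertical lines entirely, and this is the idea you are missing: factor $\vert f(z)\vert^{2}y^{2k}=\vert f(z)e^{-2\pi iz}\vert^{2}\cdot h_{k}(y)$ with $h_{k}(y):=e^{-4\pi y}y^{2k}$. The first factor is bounded and subharmonic on the strip (in the coordinate $q=e^{2\pi iz}$ it is $\vert f/q\vert^{2}$ with $f/q$ holomorphic on the full disk $\vert q\vert\leq e^{-2\pi Y}$, including $q=0$, precisely because $f$ is cuspidal), so by the maximum principle its supremum over $\{y'\geq Y\}$ is attained on the line $y'=Y$ (resp.\ $y'=k/(2\pi)$ for part (2)); the second factor $h_{k}$ is a scalar function of $y'$ alone, decreasing for $y'\geq k/(2\pi)$ by elementary calculus. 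Multiplying the two bounds gives the lemma for a single form, with no claim about the $y'$-behavior of $\vert f\vert^{2}y^{2k}$ itself. Finally, the passage from one form to $S_{2k}^{\Gamma}$ that worried you is handled either by applying the maximum principle directly to the subharmonic sum $\sum_{j}\vert f_{j}(z)e^{-2\pi iz}\vert^{2}$, or by the standard identity $S_{2k}^{\Gamma}(z)=\sup\{\vert f(z)\vert^{2}y^{2k}\,:\,\Vert f\Vert_{\mathrm{Pet}}=1\}$, which lets one take the supremum over unit vectors outside the supremum over $z$; no analysis of cross terms between basis elements is needed.
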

\begin{proof}
(1) Without loss of generality, we may assume that $p_{j}=i\infty$ with scaling matrix $\sigma_{j}=
\mathrm{id}$, so that we have
\begin{align*}
\mathcal{F}^{Y}_{j}=\{z=x+iy\in\mathbb{H}\,\vert\,-1/2\leq x\leq 1/2,\,y\geq Y\}.
\end{align*}
By then focussing on a single cusp form $f\in\mathcal{S}_{2k}^{\Gamma}$ with Fourier expansion
\begin{align*}
f(z)=\sum\limits_{n=1}^{\infty}a_{n}e^{2\pi inz},
\end{align*}
we have to estimate the expression
\begin{align*}
\vert f(z)\vert^{2}\,y^{2k}=\bigg\vert\frac{f(z)}{e^{2\pi i z}}\bigg\vert^{2}\,e^{-4\pi y}y^{2k}
\end{align*}
in the strip $\mathcal{F}^{Y}_{j}$. Since the function $\vert f(z)/e^{2\pi i z}\vert^{2}$ is bounded and 
subharmonic in $\mathcal{F}^{Y}_{j}$, the strong maximum principle for subharmonic functions implies 
that its maximum occurs when $y=Y$.

Next we consider the function $h_{k}(y):=e^{-4\pi y}y^{2k}$ for $y>0$. Elementary calculus shows that
\begin{align*}
h'_{k}(y)=2ke^{-4\pi y}y^{2k-1}-4\pi e^{-4\pi y}y^{2k},
\end{align*}
so then $h_{k}(y)$ achieves its maximum when $y=k/(2\pi)\leq Y$. Therefore, by the monotonicity of 
the function $h_{k}(y)$, we find that
\begin{align*}
\max_{z\in\mathcal{F}^{Y}_{j}}\vert f(z)\vert^{2}\,y^{2k}=\max_{\substack{-1/2\leq x\leq 1/2\\y=Y}}\vert 
f(z)\vert^{2}\,y^{2k}\leq\max_{z\in\mathcal{F}_{Y}}\vert f(z)\vert^{2}\,y^{2k},
\end{align*}
which proves the first part of the claim.

(2) Since $Y<k/(2\pi)$, we have the proper decomposition
\begin{align*}
\mathcal{F}^{Y}_{j}=\mathcal{F}^{k/(2\pi)}_{j}\cup\big(\mathcal{F}^{Y}_{j}\setminus\mathcal{F}^{k/(2
\pi)}_{j}\big).
\end{align*}
Proceeding as in (1), we are then led to the equality
\begin{align*}
\max_{z\in\mathcal{F}^{k/(2\pi)}_{j}}\vert f(z)\vert^{2}\,y^{2k}=\max_{\substack{-1/2\leq x\leq 1/2\\y=
k/(2\pi)}}\vert f(z)\vert^{2}\,y^{2k}.
\end{align*}
From this we immediately conclude that
\begin{align*}
\max_{z\in\mathcal{F}^{Y}_{j}}\vert f(z)\vert^{2}\,y^{2k}=\max_{z\in\mathrm{cl}(\mathcal{F}^{Y}_{j}
\setminus\mathcal{F}^{k/(2\pi)}_{j})}\vert f(z)\vert^{2}\,y^{2k},
\end{align*}
which proves the second part of the claim.
\end{proof}

The next lemma is due to L.D.~Faddeev~\cite{Faddeev}; for its proof, we follow~\cite[p.~307]{Lang2}. 

\begin{lem} 
\label{lemma-4.2}
Let $p_{j}$ be a cusp of $\mathcal{F}$ with scaling matrix $\sigma_{j}$, $z_{0}=x+iy_{0}\in\mathbb
{H}$, and $\delta_{1}>0$. Then, the inequality
\begin{align*}
\sum\limits_{\gamma\in\Gamma\setminus\Gamma_{p_{j}}}\sigma(\sigma_{j}z,\gamma\sigma_{j}z)^
{-\delta_{2}}\leq\bigg(\frac{64}{15}\bigg)^{\delta_{2}-\delta_{1}-1}y_{0}^{-2\delta_{1}-2}y^{-2\delta_{2}+
4\delta_{1}+4}\sum\limits_{\gamma\in\Gamma\setminus\Gamma_{p_{j}}}\sigma(\sigma_{j}z_{0},
\gamma\sigma_{j}z_{0})^{-\delta_{1}-1}
\end{align*}
holds for $z=x+iy\in\mathbb{H}$ with $y\geq 2y_{0}$ and $\delta_{2}\geq\delta_{1}+1$.
\end{lem}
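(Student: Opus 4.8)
The plan is to reduce to the cusp at $i\infty$ and then argue termwise, comparing the summand at height $y$ with the summand at height $y_0$. Since $\sigma(\cdot,\cdot)$ depends only on the hyperbolic distance, it is $\mathrm{PSL}_2(\mathbb{R})$-invariant, so for every $\gamma\in\Gamma$ one has $\sigma(\sigma_j z,\gamma\sigma_j z)=\sigma(z,\tilde\gamma z)$ with $\tilde\gamma:=\sigma_j^{-1}\gamma\sigma_j=\big(\begin{smallmatrix}a&b\\c&d\end{smallmatrix}\big)\in\sigma_j^{-1}\Gamma\sigma_j$. As $\gamma$ runs through $\Gamma\setminus\Gamma_{p_j}$, the matrix $\tilde\gamma$ runs through $\sigma_j^{-1}\Gamma\sigma_j$ outside the stabilizer $\langle\big(\begin{smallmatrix}1&1\\0&1\end{smallmatrix}\big)\rangle$ of $i\infty$; in particular $c\neq0$, since $c=0$ would force $\tilde\gamma$ to fix $i\infty$. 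Because both sides of the asserted inequality are series indexed by the same set, it suffices to bound each $\gamma$-summand on the left by the corresponding $\gamma$-summand on the right.

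Next I would make the height-dependence explicit. Writing $z=x+iy$ and using~\eqref{formula-2.1}, a direct computation yields, for fixed $x$ and fixed $\tilde\gamma$ with $c\neq0$,
\[
\sigma(z,\tilde\gamma z)=\tfrac{c^2}{4}\,y^2+B+\tfrac{P^2}{4}\,y^{-2},
\]
where $P:=cx^2+(d-a)x-b$ and $B:=\tfrac14\big(2c^2x^2+2c(d-a)x+a^2+d^2+2\big)$ (here $ad-bc=1$ is used to simplify $B$). The decisive point is that $z_0=x+iy_0$ has the \emph{same} real part $x$, so the three coefficients $c^2/4$, $B$, $P^2/4$ are identical for $z$ and $z_0$ and only the height enters; moreover $B>0$, as the quadratic in $x$ has negative discriminant.

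With this formula I would establish two termwise estimates. First, a monotone comparison: since $y\geq2y_0\geq y_0$, each of the three terms of $\sigma(z_0,\tilde\gamma z_0)=\tfrac{c^2}{4}y_0^2+B+\tfrac{P^2}{4}y_0^{-2}$ is at most the corresponding term of $\tfrac{y^2}{y_0^2}\,\sigma(z,\tilde\gamma z)$, whence $\sigma(z_0,\tilde\gamma z_0)\leq\tfrac{y^2}{y_0^2}\,\sigma(z,\tilde\gamma z)$ and therefore $\sigma(z,\tilde\gamma z)^{-(\delta_1+1)}\leq(y^2/y_0^2)^{\delta_1+1}\,\sigma(z_0,\tilde\gamma z_0)^{-(\delta_1+1)}$. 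Second, a genuine quadratic lower bound: from $\sigma(z,w)\geq\tfrac14\,\mathrm{Im}(z)/\mathrm{Im}(w)$ (a consequence of $|z-\bar w|^2\geq(\mathrm{Im}(z)+\mathrm{Im}(w))^2$) together with $\mathrm{Im}(\tilde\gamma z)=y/|cz+d|^2$ one obtains $\sigma(z,\tilde\gamma z)\geq\tfrac14|cz+d|^2\geq\tfrac14c^2y^2\geq\tfrac14y^2$, the last step using Shimizu's lemma $|c|\geq1$ exactly as in Case~3 of Lemma~\ref{lemma-3.4}. Since $\tfrac14\geq\tfrac{15}{64}$ and $\delta_2-\delta_1-1\geq0$, this gives $\sigma(z,\tilde\gamma z)^{-(\delta_2-\delta_1-1)}\leq(64/15)^{\delta_2-\delta_1-1}\,y^{-2(\delta_2-\delta_1-1)}$.

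It then remains to combine. Splitting $\sigma(z,\tilde\gamma z)^{-\delta_2}=\sigma(z,\tilde\gamma z)^{-(\delta_1+1)}\cdot\sigma(z,\tilde\gamma z)^{-(\delta_2-\delta_1-1)}$ and inserting the two estimates, the power of $y_0$ is $-2\delta_1-2$, the power of $y$ is $2(\delta_1+1)-2(\delta_2-\delta_1-1)=-2\delta_2+4\delta_1+4$, and the numerical factor is $(64/15)^{\delta_2-\delta_1-1}$; summing over $\gamma\in\Gamma\setminus\Gamma_{p_j}$ (the series converge because $\delta_1+1>1$) reproduces the claim precisely. I expect the only real obstacle to be the quadratic lower bound on $\sigma(z,\tilde\gamma z)$: a naive estimate gives only $\sigma(z,\tilde\gamma z)\geq B\geq\tfrac12$, a constant, which cannot supply the decay $y^{-2(\delta_2-\delta_1-1)}$; the $y^2$ growth is furnished solely by the $\tfrac{c^2}{4}y^2$ term, and it is essential to know $|c|\geq1$, i.e.\ that the cusp width has been normalised to $1$, via Shimizu's lemma. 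The explicit formula and the three-term comparison are otherwise routine.
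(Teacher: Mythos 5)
Your proposal is correct: the expansion $\sigma(z,\tilde\gamma z)=\tfrac{c^{2}}{4}y^{2}+B+\tfrac{P^{2}}{4}y^{-2}$ agrees with the paper's identity $4y^{2}u(z,\gamma z)=(cx^{2}+dx-ax-b)^{2}+(cx+d)^{2}y^{2}+(cx-a)^{2}y^{2}+c^{2}y^{4}-2y^{2}$ (indeed $4B=(cx+d)^{2}+(cx-a)^{2}+2$, which gives the positivity of $B$ even more directly than your discriminant computation), and both termwise estimates, as well as their combination, check out. The ingredients coincide with the paper's proof --- reduction to the cusp $i\infty$, the same polynomial-in-$y$ expansion with $\mathrm{Re}(z_{0})=\mathrm{Re}(z)$, Shimizu's lemma $\vert c\vert\geq 1$, and a summand-by-summand comparison --- but the assembly is genuinely different. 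The paper argues additively: it establishes the single inequality
\begin{align*}
\frac{64}{15y^{2}}\,\sigma(z,\gamma z)\geq\frac{64y_{0}^{2}}{15y^{4}}\,\sigma(z_{0},\gamma z_{0})+c^{2}\geq 1,
\end{align*}
using $y\geq 2y_{0}$ to absorb $-c^{2}y_{0}^{4}$ into $\tfrac{15}{16}c^{2}y^{4}$ (the origin of the constant $64/15$), and then raises the left side to the power $\delta_{2}$ and the right side to the power $\delta_{1}+1$, which is legitimate since the left side is $\geq 1$ and dominates the right. You instead split the exponent multiplicatively, $\sigma^{-\delta_{2}}=\sigma^{-(\delta_{1}+1)}\cdot\sigma^{-(\delta_{2}-\delta_{1}-1)}$, and bound the two factors independently: the three-term height comparison $\sigma(z_{0},\tilde\gamma z_{0})\leq(y^{2}/y_{0}^{2})\,\sigma(z,\tilde\gamma z)$ (which needs $B>0$), and the lower bound $\sigma(z,\tilde\gamma z)\geq\tfrac{1}{4}\vert cz+d\vert^{2}\geq\tfrac{1}{4}y^{2}$. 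What your route buys: it uses only $y\geq y_{0}$ (the factor $2$ in the hypothesis $y\geq 2y_{0}$ is never needed) and yields the sharper constant $4^{\delta_{2}-\delta_{1}-1}$, which you then relax via $4\leq 64/15$ to match the stated bound. What the paper's route buys: by keeping the comparison additive and adding $4y^{2}+4y_{0}^{2}$ to pass from $u$ to $\sigma$ before comparing, it never needs the sign of the middle coefficient, whereas the analogous coefficient for $u$, namely $(cx+d)^{2}+(cx-a)^{2}-2$, can be negative for suitable $x$. Your closing remark is exactly right: the decay in $y$ is carried solely by the $\tfrac{c^{2}}{4}y^{2}$ term, and the normalization $\vert c\vert\geq 1$ from Shimizu's lemma is indispensable.
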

\begin{proof}
Since we have $\sigma(\sigma_{j}z,\gamma\sigma_{j}z)=\sigma(z,\sigma_{j}^{-1}\gamma\sigma_{j}
z)$ and
\begin{align*}
\sigma_{j}^{-1}\Gamma_{p_{j}}\sigma_{j}=\bigg\langle\begin{pmatrix}1&1\\0&1\end{pmatrix}\bigg
\rangle,
\end{align*}
we may assume without loss of generality that $p_{j}=i\infty$ and $\sigma_{j}=\mathrm{id}$. For 
any
\begin{align*}
\gamma=\begin{pmatrix}a&b\\c&d\end{pmatrix}\in\Gamma\setminus\Gamma_{i\infty},
\end{align*}
we then have $\vert c\vert\geq 1$ by Shimizu's lemma. Using 
\begin{align*}
u(z,w):=\sigma(z,w)-1=\sinh^{2}\bigg(\frac{\mathrm{dist}_{\mathrm{hyp}}(z,w)}{2}\bigg)=\frac{\vert 
z-w\vert^{2}}{4\,\mathrm{Im}(z)\mathrm{Im}(w)}
\end{align*}
with $w=\gamma z$, a direct calculation shows that
\begin{align*} 
4y^{2}u(z,\gamma z)&=\vert cz^{2}+dz-az-b\vert^{2} \\
&=(cx^{2}+dx-ax-b)^{2}+(cx+d)^{2}y^{2}+(cx -a)^{2}y^{2}+c^{2}y^{4}-2y^{2};
\end{align*}
a similar equation holds for $z_{0}=x+iy_{0}.$ Hence, recalling that $y\geq 2y_{0}$, yields the
inequality
\begin{align*}
4y^{2}u(z,\gamma z)\geq 4y_{0}^{2}u(z_{0},\gamma z_{0})+c^{2}y^{4}-c^{2}y_{0}^{4}-2y^{2}+2
y_{0}^{2}.
\end{align*}
Next, adding $4y^{2}+4y_{0}^{2}$ to both sides, gives (again, using $y\geq 2y_{0}$)
\begin{align*}
4y^{2}\sigma(z,\gamma z)\geq 4y_{0}^{2}\sigma(z_{0},\gamma z_{0})+c^{2}(y^{4}-y_{0}^{4})+2
y^{2}-2y_{0}^{2}\geq 4y_{0}^{2}\sigma(z_{0},\gamma z_{0})+\frac{15}{16}c^{2}y^{4}.
\end{align*}
After dividing both sides by $y^{4}$, we obtain
\begin{align*}
\frac{4}{y^{2}}\sigma(z,\gamma z)\geq\frac{4y_{0}^{2}}{y^{4}}\sigma(z_{0},\gamma z_{0})+\frac
{15}{16}c^{2}.
\end{align*}
Next, multiply both sides by $16/15$ and use $\vert c\vert\geq 1$ to get
\begin{align*}
\frac{64}{15y^{2}}\sigma(z,\gamma z)\geq\frac{64y_{0}^{2}}{15y^{4}}\sigma(z_{0},\gamma z_{0})
+c^{2}\geq 1.
\end{align*}
Since both sides are at least one, we obtain after exponentiating with $\delta_{2}\geq\delta_{1}
+1>1$,
\begin{align*}
\bigg(\frac{64}{15y^{2}}\sigma(z,\gamma z)\bigg)^{\delta_{2}}\geq\bigg(\frac{64y_{0}^{2}}{15y^
{4}}\sigma(z_{0},\gamma z_{0})\bigg)^{\delta_{1}+1}.
\end{align*}
Rearranging terms leads to the inequality
\begin{align*}
\sigma(z,\gamma z)^{-\delta_{2}}\leq\bigg(\frac{64}{15}\bigg)^{\delta_{2}-\delta_{1}-1}y_{0}^{-2
\delta_{1}-2}y^{-2\delta_{2}+4\delta_{1}+4}\sigma(z_{0},\gamma z_{0})^{-\delta_{1}-1},
\end{align*}
which proves the claimed inequality after taking the sum over $\gamma\in\Gamma\setminus
\Gamma_{i\infty}$.
\end{proof}

\subsection{Upper bounds for Poincar\'e series in the cuspidal neighborhoods $\mathcal{F}^{Y}_
{j}$}
\label{subsection-4.2}
In this subsection, we will apply Faddeev's lemma to obtain an upper bound for the Poincar\' e 
series $P_{k,\varepsilon}^{\Gamma}(z)$ for $k\in\mathbb{N}_{\geq 2}$ and $\varepsilon>0$, 
when $z$ ranges through the cuspidal neighborhoods $\mathcal{F}^{Y}_{j}$ with $Y<k/(2\pi)$.
We start with the following definition.

\begin{defn} 
\label{definition-4.3}
Let $k\in\mathbb{N}_{\geq 1}$, $\varepsilon>0$, and $Y_{0}>0$. Then, we define the quantities
\begin{align}
\label{formula-4.1}
B_{k,Y_{0}}(\varepsilon):=\pi\,Y_{0}^{-4-2\varepsilon}\,B_{Y_{0}}\,4^{-k+3}\,\frac{2+\varepsilon}
{1+\varepsilon}\bigg(\frac{k}{2\pi}\bigg)^{4+2\varepsilon},
\end{align}
and
\begin{align}
\label{formula-4.2}
B_{k,Y_{0}}:=\lim_{\varepsilon\to 0}B_{k,Y_{0}}(\varepsilon)=2\pi\,Y_{0}^{-4}\,B_{Y_{0}}\,4^{-k+3}
\bigg(\frac{k}{2\pi}\bigg)^{4},
\end{align}
which will be useful for the next lemma.
\end{defn} 

\begin{lem}  
\label{lemma-4.4}
Let $k\in\mathbb{N}_{\geq 2}$, $\varepsilon>0$, $Y_{0}>0$, and $Y:=\max\{2Y_{0},16/\sqrt{{15}}
\}$; assume that $Y<k/(2\pi)$. Then, with $ B_{k,Y_{0}}(\varepsilon)$ given in Definition~\ref
{definition-4.3}, the upper bounds
\begin{align*}
\sum\limits_{\gamma\in\Gamma\setminus\Gamma_{p_{j}}}\sigma(z,\gamma z)^{-(k+\varepsilon)}
\leq B_{k,Y_{0}}(\varepsilon)\qquad(j=1,\ldots,h)
\end{align*}
hold for $z\in\mathrm{cl}(\mathcal{F}^{Y}_{j}\setminus\mathcal{F}^{k/(2\pi)}_{j})$.
\end{lem}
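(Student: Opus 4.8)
The plan is to combine Faddeev's lemma (Lemma~\ref{lemma-4.2}) with the compact-domain Poincar\'e bound (Proposition~\ref{proposition-3.8}). The idea is that Lemma~\ref{lemma-4.2} transfers a sum over $\Gamma\setminus\Gamma_{p_{j}}$ evaluated at a high point $z=x+iy$ in the cuspidal strip to the same sum evaluated at a fixed lower ``anchor'' point $z_{0}=x+iy_{0}$, at the cost of an explicit power of $y_{0}$ and $y$. I would choose the anchor to sit in the base compact region $\mathcal{F}_{Y_{0}}$, so that Proposition~\ref{proposition-3.8} controls the resulting sum.

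\textbf{Step 1.} First I would fix the parameters in Faddeev's lemma. Set $\delta_{2}=k+\varepsilon$ and $\delta_{1}=1+\varepsilon$, so that $\delta_{2}\geq\delta_{1}+1$ holds (this needs $k\geq 2$, which is assumed). Then the exponent $-2\delta_{2}+4\delta_{1}+4=-2k+2\varepsilon+8$ appears on $y$, and $\delta_{1}+1=2+\varepsilon$ is exactly the exponent appearing in Proposition~\ref{proposition-3.8}. For the anchor point take $z_{0}:=x+iY_{0}$; since $Y=\max\{2Y_{0},16/\sqrt{15}\}$, the hypothesis $y\geq 2y_{0}=2Y_{0}$ of Lemma~\ref{lemma-4.2} is automatic on the strip $z\in\mathrm{cl}(\mathcal{F}^{Y}_{j}\setminus\mathcal{F}^{k/(2\pi)}_{j})$, where $Y\leq y\leq k/(2\pi)$. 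Applying Lemma~\ref{lemma-4.2} then yields
\begin{align*}
\sum\limits_{\gamma\in\Gamma\setminus\Gamma_{p_{j}}}\sigma(z,\gamma z)^{-(k+\varepsilon)}\leq\bigg(\frac{64}{15}\bigg)^{k-2}Y_{0}^{-4-2\varepsilon}\,y^{-2k+2\varepsilon+8}\sum\limits_{\gamma\in\Gamma\setminus\Gamma_{p_{j}}}\sigma(z_{0},\gamma z_{0})^{-(2+\varepsilon)}.
\end{align*}

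\textbf{Step 2.} Next I would bound the anchored sum. Dropping the restriction to $\Gamma\setminus\Gamma_{p_{j}}$ and estimating by the full sum over $\Gamma\setminus\{\mathrm{id}\}$ (the stabilizer terms only add nonnegative quantities), Proposition~\ref{proposition-3.8} applied at $z_{0}\in\mathcal{F}_{Y_{0}}$ with exponent $2+\varepsilon=k'+\varepsilon$ in the degenerate shape, or more directly Lemma~\ref{lemma-3.7} with $\delta=2+\varepsilon$, gives a bound of the form $4\pi\,B_{Y_{0}}\,(2+\varepsilon)/(1+\varepsilon)$ for the anchored sum. I would then combine this with the prefactor from Step~1 and use the monotonicity of $y^{-2k+2\varepsilon+8}$ together with the constraint $y\geq Y\geq 16/\sqrt{15}$ to replace the power of $y$ by its largest value on the strip; since the exponent $-2k+2\varepsilon+8$ is negative for $k\geq 5$, the worst case is $y=Y$, and bounding $64/15$ against $y^{2}$ should collapse the $(64/15)^{k-2}$ factor into powers absorbed by $y^{-2k+\cdots}$, ultimately matching the shape $4^{-k+3}(k/(2\pi))^{4+2\varepsilon}$ in Definition~\ref{definition-4.3}.

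\textbf{The main obstacle} will be the bookkeeping that turns the raw exponent $y^{-2k+2\varepsilon+8}$ and the factor $(64/15)^{k-2}$ into the clean closed form $B_{k,Y_{0}}(\varepsilon)=\pi\,Y_{0}^{-4-2\varepsilon}B_{Y_{0}}4^{-k+3}\frac{2+\varepsilon}{1+\varepsilon}(k/(2\pi))^{4+2\varepsilon}$. The delicate point is exploiting $Y\geq 16/\sqrt{15}$ so that $(64/15)^{k-2}y^{-2k+8}\leq 4^{-k+3}$ up to a controlled power of $y$ that becomes the $(k/(2\pi))^{4+2\varepsilon}$ factor after inserting $y\leq k/(2\pi)$ into the remaining positive-exponent portion. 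I expect this to amount to splitting $y^{-2k+2\varepsilon+8}=(y^{-2})^{k}\cdot y^{2\varepsilon+8}$, pairing $(64/15)^{k}(y^{-2})^{k}\leq(64/(15Y^{2}))^{k}\leq 4^{-k}$ using $Y^{2}\geq 256/15$, and handling the leftover $y^{2\varepsilon+8}\leq(k/(2\pi))^{2\varepsilon+8}$ by the upper bound on $y$ in the strip. Matching the constant $\pi$ versus $4\pi$ will come from the factor $64/15$ versus $4^{3}/Y^{2}$ manipulation, and I would verify the arithmetic of the numerical constants as the final check.
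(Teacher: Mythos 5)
Your strategy coincides with the paper's own proof: apply Faddeev's lemma (Lemma~\ref{lemma-4.2}) with $\delta_{1}=1+\varepsilon$, $\delta_{2}=k+\varepsilon$ and anchor $z_{0}$ at height $Y_{0}$, then control the anchored sum by enlarging $\Gamma\setminus\Gamma_{p_{j}}$ to $\Gamma$ and invoking Lemma~\ref{lemma-3.7} with $\delta=2+\varepsilon$, which gives $4\pi B_{Y_{0}}(2+\varepsilon)/(1+\varepsilon)$; the paper does exactly this, working in the conjugated coordinate $z'=\sigma_{j}^{-1}z$ so that the point fed into Lemma~\ref{lemma-3.7} is $\sigma_{j}z_{0}\in\mathcal{F}_{Y_{0}}$ (you should phrase Step~1 this way too, since Lemma~\ref{lemma-3.7} concerns the fixed fundamental domain $\mathcal{F}$, but this is cosmetic). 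The genuine defect is the exponent pairing you commit to at the end. Pairing $(64/15)^{k}y^{-2k}\leq 4^{-k}$ leaves the factor $y^{8+2\varepsilon}\leq(k/(2\pi))^{8+2\varepsilon}$, and assembling your bound gives
\begin{align*}
\bigg(\frac{15}{64}\bigg)^{2}4^{-k}\,Y_{0}^{-4-2\varepsilon}\bigg(\frac{k}{2\pi}\bigg)^{8+2\varepsilon}\cdot 4\pi\,B_{Y_{0}}\,\frac{2+\varepsilon}{1+\varepsilon}\,,
\end{align*}
whose ratio to the target $B_{k,Y_{0}}(\varepsilon)=\pi\,Y_{0}^{-4-2\varepsilon}B_{Y_{0}}4^{-k+3}\frac{2+\varepsilon}{1+\varepsilon}(k/(2\pi))^{4+2\varepsilon}$ equals $\tfrac{225}{65536}\,(k/(2\pi))^{4}$. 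This exceeds $1$ exactly when $k/(2\pi)>16/\sqrt{15}$, i.e., \emph{always} under the lemma's standing hypothesis $Y<k/(2\pi)$ (since $Y\geq 16/\sqrt{15}$). So your split can never recover the constant of Definition~\ref{definition-4.3}; it proves a strictly weaker inequality, with the deficit growing like $(k/(2\pi Y))^{4}$.

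The fix is to pair the factor $(64/15)^{k-2}$ with $y^{-(2k-4)}$ rather than with $y^{-2k}$, which is what the paper does: from $Y\geq 16/\sqrt{15}$ one has $64/15\leq Y^{2}/4$, hence $(64/15)^{k-2}\leq Y^{2k-4}/4^{k-2}$; writing $y^{-2k+8+2\varepsilon}=y^{-(2k-4)}\,y^{4+2\varepsilon}$ and using $y\geq Y$ on the first factor (legitimate since $2k-4\geq 0$) makes the powers of $Y$ cancel exactly, while $y\leq k/(2\pi)$ on the second factor yields $(k/(2\pi))^{4+2\varepsilon}$; then $4^{-(k-2)}\cdot 4\pi=4^{-k+3}\pi$ produces precisely $B_{k,Y_{0}}(\varepsilon)$. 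Note also that your aside about needing $k\geq 5$ for the exponent $-2k+8+2\varepsilon$ to be negative is moot: the hypothesis $Y<k/(2\pi)$ with $Y\geq 16/\sqrt{15}$ already forces $k\geq 26$, and in the corrected split no sign discussion is needed beyond $2k-4\geq 0$.
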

\begin{proof}
With the scaling matrix $\sigma_{j}$ of the cusp $p_{j}$, we define $z':=\sigma_{j}^{-1}z$. We then
employ Lemma~\ref{lemma-4.2} with $z'=x'+iy'$, $z_{0}:=x'+iY_{0}$ (note that $y'\geq 2Y_{0}$) and 
$\delta_{1}:=1+\varepsilon$, $\delta_{2}:=k+\varepsilon$ (note that $\delta_{2}\geq\delta_{1}+1$, 
since $k\in\mathbb{N}_{\geq 2}$), to get
\begin{align*} 
&\sum\limits_{\gamma\in\Gamma\setminus\Gamma_{p_{j}}}\sigma(z,\gamma z)^{-(k+\varepsilon)}
=\sum\limits_{\gamma\in\Gamma\setminus\Gamma_{p_{j}}}\sigma(\sigma_{j}z',\gamma\sigma_
{j}z')^{-(k+\varepsilon)} \\
&\qquad\leq\bigg(\frac{64}{15}\bigg)^{k-2}Y_{0}^{-4-2\varepsilon}y'^{-2k+8+2\varepsilon}\sum\limits_
{\gamma\in\Gamma\setminus\Gamma_{p_{j}}}\sigma(\sigma_{j}z_{0},\gamma\sigma_{j}z_{0})^{-(2+
\varepsilon)}.
\end{align*}
Since we have $Y\geq 2\cdot 8/\sqrt{15}$, we get $(64/15)^{k-2}\leq Y^{2k-4}/4^{k-2}$, which leads 
to the estimate
\begin{align*} 
\sum\limits_{\gamma\in\Gamma\setminus\Gamma_{p_{j}}}\sigma(z,\gamma z)^{-(k+\varepsilon)}
\leq\frac{Y^{2k-4}}{4^{k-2}}Y_{0}^{-4-2\varepsilon}Y^{-2k+4}\bigg(\frac{k}{2\pi}\bigg)^{4+2\varepsilon}
\sum\limits_{\gamma\in\Gamma\setminus\Gamma_{p_{j}}}\sigma(\sigma_{j}z_{0},\gamma\sigma_
{j}z_{0})^{-(2+\varepsilon)};
\end{align*}
here we used that $1<Y\leq y'\leq k/(2\pi)$. Observing now that we have by construction $\sigma_
{j}z_{0}\in\mathcal{F}_{Y_{0}}$, we can bound the latter sum from above by Lemma~\ref{lemma-3.7} 
as
\begin{align*}
\sum\limits_{\gamma\in\Gamma\setminus\Gamma_{p_{j}}}\sigma(\sigma_{j}z_{0},\gamma\sigma_
{j}z_{0})^{-(2+\varepsilon)}\leq 4\pi\,B_{Y_{0}}\,\frac{2+\varepsilon}{1+\varepsilon}.
\end{align*} 
All in all, this proves the claim.
\end{proof}

\begin{lem} 
\label{lemma-4.5}
Let $k\in\mathbb{N}_{\geq 1}$, $\varepsilon>0$, and $Y>0$; assume that $Y<k/(2\pi)$. Then, the
upper bounds
\begin{align*}
\sum\limits_{\gamma\in\Gamma_{p_{j}}\setminus\{\mathrm{id}\}}\sigma(z,\gamma z)^{-(k+\varepsilon)}
\leq\frac{k\,e^{5/4}}{\sqrt{\pi}\,\sqrt{k+\varepsilon}}\qquad(j=1,\ldots,h)
\end{align*}
hold for $z\in\mathrm{cl}(\mathcal{F}^{Y}_{j}\setminus\mathcal{F}^{k/(2\pi)}_{j})$.
\end{lem}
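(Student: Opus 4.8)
The plan is to estimate the parabolic sum directly, since for $\gamma \in \Gamma_{p_j} \setminus \{\mathrm{id}\}$ the displacement function $\sigma(z,\gamma z)$ has an explicit closed form. As in the proof of Lemma~\ref{lemma-4.2}, I would first normalize by assuming without loss of generality that $p_j = i\infty$ and $\sigma_j = \mathrm{id}$, using the identity $\sigma(\sigma_j z, \gamma \sigma_j z) = \sigma(z, \sigma_j^{-1}\gamma\sigma_j z)$ together with $\sigma_j^{-1}\Gamma_{p_j}\sigma_j = \langle \left(\begin{smallmatrix}1&1\\0&1\end{smallmatrix}\right)\rangle$. Under this normalization the nontrivial elements of $\Gamma_{p_j}$ are exactly the translations $z \mapsto z+n$ for $n \in \mathbb{Z}\setminus\{0\}$, and from formula~\eqref{formula-2.1} one computes
\begin{align*}
\sigma(z, z+n) = \frac{|z - \bar z - n|^2}{4\,\mathrm{Im}(z)^2} = \frac{n^2}{4y^2} + 1,
\end{align*}
writing $z = x+iy$. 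Thus the sum to be bounded becomes the completely explicit series $\sum_{n \neq 0} (n^2/(4y^2) + 1)^{-(k+\varepsilon)} = 2\sum_{n=1}^{\infty}(n^2/(4y^2)+1)^{-(k+\varepsilon)}$, which no longer involves the group at all.

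The next step is to bound this one-variable series by comparison with an integral. Since the summand $(n^2/(4y^2)+1)^{-(k+\varepsilon)}$ is a decreasing function of $n \geq 0$, I would use the standard inequality $\sum_{n=1}^{\infty} f(n) \leq \int_0^{\infty} f(t)\,\mathrm{d}t$ for a positive decreasing $f$, giving
\begin{align*}
2\sum_{n=1}^{\infty}\bigg(\frac{n^2}{4y^2}+1\bigg)^{-(k+\varepsilon)} \leq 2\int_0^{\infty}\bigg(\frac{t^2}{4y^2}+1\bigg)^{-(k+\varepsilon)}\,\mathrm{d}t.
\end{align*}
Substituting $t = 2y\,u$ turns the integral into $4y \int_0^{\infty}(1+u^2)^{-(k+\varepsilon)}\,\mathrm{d}u$, and the latter is a Beta-type integral: $\int_0^\infty (1+u^2)^{-s}\,\mathrm{d}u = \tfrac12 B(\tfrac12, s - \tfrac12) = \tfrac{\sqrt{\pi}}{2}\,\Gamma(s-\tfrac12)/\Gamma(s)$ for $s = k+\varepsilon > 1/2$. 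This yields a clean closed form $2\sqrt{\pi}\,y\,\Gamma(k+\varepsilon-\tfrac12)/\Gamma(k+\varepsilon)$ for the bound.

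It then remains to insert the range constraint and to estimate the Gamma-quotient. On the region $\mathrm{cl}(\mathcal{F}^Y_j \setminus \mathcal{F}^{k/(2\pi)}_j)$ the normalized imaginary part satisfies $y \leq k/(2\pi)$, so $y$ is replaced by $k/(2\pi)$, contributing the factor $k/\sqrt{\pi}$ once combined with the $2\sqrt\pi$. The heart of the matter is the ratio $\Gamma(k+\varepsilon-\tfrac12)/\Gamma(k+\varepsilon)$, which I expect to be the main technical point: I would show it is bounded above by $e^{5/4}/\sqrt{k+\varepsilon}$, matching the claimed shape $k\,e^{5/4}/(\sqrt{\pi}\,\sqrt{k+\varepsilon})$. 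This is a Stirling-type estimate; the asymptotic $\Gamma(s-\tfrac12)/\Gamma(s) \sim s^{-1/2}$ is classical, but making the constant $e^{5/4}$ \emph{effective} for all $k \geq 1$ (rather than merely asymptotic) requires a careful nonasymptotic bound, for instance via Wendel's inequality or an explicit form of Stirling's formula with controlled error term. The difficulty is purely in pinning down this explicit constant; once the Gamma-ratio estimate is in hand, assembling the factors $2\sqrt\pi \cdot (k/(2\pi)) \cdot e^{5/4}/\sqrt{k+\varepsilon}$ and simplifying gives exactly the asserted bound, uniformly in $j$ since the normalization removed all dependence on the particular cusp.
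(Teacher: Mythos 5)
Your proposal is correct and follows essentially the same route as the paper: normalize to $p_{j}=i\infty$ so that $\sigma(z,z+n)=n^{2}/(4y^{2})+1$, compare the resulting series with the integral $\int_{0}^{\infty}(1+u^{2})^{-(k+\varepsilon)}\,\mathrm{d}u=\tfrac{\sqrt{\pi}}{2}\,\Gamma(k+\varepsilon-\tfrac{1}{2})/\Gamma(k+\varepsilon)$, insert $y\leq k/(2\pi)$, and finish with an effective Gamma-ratio bound. The one step you flag but do not carry out, $\Gamma(Z-1/2)/\Gamma(Z)\leq e^{5/4}/\sqrt{Z}$ for $Z=k+\varepsilon\geq 1$, is exactly the paper's Lemma~\ref{lemma-6.3} (an explicit Stirling estimate), and your suggested alternatives (Wendel's inequality or explicit Stirling with error control) would indeed close it.
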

\begin{proof}
With the scaling matrix $\sigma_{j}$ of the cusp $p_{j}$, we define $z':=\sigma_{j}^{-1}z$. Recalling 
that
\begin{align*}
\sigma_{j}^{-1}\Gamma_{p_{j}}\sigma_{j}=\bigg\langle\begin{pmatrix}1&1\\0&1\end{pmatrix}\bigg
\rangle,
\end{align*}
we compute using $z'=x'+iy'$ that
\begin{align*}
&\sum\limits_{\gamma\in\Gamma_{p_{j}}\setminus\{\mathrm{id}\}}\sigma(z,\gamma z)^{-(k+\varepsilon)}
=\sum\limits_{\gamma\in\Gamma_{p_{j}}\setminus\{\mathrm{id}\}}\sigma(\sigma_{j}z',\gamma\sigma_
{j}z')^{-(k+\varepsilon)} \\
&\qquad=\sum\limits_{\substack{n\in\mathbb{Z}\\n\neq 0}}\sigma(z',z'+n)^{-(k+\varepsilon)}=2\sum
\limits_{n=1}^{\infty}\bigg(1+\bigg(\frac{n}{2y'}\bigg)^{2}\bigg)^{-(k+\varepsilon)}.
\end{align*}
By an integral test we obtain the upper bound (recalling formula 3.251.2 from~\cite{GR81})
\begin{align*}
\frac{1}{2y'}\sum\limits_{n=1}^{\infty}\frac{1}{\big(1+\big(\frac{n}{2y'}\big)^{2}\big)^{k+\varepsilon}}
\leq\int\limits_{0}^{\infty}\frac{1}{(1+\nu^{2})^{k+\varepsilon}}\,\mathrm{d}\nu=\frac{\sqrt{\pi}\,
\Gamma(k-1/2+\varepsilon)}{2\,\Gamma(k+\varepsilon)}\,.
\end{align*}
Using now that $Y\leq y'\leq k/(2\pi)$, we arrive at the upper bound
\begin{align*}
\sum\limits_{\gamma\in\Gamma_{p_{j}}\setminus\{\mathrm{id}\}}\sigma(z,\gamma z)^{-(k+\varepsilon)}
\leq\frac{k\,\Gamma(k-1/2+\varepsilon)}{\sqrt{\pi}\,\Gamma(k+\varepsilon)}\,.
\end{align*}
An application of an effective version of Stirling's formula (see Lemma~\ref{lemma-6.3} of the Appendix) 
gives 
\begin{align*}
\frac{k\,\Gamma(k-1/2+\varepsilon)}{\sqrt{\pi}\,\Gamma(k+\varepsilon)}\leq\frac{k\,e^{5/4}}{\sqrt{\pi}
\,\sqrt{k+\varepsilon}}\,,
\end{align*}
which completes the proof of the lemma.
\end{proof}

\section{Main results}
\label{section-5}
Based on the upper bounds for the Poincar\'e series $P_{k,\varepsilon}^{\Gamma}(z)$ established 
for $z$ ranging through the compact domain $\mathcal{F}_{Y}$ in Subsection~\ref{subsection-3.2} 
and for $z$ ranging through the cuspidal neigborhoods $\mathcal{F}^{Y}_{j}$ in Subsection~\ref
{subsection-4.2}, we are now in position to state and prove the main results of this paper providing 
upper bounds for the supremum of the quantity $S_{2k}^{\Gamma}(z)$ in the cocompact as well 
as in the cofinite setting. We also address the question of lower bounds for the quantity under 
consideration. We end this section with some explicit computations in the case of the modular 
group $\Gamma=\mathrm{PSL}_{2}(\mathbb{Z})$.

\subsection{Main results in the cocompact setting}
\label{subsection-5.1}
In this subsection, we will give an effective upper bound for the supremum of the quantity $S_{2k}^
{\Gamma}(z)$ for $k\in\mathbb{N}_{\geq 2}$, when $z$ is ranging through the compact domain 
$\mathcal{F}_{Y}$. In particular, this will lead us to effective upper and lower bounds for the 
supremum of the quantity $S_{2k}^{\Gamma}(z)$, when $\Gamma$ is cocompact and torsionfree. 
We start by establishing an upper bound for the quantity $S_{2k}^{\Gamma}(z)$ in terms of the 
Poincar\'e series $P_{k,\varepsilon}^{\Gamma}(z)$, which is valid for all $z\in\mathbb{H}$.

\begin{prop}
\label{proposition-5.1}
Let $k\in\mathbb{N}_{\geq 1}$ and $0<\varepsilon<1$. Then, the inequality
\begin{align*}
S_{2k}^{\Gamma}(z)\leq\frac{(2k-1+\varepsilon)(1+\varepsilon)}{4\pi}+\frac{3(2k+\varepsilon)(2k-
1+\varepsilon)(1+\varepsilon)}{4\pi(k+\varepsilon)}\sum\limits_{\gamma\in\Gamma\setminus
\{\mathrm{id}\}}\sigma(z,\gamma z)^{-(k+\varepsilon)}
\end{align*}
holds for $z\in\mathbb{H}$.
\end{prop}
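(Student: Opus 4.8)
The plan is to specialise the spectral identity of Lemma~\ref{lemma-2.1} to $s=k+\varepsilon$ and $t=s+1=k+1+\varepsilon$, so that the geometric sum appearing on the right-hand side of~\eqref{formula-2.3} is exactly $\sum_{\gamma}g_k(s;z,\gamma z)$, and then to isolate on the left-hand side the contribution coming from cusp forms. First I would verify admissibility: for $0<\varepsilon<1$ and $k\in\mathbb{N}_{\geq 1}$ one checks directly that $s=k+\varepsilon\in W_k$ and that $t>s>1$, as the lemma requires.

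The crucial point is that the cusp forms of weight $2k$ are precisely the eigenfunctions $\varphi_j$ with eigenvalue $\lambda_j=k(1-k)$, via the isometric correspondence $\varphi_j(z)=f_j(z)y^k$, so that $\sum_{\lambda_j=k(1-k)}|\varphi_j(z)|^2=S_{2k}^{\Gamma}(z)$. Writing $\lambda=s(1-s)$ and $\mu=t(1-t)$, a short computation shows that for these eigenvalues
\[
\lambda_j-\lambda=\varepsilon(2k-1+\varepsilon),\qquad \lambda_j-\mu=(2k+\varepsilon)(1+\varepsilon),
\]
so their combined contribution to the left-hand side equals $c(\varepsilon)\,S_{2k}^{\Gamma}(z)$ with
\[
c(\varepsilon):=\frac{1}{\varepsilon(2k-1+\varepsilon)}-\frac{1}{(2k+\varepsilon)(1+\varepsilon)}=\frac{2(k+\varepsilon)}{\varepsilon(2k-1+\varepsilon)(2k+\varepsilon)(1+\varepsilon)}.
\]
Since $s>k>\tfrac12$ and $t>s$, the map $r\mapsto r(1-r)$ is decreasing, whence $\lambda_j-\lambda>0$ and $\lambda_j-\mu>\lambda_j-\lambda>0$ for every $j$; thus each discrete term is non-negative, and the Eisenstein integrand is non-negative as well because $\tfrac14+r^2>0\geq\lambda>\mu$. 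Dropping all terms except those with $\lambda_j=k(1-k)$ therefore gives the lower bound $\mathrm{LHS}\geq c(\varepsilon)\,S_{2k}^{\Gamma}(z)$.

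Next I would evaluate the right-hand side. The digamma combination telescopes via $\psi(x+1)=\psi(x)+1/x$: with $s+k=2k+\varepsilon$, $s-k=\varepsilon$, $t+k=2k+1+\varepsilon$, $t-k=1+\varepsilon$, one obtains $\psi(s+k)+\psi(s-k)-\psi(t+k)-\psi(t-k)=-\tfrac{1}{2k+\varepsilon}-\tfrac1\varepsilon$, so this term contributes $\tfrac{1}{4\pi}\cdot\tfrac{2(k+\varepsilon)}{\varepsilon(2k+\varepsilon)}$. In the geometric sum, each of the factors $\big(\tfrac{c\bar z+d}{cz+d}\big)^k$ and $\big(\tfrac{z-\gamma\bar z}{\gamma z-\bar z}\big)^k$ has modulus $1$, so the sum is dominated by $\sum_{\gamma\neq\mathrm{id}}|g_k(s;z,\gamma z)|$. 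Combining the lower bound for the left-hand side with the identity and dividing by $c(\varepsilon)$ yields
\[
S_{2k}^{\Gamma}(z)\leq\frac{1}{c(\varepsilon)}\cdot\frac{1}{4\pi}\cdot\frac{2(k+\varepsilon)}{\varepsilon(2k+\varepsilon)}+\frac{1}{c(\varepsilon)}\sum_{\gamma\neq\mathrm{id}}|g_k(s;z,\gamma z)|,
\]
and simplifying $1/c(\varepsilon)$ collapses the first term to precisely $\tfrac{(2k-1+\varepsilon)(1+\varepsilon)}{4\pi}$.

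It then remains to replace $|g_k(s;z,\gamma z)|$ by $\sigma(z,\gamma z)^{-(k+\varepsilon)}$. For this I would invoke the explicit resolvent estimate of the Appendix (Subsection~\ref{subsection-6.1}), namely a bound of the form $|g_k(s;z,w)|\leq\tfrac{3}{2\pi\varepsilon}\,\sigma(z,w)^{-(k+\varepsilon)}$ valid for $s=k+\varepsilon$. Using $1/c(\varepsilon)=\tfrac{\varepsilon(2k-1+\varepsilon)(2k+\varepsilon)(1+\varepsilon)}{2(k+\varepsilon)}$, the prefactor of the Poincar\'e series becomes $\tfrac{3}{4\pi}\cdot\tfrac{(2k-1+\varepsilon)(2k+\varepsilon)(1+\varepsilon)}{k+\varepsilon}$, exactly the claimed coefficient (note how the $1/\varepsilon$ in the kernel bound cancels the $\varepsilon$ in $1/c(\varepsilon)$). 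The main obstacle is this last step: the remainder of the argument is bookkeeping with the spectral identity, whereas bounding the resolvent difference $g_k(s;z,w)=G_k(s;z,w)-G_k(s+1;z,w)$ pointwise by a clean multiple of $\sigma^{-s}$—with the decisive factor $1/\varepsilon$ emerging from the near-diagonal cancellation in $G_k(s)-G_k(s+1)$—requires the explicit hypergeometric form of the kernel.
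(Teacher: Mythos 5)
Your proposal is correct and follows essentially the same route as the paper's own proof: it specializes Lemma~\ref{lemma-2.1} at $s=k+\varepsilon$, $t=s+1$, isolates the $\lambda_{j}=k(1-k)$ terms (your explicit positivity check justifying the discarding of the other discrete and Eisenstein contributions is a detail the paper leaves implicit), computes the same constant $c(\varepsilon)=r(k,\varepsilon)$ and digamma telescoping as in~\eqref{formula-5.1} and~\eqref{formula-5.2}, and concludes with the resolvent estimate $g_{k}(k+\varepsilon;\sigma)\leq\frac{3}{2\pi\varepsilon}\sigma^{-(k+\varepsilon)}$ of Lemma~\ref{lemma-6.2}. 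The only minor discrepancy is your closing remark that this kernel bound requires the explicit hypergeometric form of the resolvent: the paper actually derives it from the heat-kernel integral representation~\eqref{formula-6.4}, but since you invoke the Appendix estimate exactly as the paper does, this does not affect the argument.
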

\begin{proof}
Letting $\lambda=s(1-s)$ and $\mu=t(1-t)$ with $s,t\in W_{k}\cap\mathbb{R}$ such that $t>s>1$, 
formula~\eqref{formula-2.3} of Lemma~\ref{lemma-2.1} states the equality
\begin{align*} 
&\sum\limits_{j=0}^{\infty}\bigg(\frac{1}{\lambda_{j}-\lambda}-\frac{1}{\lambda_{j}-\mu}\bigg)\vert
\varphi_{j}(z)\vert^{2}+\frac{1}{4\pi}\sum\limits_{j=1}^{h}\int\limits_{-\infty}^{\infty}\bigg(\frac{1}{\frac
{1}{4}+r^{2}-\lambda}-\frac{1}{\frac{1}{4}+r^{2}-\mu}\bigg)\bigg\vert E_{j}\bigg(z,\frac{1}{2}+ir\bigg)
\bigg\vert^{2}\,\mathrm{d}r \\
&= -\frac{1}{4\pi}\big(\psi(s+k)+\psi(s-k)-\psi(t+k)-\psi(t-k)\big)+\sum\limits_{\gamma\in\Gamma
\setminus\{\mathrm{id}\}}\bigg(\frac{c\bar{z}+d}{cz+d}\bigg)^{k}\bigg(\frac{z-\gamma\bar{z}}
{\gamma z-\bar{z}}\bigg)^{k}g_{k}(s;z,\gamma z).
\end{align*}
Restricting the summation on the left-hand side of the above formula to the eigenvalue $\lambda_
{j}=k(1-k)$ and neglecting all the other summands and taking absolute values on the right-hand 
side, then yields the inequality
\begin{align*}
&\sum\limits_{j=1}^{d_{2k}}\bigg(\frac{1}{k(1-k)-s(1-s)}-\frac{1}{k(1-k)-t(1-t)}\bigg)\vert f_{j}(z)
\vert^{2}y^{2k} \\
&\qquad\leq\frac{1}{4\pi}\big\vert\psi(s+k)+\psi(s-k)-\psi(t+k)-\psi(t-k)\big\vert+\sum\limits_
{\gamma\in\Gamma\setminus\{\mathrm{id}\}}\big\vert g_{k}(s;z,\gamma z)\big\vert.
\end{align*}
Next we choose $s=k+\varepsilon$ and $t=s+1=k+1+\varepsilon$, and compute
\begin{align}
\notag
r(k,\varepsilon)&:=\frac{1}{k(1-k)-s(1-s)}-\frac{1}{k(1-k)-t(1-t)} \\[1mm]
\notag
&\,\,=\frac{1}{\varepsilon(2k-1+\varepsilon)}-\frac{1}{2k+\varepsilon(2k+1+\varepsilon)} \\[1mm]
\notag
&\,\,=\frac{2(k+\varepsilon)}{\varepsilon(2k-1+\varepsilon)(2k+\varepsilon(2k+1+\varepsilon))} 
\\[1mm]
\label{formula-5.1}
&\,\,=\frac{2(k+\varepsilon)}{\varepsilon(2k+\varepsilon)(2k-1+\varepsilon)(1+\varepsilon)}\,.
\end{align}
Furthermore, recalling that the digamma function $\psi(s)$ satisfies the functional equation $\psi
(s+1)-\psi(s)=1/s$, leads to the relation
\begin{align}
\notag
&\psi(s+k)+\psi(s-k)-\psi(t+k)-\psi(t-k) \\[2mm]
\notag
&\qquad =\psi(2k+\varepsilon)+\psi(\varepsilon)-\psi(2k+1+\varepsilon)-\psi(1+\varepsilon) \\
\label{formula-5.2}
&\qquad =-\frac{1}{2k+\varepsilon}-\frac{1}{\varepsilon}=-\frac{2(k+\varepsilon)}{\varepsilon(2k+
\varepsilon)}\,.
\end{align}
Collecting the above calculations then gives the upper bound
\begin{align*}
\frac{2(k+\varepsilon)}{\varepsilon(2k+\varepsilon)(2k-1+\varepsilon)(1+\varepsilon)}\sum\limits_
{j=1}^{d_{2k}}\vert f_{j}(z)\vert^{2}y^{2k}\leq\frac{2(k+\varepsilon)}{4\pi\varepsilon(2k+\varepsilon)}
+\sum\limits_{\gamma\in\Gamma\setminus\{\mathrm{id}\}}g_{k}(k+\varepsilon;z,\gamma z),
\end{align*}
in other words, we have the upper bound
\begin{align*}
S_{2k}^{\Gamma}(z)\leq\frac{(2k-1+\varepsilon)(1+\varepsilon)}{4\pi}+\frac{\varepsilon(2k+
\varepsilon)(2k-1+\varepsilon)(1+\varepsilon)}{2(k+\varepsilon)}\sum\limits_{\gamma\in\Gamma
\setminus\{\mathrm{id}\}}g_{k}(k+\varepsilon;z,\gamma z),
\end{align*}
which is valid for all $z\in\mathbb{H}$. Since $0<\varepsilon<1$, Lemma~\ref{lemma-6.2} of the 
Appendix applies and provides for all $z\in\mathbb{H}$ and $\gamma\in\Gamma$ the inequality
\begin{align*}
g_{k}(k+\varepsilon;z,\gamma z)\leq\frac{3}{2\pi\varepsilon}\sigma(z,\gamma z)^{-(k+\varepsilon)},
\end{align*}
from which the claimed inequality follows.
\end{proof}

In the next theorem we prove the first part of Theorem~B given in the introduction.

\begin{thm}
\label{theorem-5.2} 
Let $k\in\mathbb{N}_{\geq 2}$ and $Y>0$. Then, with $\sigma_{Y}$ given in Definition~\ref
{definition-3.3} and $B_{Y}$ given in Definition~\ref{definition-3.5} , the upper bound
\begin{align*}
\sup_{z\in\mathcal{F}_{Y}}S_{2k}^{\Gamma}(z)&\leq\frac{2k-1}{4\pi}\bigg(1+6\sum\limits_{e_{j}
\in\mathcal{E}}(n_{j}-1)\bigg)+12(2k-1)B_{Y}\,\sigma_{Y}^{-(k-2)}
\end{align*}
holds.
\end{thm}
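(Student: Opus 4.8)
The plan is to assemble the two uniform bounds established above. Proposition~\ref{proposition-5.1} controls $S_{2k}^{\Gamma}(z)$ pointwise in terms of the Poincar\'e series $\sum_{\gamma\in\Gamma\setminus\{\mathrm{id}\}}\sigma(z,\gamma z)^{-(k+\varepsilon)}$ for every $0<\varepsilon<1$, and Proposition~\ref{proposition-3.8} bounds exactly that Poincar\'e series from above by a quantity that is \emph{independent} of $z\in\mathcal{F}_{Y}$. Since both hypotheses are met for $k\in\mathbb{N}_{\geq 2}$ and $0<\varepsilon<1$, the first step is simply to insert the bound of Proposition~\ref{proposition-3.8} into the inequality of Proposition~\ref{proposition-5.1}. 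This produces, for each $z\in\mathcal{F}_{Y}$ and each $0<\varepsilon<1$, an explicit upper bound on $S_{2k}^{\Gamma}(z)$ whose right-hand side depends only on $k$, $\varepsilon$, $\sigma_{Y}$, $B_{Y}$, and the elliptic data $\sum_{e_{j}\in\mathcal{E}}(n_{j}-1)$.

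The second step is to let $\varepsilon\to 0^{+}$. Because $S_{2k}^{\Gamma}(z)$ does not depend on $\varepsilon$ and the assembled right-hand side is a rational function of $\varepsilon$ that is continuous at $\varepsilon=0$, the inequality survives in the limit. Reading off the limiting values, the leading term $(2k-1+\varepsilon)(1+\varepsilon)/(4\pi)$ tends to $(2k-1)/(4\pi)$, the prefactor $3(2k+\varepsilon)(2k-1+\varepsilon)(1+\varepsilon)/(4\pi(k+\varepsilon))$ tends to $3(2k-1)/(2\pi)$, and the factor $(2+\varepsilon)/(1+\varepsilon)$ appearing in Proposition~\ref{proposition-3.8} tends to $2$. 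Multiplying out, the $B_{Y}$ contribution collapses to $\tfrac{3(2k-1)}{2\pi}\cdot 8\pi\,B_{Y}\,\sigma_{Y}^{-(k-2)}=12(2k-1)B_{Y}\,\sigma_{Y}^{-(k-2)}$, while the elliptic contribution becomes $\tfrac{3(2k-1)}{2\pi}\sum_{e_{j}\in\mathcal{E}}(n_{j}-1)=\tfrac{2k-1}{4\pi}\cdot 6\sum_{e_{j}\in\mathcal{E}}(n_{j}-1)$; together with the leading term these combine to give precisely the right-hand side in the statement.

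The final step is immediate: the bound so obtained is independent of $z$, so taking the supremum over $z\in\mathcal{F}_{Y}$ changes nothing and yields the asserted inequality. The only point deserving a word of care is the passage to the limit $\varepsilon\to 0^{+}$, but this is harmless, since the pointwise inequality holds for \emph{every} admissible $\varepsilon$ and its right-hand side extends continuously to $\varepsilon=0$. I therefore expect no genuine analytic obstacle at this stage: the theorem is essentially the clean algebraic assembly of Propositions~\ref{proposition-5.1} and~\ref{proposition-3.8}, with all the substantive work already done upstream in Proposition~\ref{proposition-5.1} (via the spectral expansion of Lemma~\ref{lemma-2.1} and the resolvent estimate) and in Proposition~\ref{proposition-3.8} (via the displacement bound of Lemma~\ref{lemma-3.4} and the lattice-point counting of Lemmas~\ref{lemma-3.6} and~\ref{lemma-3.7}).
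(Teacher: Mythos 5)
Your proposal is correct and is essentially identical to the paper's own proof: the paper likewise inserts the bound of Proposition~\ref{proposition-3.8} into the inequality~\eqref{formula-5.3} furnished by Proposition~\ref{proposition-5.1} for $z\in\mathcal{F}_{Y}$, and then lets $\varepsilon\rightarrow 0$, with the same limiting constants you compute ($3(2k-1)/(2\pi)$ for the prefactor, factor $2$ from $(2+\varepsilon)/(1+\varepsilon)$, yielding $12(2k-1)B_{Y}\,\sigma_{Y}^{-(k-2)}$ and the elliptic term $6\sum_{e_{j}\in\mathcal{E}}(n_{j}-1)$). Your justification of the passage to the limit, via continuity of the right-hand side at $\varepsilon=0$ and the fact that the inequality holds for every admissible $\varepsilon$, is exactly the (implicit) content of the paper's final step.
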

\begin{proof}
Given $0<\varepsilon<1$, Proposition~\ref{proposition-5.1} provides for all $z\in\mathbb{H}$ the 
upper bound
\begin{align}
\label{formula-5.3}
S_{2k}^{\Gamma}(z)\leq\frac{(2k-1+\varepsilon)(1+\varepsilon)}{4\pi}+\frac{3(2k+\varepsilon)(2k
-1+\varepsilon)(1+\varepsilon)}{4\pi(k+\varepsilon)}\sum\limits_{\gamma\in\Gamma\setminus
\{\mathrm{id}\}}\sigma(z,\gamma z)^{-(k+\varepsilon)}\,.
\end{align}
By means of Proposition~\ref{proposition-3.8}, we then obtain for $z\in\mathcal{F}_{Y}$ the upper
bound
\begin{align*}
S_{2k}^{\Gamma}(z)&\leq\frac{(2k-1+\varepsilon)(1+\varepsilon)}{4\pi}+\frac{3(2k+\varepsilon)(2k
-1+\varepsilon)(1+\varepsilon)}{4\pi(k+\varepsilon)}\times \\
&\hspace*{21mm}\times\bigg(4\pi\,\frac{2+\varepsilon}{1+\varepsilon}\,B_{Y}\,\sigma_{Y}^{-(k-2)}+
\sum\limits_{e_{j}\in\mathcal{E}}(n_{j}-1)\bigg).
\end{align*}
Letting $\varepsilon\rightarrow 0$, we thus arrive for $z\in\mathcal{F}_{Y}$ at the upper bound
\begin{align*}
S_{2k}^{\Gamma}(z)&\leq\frac{2k-1}{4\pi}\bigg(1+6\sum\limits_{e_{j}\in\mathcal{E}}(n_{j}-1)\bigg)
+12(2k-1)B_{Y}\,\sigma_{Y}^{-(k-2)}\,,
\end{align*}
which concludes the proof of the theorem.
\end{proof}

In the next theorem we prove Theorem~A given in the introduction.

\begin{thm} 
\label{theorem-5.3}
Let $\Gamma$ be cocompact and torsionfree, and let $k\in\mathbb{N}_{\geq 2}$. Then, the bounds
\begin{align*}
\frac{2k-1}{4\pi}\leq\sup_{z\in\mathbb{H}}S_{2k}^{\Gamma}(z)\leq\frac{2k-1}{4\pi}+C_{\Gamma}\,
e^{-\delta_{\Gamma}k}
\end{align*}
hold, where the constants $C_{\Gamma}$ and $\delta_{\Gamma}$ are effectively computable as
\begin{align*}
C_{\Gamma}=\frac{3\,e^{4\pi g_{\Gamma}/\ell_{\Gamma}}}{\pi(g_{\Gamma}-1)}\frac{(\cosh(\ell_
{\Gamma})+1)^{2}}{\log((\cosh(\ell_{\Gamma})+1)/2)}\quad\text{and}\quad\delta_{\Gamma}=\frac
{1}{2}\log\bigg(\frac{\cosh(\ell_{\Gamma})+1}{2}\bigg).
\end{align*}
\end{thm}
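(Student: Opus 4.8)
The plan is to specialize Theorem~\ref{theorem-5.2} to the present setting and then to make the two geometric quantities $\sigma_Y$ and $B_Y$ explicit. Since $\Gamma$ is cocompact we have $\mathcal{F}_Y=\mathcal{F}$ for every $Y>0$, so that $\sup_{z\in\mathbb{H}}S_{2k}^{\Gamma}(z)=\sup_{z\in\mathcal{F}_Y}S_{2k}^{\Gamma}(z)$; and since $\Gamma$ is moreover torsionfree, the set $\mathcal{E}$ of elliptic fixed points is empty, so the correction term $6\sum_{e_j\in\mathcal{E}}(n_j-1)$ vanishes. Theorem~\ref{theorem-5.2} therefore reduces the desired upper bound to the estimate
\begin{align*}
12(2k-1)\,B_Y\,\sigma_Y^{-(k-2)}\leq C_{\Gamma}\,e^{-\delta_{\Gamma}k},
\end{align*}
and it remains to bound $\sigma_Y$ from below and $B_Y$ from above purely in terms of $g_{\Gamma}$ and $\ell_{\Gamma}$.

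For $\sigma_Y$ I would invoke Lemma~\ref{lemma-3.4}: with no cusps and no elliptic elements present, only Case~1 of its proof survives, so every $\gamma\in\Gamma\setminus\{\mathrm{id}\}$ is hyperbolic and $\sigma_Y\geq(\cosh(\ell_{\Gamma})+1)/2=:A$. As $k\geq 2$, the exponent $-(k-2)$ is nonpositive, whence $\sigma_Y^{-(k-2)}\leq A^{-(k-2)}=A^{2}A^{-k}$; and since $\delta_{\Gamma}=\tfrac12\log A$, we have $A^{-k}=e^{-2\delta_{\Gamma}k}=e^{-\delta_{\Gamma}k}\,e^{-\delta_{\Gamma}k}$. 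For $B_Y=e^{\mathrm{diam}_{\mathrm{hyp}}(\mathcal{F})/2}/\mathrm{vol}_{\mathrm{hyp}}(\mathcal{F})$ I would use Gauss--Bonnet, $\mathrm{vol}_{\mathrm{hyp}}(\mathcal{F})=4\pi(g_{\Gamma}-1)$, together with the bound $\mathrm{diam}_{\mathrm{hyp}}(\mathcal{F})\leq 8\pi g_{\Gamma}/\ell_{\Gamma}$; the latter I would obtain by packing pairwise disjoint embedded balls of radius $\ell_{\Gamma}/2$ (each embedded since the injectivity radius is at least $\ell_{\Gamma}/2$) along a minimizing geodesic realizing the diameter and comparing their total volume $2\pi(\cosh(\ell_{\Gamma}/2)-1)$ per ball with $\mathrm{vol}_{\mathrm{hyp}}(\mathcal{F})$, using the elementary inequality $\ell^{2}/(\cosh(\ell/2)-1)\leq 8<4\pi$. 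This yields $B_Y\leq e^{4\pi g_{\Gamma}/\ell_{\Gamma}}/(4\pi(g_{\Gamma}-1))$.

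Assembling these bounds, I would keep one factor $e^{-\delta_{\Gamma}k}$ outside and absorb the polynomial into the other via $(2k-1)e^{-\delta_{\Gamma}k}\leq 2k\,e^{-\delta_{\Gamma}k}\leq 2/\delta_{\Gamma}$, which rests on the elementary maximization $x e^{-\alpha x}\leq 1/(e\alpha)\leq 1/\alpha$. Substituting $A^{2}=(\cosh(\ell_{\Gamma})+1)^{2}/4$, $1/\delta_{\Gamma}=2/\log((\cosh(\ell_{\Gamma})+1)/2)$, and the bound on $B_Y$ then collapses the constant $\tfrac{24\,B_Y A^{2}}{\delta_{\Gamma}}$ to exactly $C_{\Gamma}$, establishing the upper bound. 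For the lower bound I would integrate: since $\{f_j\}$ is orthonormal for the Petersson product, $\int_{\mathcal{F}}S_{2k}^{\Gamma}(z)\,\mu_{\mathrm{hyp}}(z)=d_{2k}$, so the supremum dominates the average, $\sup_{z\in\mathbb{H}}S_{2k}^{\Gamma}(z)\geq d_{2k}/\mathrm{vol}_{\mathrm{hyp}}(\mathcal{F})$; the Riemann--Roch dimension formula $d_{2k}=(2k-1)(g_{\Gamma}-1)$ (valid for $k\geq 2$ on a genus $g_{\Gamma}\geq 2$ surface) together with $\mathrm{vol}_{\mathrm{hyp}}(\mathcal{F})=4\pi(g_{\Gamma}-1)$ produces the clean value $(2k-1)/(4\pi)$.

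The main obstacle is the diameter bound $\mathrm{diam}_{\mathrm{hyp}}(\mathcal{F})\leq 8\pi g_{\Gamma}/\ell_{\Gamma}$. The packing argument naturally controls the \emph{intrinsic} diameter of $M=\Gamma\backslash\mathbb{H}$, since disjointness of the chain of balls relies on sub-arcs of a \emph{minimizing} geodesic being minimizing; but Definition~\ref{definition-3.5} and Lemma~\ref{lemma-3.6} require the \emph{extrinsic} hyperbolic diameter of the chosen fundamental domain $\mathcal{F}\subset\mathbb{H}$. Since this diameter sits in the exponent $e^{\mathrm{diam}_{\mathrm{hyp}}(\mathcal{F})/2}$, a na\"ive covering-radius comparison losing even a constant factor cannot be reabsorbed into $C_{\Gamma}$, so the real work lies in choosing $\mathcal{F}$ and running the geodesic chain so that the balls stay genuinely disjoint in $M$ while keeping the constant at precisely $8\pi g_{\Gamma}/\ell_{\Gamma}$; the spectral input from Proposition~\ref{proposition-5.1} and Theorem~\ref{theorem-5.2} is, by contrast, already in hand.
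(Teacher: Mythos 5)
Your upper-bound skeleton is exactly the paper's: specialize Theorem~\ref{theorem-5.2} with $\mathcal{F}_{Y}=\mathcal{F}$ (cocompact) and $\mathcal{E}=\emptyset$ (torsionfree), take $\sigma_{Y}\geq(\cosh(\ell_{\Gamma})+1)/2$ from Case~1 of Lemma~\ref{lemma-3.4}, bound $B_{Y}\leq e^{4\pi g_{\Gamma}/\ell_{\Gamma}}/(4\pi(g_{\Gamma}-1))$ via the diameter inequality $\mathrm{diam}_{\mathrm{hyp}}(\mathcal{F})\leq 8\pi g_{\Gamma}/\ell_{\Gamma}$, and absorb the linear factor by an elementary exponential maximization. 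Your assembly is arithmetically correct: $(2k-1)e^{-\delta_{\Gamma}k}\leq 2/\delta_{\Gamma}$ combined with $\sigma_{Y}^{-(k-2)}\leq A^{2}e^{-2\delta_{\Gamma}k}$ does collapse $24\,B_{Y}A^{2}/\delta_{\Gamma}$ to exactly $C_{\Gamma}$; the paper's version, $xe^{-2ax}\leq e^{-ax}/a$ with $a=\log\sigma_{\Gamma}$ and $x=k/2$, is the same manipulation. Your lower bound is also fine and in fact more self-contained than the paper's: the paper simply cites~\cite{FJK16}, whereas your average argument with $\int_{\mathcal{F}}S_{2k}^{\Gamma}\,\mu_{\mathrm{hyp}}=d_{2k}$, $d_{2k}=(2k-1)(g_{\Gamma}-1)$ by Riemann--Roch, and $\mathrm{vol}_{\mathrm{hyp}}(\mathcal{F})=4\pi(g_{\Gamma}-1)$ by Gauss--Bonnet is the correct cocompact, torsionfree specialization of the computation in Proposition~\ref{proposition-5.6} and gives $(2k-1)/(4\pi)$ exactly.

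The genuine gap is the one you flagged yourself: the diameter bound. The paper does not prove $\mathrm{diam}_{\mathrm{hyp}}(\mathcal{F})\leq 2\,\mathrm{vol}_{\mathrm{hyp}}(\mathcal{F})/\ell_{\Gamma}$ at all --- it imports precisely this inequality from Chang~\cite{Chang} --- whereas you attempt a packing proof and do not finish it. Your packing is sound as far as it goes: disjoint embedded balls of radius $\ell_{\Gamma}/2$ (injectivity radius at least half the systole) with centers spaced $\ell_{\Gamma}$ along a minimizing geodesic, each of area $2\pi(\cosh(\ell_{\Gamma}/2)-1)\geq\pi\ell_{\Gamma}^{2}/4$, yields the \emph{intrinsic} bound $\mathrm{diam}(M)\leq 2\,\mathrm{vol}_{\mathrm{hyp}}(M)/\ell_{\Gamma}$, and your inequality $\ell^{2}/(\cosh(\ell/2)-1)\leq 8<4\pi$ is correct since $\cosh x-1\geq x^{2}/2$. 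But Definition~\ref{definition-3.5} and Lemma~\ref{lemma-3.6} require the diameter of $\mathcal{F}$ as a subset of $\mathbb{H}$, and the generic comparison (e.g., through the center of a Dirichlet domain) only gives $\mathrm{diam}_{\mathrm{hyp}}(\mathcal{F})\leq 2\,\mathrm{diam}(M)$; since the diameter sits in the exponent defining $B_{Y}$, that factor of $2$ would replace $e^{4\pi g_{\Gamma}/\ell_{\Gamma}}$ by its square and the stated $C_{\Gamma}$ would no longer be established. As written, your proposal therefore proves the theorem only with a weaker (though still effective) constant; to get $C_{\Gamma}$ exactly as stated you need Chang's theorem for the fundamental domain itself, which is exactly how the paper closes this step, and the ``real work'' you defer at the end is not optional but is the content of that citation.
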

\begin{proof}
The lower bound has been proven in~\cite[Sec.~7.1]{FJK16}. As far as the proof of the upper bound 
is concerned, we recall that in the cocompact setting we have chosen $\mathcal{F}_{Y}=\mathcal{F}$, 
so that we obtain from Theorem~\ref{theorem-5.2} 
\begin{align*}
\sup_{z\in\mathbb{H}}S_{2k}^{\Gamma}(z)\leq\frac{2k-1}{4\pi}+12(2k-1)B_{Y}\,\sigma_{Y}^{-(k-2)},
\end{align*}
where we have from Lemma~\ref{lemma-3.4} and Definition~\ref{definition-3.5} that
\begin{align*}
\sigma_{Y}\geq\frac{\cosh(\ell_{\Gamma})+1}{2}\qquad\text{and}\qquad B_{Y}=\frac{e^{\mathrm
{diam}_{\mathrm{hyp}}(\mathcal{F})/2}}{\mathrm{vol}_{\mathrm{hyp}}(\mathcal{F})}\,,
\end{align*}
respectively; to simplify notations, we set $\sigma_{\Gamma}:=(\cosh(\ell_{\Gamma})+1)/2$. Using 
the inequality 
\begin{align*}
\mathrm{diam}_{\mathrm{hyp}}(\mathcal{F})\leq\frac{2\,\mathrm{vol}_{\mathrm{hyp}}(\mathcal{F})}
{\ell_{\Gamma}}\leq\frac{8\pi g_{\Gamma}}{\ell_{\Gamma}}
\end{align*}
proven in~\cite{Chang}, we can estimate $B_{Y}$ as
\begin{align*}
B_{Y}\leq\frac{e^{4\pi g_{\Gamma}/\ell_{\Gamma}}}{4\pi(g_{\Gamma}-1)}\,.
\end{align*}
Now, taking into account the inequalities
\begin{align*}
ax\leq e^{ax}\qquad\Longleftrightarrow\qquad xe^{-2ax}\leq\frac{e^{-ax}}{a}\,,
\end{align*}
which are valid for $a>0$ and $x\geq 0$, we derive by choosing $a=\log(\sigma_{\Gamma})$ and 
$x=k/2$ that
\begin{align*}
\frac{k}{2}\sigma_{\Gamma}^{-k}\leq\frac{e^{-\log(\sigma_{\Gamma})k/2}}{\log(\sigma_{\Gamma})}\,.
\end{align*}
Since $k\in\mathbb{N}_{\geq 2}$, we conclude from the above that
\begin{align*}
&12(2k-1)B_{Y}\,\sigma_{Y}^{-(k-2)}\leq 12(2k-1)B_{Y}\,\sigma_{\Gamma}^{-(k-2)} \\[1mm]
&\qquad\leq 48B_{Y}\,\sigma_{\Gamma}^{2}\,\frac{k}{2}\,\sigma_{\Gamma}^{-k}\leq\frac{12\,e^{4
\pi g_{\Gamma}/\ell_{\Gamma}}}{\pi(g_{\Gamma}-1)}\,\sigma_{\Gamma}^{2}\,\frac{e^{-\log(\sigma_
{\Gamma})k/2}}{\log(\sigma_{\Gamma})}\,,
\end{align*}
which proves the claim with the constants $C_{\Gamma}$ and $\delta_{\Gamma}$ as stated in the 
theorem.
\end{proof}

\subsection{Main results in the cofinite setting}
\label{subsection-5.2}
In this subsection, we will give an effective upper bound for the supremum of the quantity $S_{2k}^
{\Gamma}(z)$ for $k\in\mathbb{N}_{\geq 2}$, when $z$ is ranging through the cuspidal neighborhoods 
$\mathcal{F}^{Y}_{j}$. By Lemma~\ref{lemma-4.1}, we may assume that $Y<k/(2\pi)$, since in the 
case $Y\geq k/(2\pi)$ the desired upper bound is also provided by Theorem~\ref{theorem-5.2}. This
will prove the second part of Theorem~B given in the introduction.

\begin{thm}
\label{theorem-5.4} 
Let $k\in\mathbb{N}_{\geq 2}$, $Y_{0}>0$, and $Y:=\max\{2Y_{0},16/\sqrt{15}\}$; assume $Y< k/
(2\pi)$. Then, with $B_{k,Y_{0}}$ given in Definition~\ref{definition-4.3}, the upper bounds
\begin{align*}
\sup_{z\in\mathcal{F}^{Y}_{j}}S_{2k}^{\Gamma}(z)\leq\frac{2k-1}{4\pi}+\frac{3(2k-1)}{2\pi}\bigg(B_
{k,Y_{0}}+\frac{\sqrt{k}\,e^{5/4}}{\sqrt{\pi}}\bigg)=O(k^{3/2})
\end{align*}
hold for $j=1,\ldots,h$.
\end{thm}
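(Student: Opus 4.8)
The plan is to combine the pointwise bound from Proposition~\ref{proposition-5.1} with the two Poincar\'e-series estimates of Lemma~\ref{lemma-4.4} and Lemma~\ref{lemma-4.5}, after first reducing the supremum over the unbounded neighborhood $\mathcal{F}^{Y}_{j}$ to a supremum over a compact region. First I would invoke Lemma~\ref{lemma-4.1}(2): since $Y<k/(2\pi)$ by hypothesis, the supremum of $S_{2k}^{\Gamma}(z)$ over $\mathcal{F}^{Y}_{j}$ equals its supremum over the compact set $\mathrm{cl}(\mathcal{F}^{Y}_{j}\setminus\mathcal{F}^{k/(2\pi)}_{j})$, which is precisely the region on which Lemma~\ref{lemma-4.4} and Lemma~\ref{lemma-4.5} are stated.

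Next, fixing $0<\varepsilon<1$, I would apply Proposition~\ref{proposition-5.1} to bound $S_{2k}^{\Gamma}(z)$ on this compact set in terms of the full Poincar\'e series $\sum_{\gamma\in\Gamma\setminus\{\mathrm{id}\}}\sigma(z,\gamma z)^{-(k+\varepsilon)}$. Splitting this series along the disjoint decomposition $\Gamma\setminus\{\mathrm{id}\}=(\Gamma\setminus\Gamma_{p_{j}})\cup(\Gamma_{p_{j}}\setminus\{\mathrm{id}\})$, I would bound the first piece by $B_{k,Y_{0}}(\varepsilon)$ using Lemma~\ref{lemma-4.4} and the second by $k\,e^{5/4}/(\sqrt{\pi}\,\sqrt{k+\varepsilon})$ using Lemma~\ref{lemma-4.5}.

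Finally I would let $\varepsilon\to 0$ in the resulting inequality. The leading term $(2k-1+\varepsilon)(1+\varepsilon)/(4\pi)$ tends to $(2k-1)/(4\pi)$, the prefactor $3(2k+\varepsilon)(2k-1+\varepsilon)(1+\varepsilon)/(4\pi(k+\varepsilon))$ collapses to $3(2k-1)/(2\pi)$, and the two sub-sum bounds converge to $B_{k,Y_{0}}$ (by Definition~\ref{definition-4.3}) and to $\sqrt{k}\,e^{5/4}/\sqrt{\pi}$, respectively, producing the stated inequality. The $O(k^{3/2})$ claim then follows by noting that $B_{k,Y_{0}}$ carries the factor $4^{-k+3}$ and hence decays exponentially, so that the contribution $3(2k-1)\sqrt{k}\,e^{5/4}/(2\pi\sqrt{\pi})$ dominates.

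Because every technical estimate is already in place, I expect no genuine analytic obstacle; the only care required is the bookkeeping of matching each sub-sum to the correct lemma and verifying that the $\varepsilon$-dependent coefficients converge exactly to the clean constants $(2k-1)/(4\pi)$ and $3(2k-1)/(2\pi)$ as $\varepsilon\to 0$.
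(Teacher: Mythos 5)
Your proposal is correct and follows essentially the same route as the paper's proof: reduction to the compact truncation $\mathrm{cl}(\mathcal{F}^{Y}_{j}\setminus\mathcal{F}^{k/(2\pi)}_{j})$ via Lemma~\ref{lemma-4.1}(2), the pointwise bound of Proposition~\ref{proposition-5.1}, the split of the Poincar\'e series into the $\Gamma\setminus\Gamma_{p_{j}}$ and $\Gamma_{p_{j}}\setminus\{\mathrm{id}\}$ pieces handled by Lemma~\ref{lemma-4.4} and Lemma~\ref{lemma-4.5}, and the limit $\varepsilon\to 0$. Your bookkeeping of the limiting constants, including the collapse of the prefactor to $3(2k-1)/(2\pi)$ and the observation that the exponentially decaying $B_{k,Y_{0}}$ leaves the $\sqrt{k}$ term dominant for the $O(k^{3/2})$ claim, matches the paper exactly.
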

\begin{proof}
Since the inequality $Y<k/(2\pi)$ holds by assumption, the second part of Lemma~\ref{lemma-4.1} 
allows us to restrict the range for $z$ from $\mathcal{F}^{Y}_{j}$ to $\mathrm{cl}(\mathcal{F}^{Y}_
{j}\setminus\mathcal{F}^{k/(2\pi)}_{j})$ in the subsequent estimates.

Given $0<\varepsilon<1$, Proposition~\ref{proposition-5.1} provides for all $z\in\mathbb{H}$ the 
upper bound
\begin{align*}
S_{2k}^{\Gamma}(z)\leq\frac{(2k-1+\varepsilon)(1+\varepsilon)}{4\pi}+\frac{3(2k+\varepsilon)(2k
-1+\varepsilon)(1+\varepsilon)}{4\pi(k+\varepsilon)}\sum\limits_{\gamma\in\Gamma\setminus
\{\mathrm{id}\}}\sigma(z,\gamma z)^{-(k+\varepsilon)}\,.
\end{align*}
By means of the decomposition
\begin{align*}
\sum\limits_{\gamma\in\Gamma\setminus\{\mathrm{id}\}}\sigma(z,\gamma z)^{-(k+\varepsilon)}
\leq\sum\limits_{\gamma\in\Gamma\setminus\Gamma_{p_{j}}}\sigma(z,\gamma z)^{-(k+
\varepsilon)}+\sum\limits_{\gamma\in\Gamma_{p_{j}}\setminus\{\mathrm{id}\}}\sigma(z,\gamma 
z)^{-(k+\varepsilon)}\,,
\end{align*}
we then obtain for $z\in\mathrm{cl}(\mathcal{F}^{Y}_{j}\setminus\mathcal{F}^{k/(2\pi)}_{j})$, using 
Lemma~\ref{lemma-4.4} and Lemma~\ref{lemma-4.5}, that
\begin{align*}
\sum\limits_{\gamma\in\Gamma\setminus\{\mathrm{id}\}}\sigma(z,\gamma z)^{-(k+\varepsilon)}
\leq B_{k,Y_{0}}(\varepsilon)+\frac{k\,e^{5/4}}{\sqrt{\pi}\,\sqrt{k+\varepsilon}}\,,
\end{align*}
which yields the upper bound
\begin{align*}
S_{2k}^{\Gamma}(z)\leq\frac{(2k-1+\varepsilon)(1+\varepsilon)}{4\pi}+\frac{3(2k+\varepsilon)(2k-
1+\varepsilon)(1+\varepsilon)}{4\pi(k+\varepsilon)}\bigg(B_{k,Y_{0}}(\varepsilon)+\frac{k\,e^{5/4}}
{\sqrt{\pi}\,\sqrt{k+\varepsilon}}\bigg).
\end{align*}
The proof of the theorem now follows by letting $\varepsilon\rightarrow 0$.
\end{proof}

The next proposition addresses the case $k=1$.

\begin{prop}
\label{proposition-5.5}
Let $0<\varepsilon<1$ and $Y\geq1/(2\pi)$. Then, with $B_{Y}$ given in Definition~\ref{definition-3.5}, 
the upper bound
\begin{align*}
\sup_{z\in\mathbb{H}}S_{2}^{\Gamma}(z)\leq\frac{(1+\varepsilon)^{2}}{4\pi}+\frac{3(1+\varepsilon)^
{2}(2+\varepsilon)}{\varepsilon}B_{Y}
\end{align*}
holds.
\end{prop}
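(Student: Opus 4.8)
The plan is to reduce the supremum over all of $\mathbb{H}$ to a supremum over the compact core $\mathcal{F}_Y$, and then to apply the weight-independent machinery already assembled. Since $S_2^\Gamma(z)$ is $\Gamma$-invariant, I would first restrict to the fundamental domain and use the decomposition $\mathcal{F}=\mathcal{F}_Y\cup\mathcal{F}_1^Y\cup\ldots\cup\mathcal{F}_h^Y$. The crucial observation is that the hypothesis $Y\geq 1/(2\pi)$ is precisely the condition $Y\geq k/(2\pi)$ for $k=1$, so Lemma~\ref{lemma-4.1}(1) applies and yields $\sup_{z\in\mathcal{F}^Y_j}S_2^\Gamma(z)\leq\sup_{z\in\mathcal{F}_Y}S_2^\Gamma(z)$ for every cusp $p_j$. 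Consequently the supremum over $\mathbb{H}$ equals the supremum over the compact domain $\mathcal{F}_Y$, and the delicate cuspidal analysis of Section~\ref{section-4} (needed only in the regime $Y<k/(2\pi)$) is entirely bypassed.

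On the compact piece I would specialize Proposition~\ref{proposition-5.1} to $k=1$. Substituting $k=1$ collapses the constant term to $(1+\varepsilon)^2/(4\pi)$ and the coefficient of the Poincar\'e sum to $3(2+\varepsilon)(1+\varepsilon)/(4\pi)$, giving for $z\in\mathcal{F}_Y$ the bound
\[
S_2^\Gamma(z)\leq\frac{(1+\varepsilon)^2}{4\pi}+\frac{3(2+\varepsilon)(1+\varepsilon)}{4\pi}\sum_{\gamma\in\Gamma\setminus\{\mathrm{id}\}}\sigma(z,\gamma z)^{-(1+\varepsilon)}.
\]
It then remains to control the Poincar\'e series. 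Here, in contrast to the case $k\geq 2$ handled through Proposition~\ref{proposition-3.8}, where the factor $\sigma_Y^{-(k-2)}$ supplies decay in $k$, the exponent $k-2=-1$ would work against us; instead I would invoke Lemma~\ref{lemma-3.7} directly with $\delta=1+\varepsilon>1$. Since $\sigma(z,z)=1$, discarding the identity term only helps, and Lemma~\ref{lemma-3.7} furnishes $\sum_{\gamma\in\Gamma\setminus\{\mathrm{id}\}}\sigma(z,\gamma z)^{-(1+\varepsilon)}\leq 4\pi\,B_Y(1+\varepsilon)/\varepsilon$ for all $z\in\mathcal{F}_Y$.

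Combining the two estimates, the coefficient $3(2+\varepsilon)(1+\varepsilon)/(4\pi)$ multiplies $4\pi\,B_Y(1+\varepsilon)/\varepsilon$, the factors of $4\pi$ cancel, and one is left exactly with $3(1+\varepsilon)^2(2+\varepsilon)B_Y/\varepsilon$, which is the claimed inequality, uniform in $z\in\mathcal{F}_Y$ and hence in $z\in\mathbb{H}$. I anticipate no genuine obstacle: the whole argument is a specialization of already-proven results. The only points requiring care are checking that $0<\varepsilon<1$ simultaneously meets the hypotheses of Proposition~\ref{proposition-5.1} (which requires $0<\varepsilon<1$) and of Lemma~\ref{lemma-3.7} (which requires $\delta=1+\varepsilon>1$), and confirming that the alignment $Y\geq 1/(2\pi)$ is exactly what licenses the reduction via Lemma~\ref{lemma-4.1}(1) in weight two.
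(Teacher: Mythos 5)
Your proposal is correct and follows essentially the same route as the paper's own proof: specializing Proposition~\ref{proposition-5.1} (equivalently inequality~\eqref{formula-5.3}) to $k=1$, bounding the Poincar\'e series on $\mathcal{F}_{Y}$ via Lemma~\ref{lemma-3.7} with $\delta=1+\varepsilon$, and extending to the cuspidal neighborhoods through Lemma~\ref{lemma-4.1}(1) using $Y\geq 1/(2\pi)=k/(2\pi)$. The only difference is cosmetic ordering---you perform the cusp reduction first, while the paper does it last---and your arithmetic, including the cancellation of the factors $4\pi$, matches the stated constant exactly.
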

\begin{proof}
From inequality~\eqref{formula-5.3}, which is easily verified to hold also for $k=1$, we obtain the 
upper bound
\begin{align*}
S_{2}^{\Gamma}(z)\leq\frac{(1+\varepsilon)^{2}}{4\pi}+\frac{3(1+\varepsilon)(2+\varepsilon)}{4\pi}
\sum\limits_{\gamma\in\Gamma\setminus\{\mathrm{id}\}}\sigma(z,\gamma z)^{-(1+\varepsilon)}
\end{align*}
for $z\in\mathbb{H}$. By means of Lemma~\ref{lemma-3.7}, we then arrive at the upper bound
\begin{align*}
S_{2}^{\Gamma}(z)\leq\frac{(1+\varepsilon)^{2}}{4\pi}+\frac{3(1+\varepsilon)^{2}(2+\varepsilon)}
{\varepsilon}B_{Y}
\end{align*}
for $z\in\mathcal{F}_{Y}$. Furthermore, since we have by assumption that $Y\geq 1/(2\pi)$, 
Lemma~\ref{lemma-4.1}, which is also valid for $k=1$, shows that the same upper bound is 
valid for $z\in\mathcal{F}^{Y}_{j}$ for $j=1,\ldots,h$. This proves the claim.
\end{proof}

\subsection{Lower bounds for the sup-norm of $S_{2k}^{\Gamma}(z)$}
\label{subsection-5.3}
In this subsection, we prove lower bounds for the supremum of the quantity $S_{2k}^{\Gamma}(z)$
for $z$ ranging through the compact domain $\mathcal{F}_{Y}$, as well as for $z$ ranging through
the cuspidal neighborhoods~$\mathcal{F}^{Y}_{j}$.

\begin{prop}
\label{proposition-5.6}
Let $g_{\Gamma}\geq 1$, $k\in\mathbb{N}_{\geq 1}$, and $Y\geq k/(2\pi)$. Then, the lower
bound
\begin{align*}
\sup_{z\in\mathcal{F}_{Y}}S_{2k}^{\Gamma}(z)\geq\frac{k-1}{2\pi}
\end{align*}
holds.
\end{prop}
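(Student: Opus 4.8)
The plan is to exploit that $S_{2k}^{\Gamma}(z)$ is, up to normalization, the restriction to the diagonal of the Bergman kernel, so that integrating it over $M$ recovers the dimension $d_{2k}$. Since $\{f_{1},\ldots,f_{d_{2k}}\}$ is orthonormal for the Petersson inner product, I would first record the identity
\[
\int_{M}S_{2k}^{\Gamma}(z)\,\mu_{\mathrm{hyp}}(z)=\sum_{j=1}^{d_{2k}}\int_{M}\vert f_{j}(z)\vert^{2}\,y^{2k}\,\mu_{\mathrm{hyp}}(z)=\sum_{j=1}^{d_{2k}}\langle f_{j},f_{j}\rangle=d_{2k}.
\]
Because $S_{2k}^{\Gamma}$ is $\Gamma$-invariant and $M$ has finite hyperbolic volume, the supremum over $\mathbb{H}$ dominates the average value, which gives $\sup_{z\in\mathbb{H}}S_{2k}^{\Gamma}(z)\geq d_{2k}/\mathrm{vol}_{\mathrm{hyp}}(M)$.

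Next I would transfer this bound to the compact piece $\mathcal{F}_{Y}$. Since $Y\geq k/(2\pi)$ by hypothesis, part~(1) of Lemma~\ref{lemma-4.1} yields $\sup_{z\in\mathcal{F}^{Y}_{j}}S_{2k}^{\Gamma}(z)\leq\sup_{z\in\mathcal{F}_{Y}}S_{2k}^{\Gamma}(z)$ for each cusp. Combined with the $\Gamma$-invariance and the decomposition $\mathcal{F}=\mathcal{F}_{Y}\cup\mathcal{F}^{Y}_{1}\cup\ldots\cup\mathcal{F}^{Y}_{h}$, this shows $\sup_{z\in\mathcal{F}_{Y}}S_{2k}^{\Gamma}(z)=\sup_{z\in\mathbb{H}}S_{2k}^{\Gamma}(z)$, so the proposition reduces to the purely arithmetic-geometric inequality $d_{2k}\geq(k-1)\,\mathrm{vol}_{\mathrm{hyp}}(M)/(2\pi)$.

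For this last inequality I would substitute the Riemann--Roch dimension formula for cusp forms of even weight $2k$ (valid for $k\geq 2$),
\[
d_{2k}=(2k-1)(g_{\Gamma}-1)+(k-1)h+\sum_{e_{j}\in\mathcal{E}}\left\lfloor k\Big(1-\tfrac{1}{n_{j}}\Big)\right\rfloor,
\]
together with the Gauss--Bonnet expression $\mathrm{vol}_{\mathrm{hyp}}(M)=2\pi\big(2g_{\Gamma}-2+h+\sum_{e_{j}\in\mathcal{E}}(1-1/n_{j})\big)$, so that the target becomes
\[
(g_{\Gamma}-1)+\sum_{e_{j}\in\mathcal{E}}\left(\left\lfloor k\Big(1-\tfrac{1}{n_{j}}\Big)\right\rfloor-(k-1)\Big(1-\tfrac{1}{n_{j}}\Big)\right)\geq 0.
\]
The first term is nonnegative exactly because $g_{\Gamma}\geq 1$, which is where that hypothesis enters. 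The key point is the nonnegativity of each elliptic summand: writing $\alpha_{j}:=1-1/n_{j}=(n_{j}-1)/n_{j}$, the fractional part $\{k\alpha_{j}\}$ is a multiple of $1/n_{j}$ that is strictly less than $1$, hence at most $(n_{j}-1)/n_{j}=\alpha_{j}$; therefore $\lfloor k\alpha_{j}\rfloor=k\alpha_{j}-\{k\alpha_{j}\}\geq k\alpha_{j}-\alpha_{j}=(k-1)\alpha_{j}$, as required. The case $k=1$ is trivial, since the claimed lower bound vanishes while $S_{2k}^{\Gamma}\geq 0$. I expect the fractional-part bookkeeping for the elliptic contributions to be the only delicate step; the averaging argument and the reduction via Lemma~\ref{lemma-4.1} are routine.
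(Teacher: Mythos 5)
Your proposal is correct and follows essentially the same route as the paper's proof: the mean-value bound $\sup_{z\in\mathcal{F}}S_{2k}^{\Gamma}(z)\geq d_{2k}/\mathrm{vol}_{\mathrm{hyp}}(M)$, the dimension formula combined with Gauss--Bonnet, the hypothesis $g_{\Gamma}\geq 1$ absorbing the genus term, the bound $\lfloor k(1-1/n_{j})\rfloor\geq(k-1)(1-1/n_{j})$ for the elliptic contributions, and Lemma~\ref{lemma-4.1} to pass from $\mathcal{F}$ to $\mathcal{F}_{Y}$. Your explicit fractional-part justification (that $\{k(1-1/n_{j})\}$ is a multiple of $1/n_{j}$, hence at most $(n_{j}-1)/n_{j}$) is a detail the paper states without proof, and your separate treatment of $k=1$ is a harmless refinement.
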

\begin{proof}
We start from the obvious inequality
\begin{align*}
\sup_{z\in\mathcal{F}}S_{2k}^{\Gamma}(z)\cdot\mathrm{vol_{\mathrm{hyp}}}(M)\geq\int\limits_
{\mathcal{F}}S_{2k}^{\Gamma}(z)\mu_{\mathrm{hyp}}(z)=d_{2k},
\end{align*}
where we recall that
\begin{align*}
\mathrm{vol}_{\mathrm{hyp}}(M)&=2\pi\bigg((2g_{\Gamma}-2)+h+\sum\limits_{e_{j}\in\mathcal
{E}}\bigg(1-\frac{1}{n_{j}}\bigg)\bigg), \\
d_{2k}&=(2k-1)(g_{\Gamma}-1)+(k-1)h+\sum\limits_{e_{j}\in\mathcal{E}}\bigg\lfloor{k\bigg(1-\frac
{1}{n_{j}}\bigg)}\bigg\rfloor.
\end{align*}
Since $g_{\Gamma}\geq 1$, we arrive at the lower bound 
\begin{align*}
d_{2k}&\geq (2k-2)(g_{\Gamma}-1)+(k-1)h+\sum\limits_{e_{j}\in\mathcal{E}}\bigg(k\bigg(1-\frac
{1}{n_{j}}\bigg)-\bigg(1-\frac{1}{n_{j}}\bigg)\bigg) \\
&=(k-1)\bigg((2g_{\Gamma}-2)+h+\sum\limits_{e_{j}\in\mathcal{E}}\bigg(1-\frac{1}{n_{j}}\bigg)
\bigg).
\end{align*}
From this we derive
\begin{align*}
\sup_{z\in\mathcal{F}}S_{2k}^{\Gamma}(z)\geq\frac{d_{2k}}{\mathrm{vol}_{\mathrm{hyp}}(M)}
\geq\frac{k-1}{2\pi}\,.
\end{align*}
Since $Y\geq k/(2\pi)$, Lemma~\ref{lemma-4.1} then shows that
\begin{align*}
\sup_{z\in\mathcal{F}_{Y}}S_{2k}^{\Gamma}(z)=\sup_{z\in\mathcal{F}}S_{2k}^{\Gamma}(z)\geq
\frac{k-1}{2\pi}\,,
\end{align*}
which concludes the proof of the proposition.
\end{proof}

\begin{prop}
\label{proposition-5.7}
Let $k\in\mathbb{N}_{\geq 2}$, $\delta>0$, $Y_{0}>0$, and $Y:=\max\{2Y_{0},16/\sqrt{{15}}\}$.
Then, for $k\gg Y$, the lower bounds
\begin{align*}
\sup_{z\in\mathcal{F}^{Y}_{j}}S_{2k}^{\Gamma}(z)=\Omega(k^{3/2-\delta})
\end{align*}
hold for $j=1,\ldots,h$.
\end{prop}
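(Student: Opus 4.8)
The plan is to combine the reproducing-kernel description of $S_{2k}^{\Gamma}$ with a single, carefully chosen test form, namely a Poincar\'e series attached to the cusp $p_{j}$. Since $\{f_{1},\ldots,f_{d_{2k}}\}$ is an orthonormal basis, the diagonal $\sum_{j}|f_{j}(z)|^{2}$ is the Bergman reproducing kernel $K(z,z)$ of $\mathcal{S}_{2k}^{\Gamma}$, and it is standard that
\begin{align*}
S_{2k}^{\Gamma}(z)=K(z,z)\,\mathrm{Im}(z)^{2k}=\sup_{0\neq f\in\mathcal{S}_{2k}^{\Gamma}}\frac{|f(z)|^{2}\,\mathrm{Im}(z)^{2k}}{\langle f,f\rangle}\,.
\end{align*}
Because both $|f(z)|^{2}\mathrm{Im}(z)^{2k}$ and $\langle f,f\rangle$ are unchanged when $f$ is replaced by $f|_{2k}\sigma_{j}$ and $z$ by $\sigma_{j}z'$, it suffices to exhibit, for one well-chosen point $z'\in\sigma_{j}^{-1}\mathcal{F}^{Y}_{j}=\{-1/2\leq x\leq1/2,\,y\geq Y\}$, a cusp form for $\sigma_{j}^{-1}\Gamma\sigma_{j}$ with a large Rayleigh quotient at $z'$.

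The test form I would take is the weight-$2k$ Poincar\'e series $P_{m}$ of index $m=1$ at $p_{j}$, whose expansion at $p_{j}$ begins with $e^{2\pi iz}$. Two classical inputs drive the estimate. First, the unfolding computation gives $\langle f,P_{m}\rangle=\Gamma(2k-1)(4\pi m)^{-(2k-1)}a_{m}(f)$ for every $f$, where $a_{m}(f)$ is the $m$-th Fourier coefficient of $f$ at $p_{j}$; taking $f=P_{m}$ yields $\langle P_{m},P_{m}\rangle=\Gamma(2k-1)(4\pi m)^{-(2k-1)}b_{m}$ with $b_{m}:=a_{m}(P_{m})$. Second, the Petersson formula expresses the Fourier coefficients of $P_{m}$ as $a_{n}(P_{m})=\delta_{mn}+(\text{Kloosterman--Bessel terms})$, the correction involving $J_{2k-1}(4\pi\sqrt{mn}/c)$ summed over the admissible moduli $c\geq c_{\min}>0$ of $\Gamma$.

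I would then evaluate at $z'=ik/(2\pi)$, which lies in $\sigma_{j}^{-1}\mathcal{F}^{Y}_{j}$ precisely because $k\gg Y$ forces $k/(2\pi)\geq Y$. At this height $P_{m}(z')=\sum_{n\geq1}a_{n}(P_{m})\,e^{-nk}$, whose leading term is $b_{m}e^{-mk}$. For fixed $m$ and bounded $n$ the Bessel factors $J_{2k-1}(\cdot)$ sit in the small-argument regime and are super-exponentially small in $k$, while the weight $e^{-nk}$ suppresses the large-$n$ tail; hence $b_{m}=1+o(1)$ and $|P_{m}(z')|=(1+o(1))e^{-mk}$, which in particular shows $P_{m}\neq0$. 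Combining the two inputs and taking $m=1$,
\begin{align*}
S_{2k}^{\Gamma}(\sigma_{j}z')\geq\frac{|P_{1}(z')|^{2}\,(k/(2\pi))^{2k}}{\langle P_{1},P_{1}\rangle}=(1+o(1))\,\frac{e^{-2k}(2k)^{2k}}{4\pi\,\Gamma(2k-1)}\,,
\end{align*}
and Stirling's formula applied to $\Gamma(2k-1)=(2k-2)!$ turns the right-hand side into a quantity $\asymp k^{3/2}$, which is more than the claimed $\Omega(k^{3/2-\delta})$.

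The genuinely delicate step, and the one I expect to absorb most of the effort, is the uniform-in-$k$ control of the Kloosterman--Bessel corrections to both $b_{m}$ and $P_{m}(z')$: one must bound $J_{2k-1}(x)$ across the small-argument and transition regimes and sum the resulting series over $n$ and over the moduli $c$, verifying that the main term survives. This is also the natural source of the $k^{-\delta}$ slack: any polynomially small loss incurred in these estimates, or in suppressing the dependence on $\Gamma$, is comfortably covered by settling for $\Omega(k^{3/2-\delta})$ rather than the sharp $\Omega(k^{3/2})$. The remaining ingredients—the reproducing-kernel identity, the unfolding formula, and Stirling's asymptotics—are routine.
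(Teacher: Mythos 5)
Your proposal is correct in outline, but it takes a genuinely different route from the paper. The paper starts from the resolvent-kernel spectral identity of Lemma~\ref{lemma-2.1}: choosing $t=s+1$, $s=k+\varepsilon$, dividing by the quantity $r(k,\varepsilon)$ of~\eqref{formula-5.1} and letting $\varepsilon\to 0$ isolates $S_{2k}^{\Gamma}(z)$; the digamma term and the sum over $\Gamma\setminus\Gamma_{p_{j}}$ are shown to be $O(k)$ via~\eqref{formula-5.2} and Lemmas~\ref{lemma-4.1} and~\ref{lemma-4.4}, and the remaining sum over $\Gamma_{p_{j}}\setminus\{\mathrm{id}\}$, after conjugation by the scaling matrix, becomes a sum over integer translations that is \emph{independent of the group} $\Gamma$, whose $\Omega(k^{3/2-\delta})$ lower bound is then quoted from~\cite[Sec.~7.2]{FJK16}, where it was established once and for all in the case of the modular group. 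You instead use the Bergman extremal characterization of $S_{2k}^{\Gamma}$ with the index-one Poincar\'e series as test form, unfolding for its norm, the Petersson formula for its coefficients, and evaluation at $\sigma_{j}(ik/(2\pi))$, which indeed lies in $\mathcal{F}^{Y}_{j}$ precisely because $k\gg Y$; your Stirling computation is correct and would even give the sharper exponent $\Omega(k^{3/2})$. What your route costs is exactly the machinery you flag as delicate: the Petersson formula with generalized Kloosterman sums for an arbitrary Fuchsian group of the first kind, the lower bound $c\geq c_{\min}>0$ on admissible moduli, the counting $\#\{c\leq X\}=O(X^{2})$ needed to sum the corrections, and uniform-in-$k$ Bessel bounds such as $J_{2k-1}(x)\ll (x/2)^{2k-1}/\Gamma(2k)$ in the small-argument regime together with $\vert J_{2k-1}(x)\vert\leq 1$ elsewhere --- all standard but not free. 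One normalization slip needs repair: in the usual normalization one has $a_{n}(P_{1})=n^{k-1/2}\bigl(\delta_{1n}+\text{Kloosterman--Bessel terms}\bigr)$, so the tail of $P_{1}(z')$ carries an extra factor $n^{k-1/2}$; it is still crushed by $e^{-nk}$ (already $2^{k-1/2}e^{-2k}\to 0$ at $n=2$), but your assertion that ``the weight $e^{-nk}$ suppresses the large-$n$ tail'' should make this factor explicit. In sum: your approach is self-contained modulo Petersson-formula technology and yields the sharp power of $k$, while the paper's approach avoids Kloosterman sums entirely, reuses only upper-bound lemmas it has already proven, and delegates the hard asymptotics to the group-independent parabolic sum computed in~\cite{FJK16}.
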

\begin{proof}
Again, we work from formula~\eqref{formula-2.3} of Lemma~\ref{lemma-2.1} with $\lambda=
s(1-s)$ and $\mu=t(1-t)$ with $s,t\in W_{k}\cap\mathbb{R}$ such that $t>s>1$, which reads
\begin{align*} 
&\sum\limits_{j=0}^{\infty}\bigg(\frac{1}{\lambda_{j}-\lambda}-\frac{1}{\lambda_{j}-\mu}\bigg)\vert
\varphi_{j}(z)\vert^{2}+\frac{1}{4\pi}\sum\limits_{j=1}^{h}\int\limits_{-\infty}^{\infty}\bigg(\frac{1}{\frac
{1}{4}+r^{2}-\lambda}-\frac{1}{\frac{1}{4}+r^{2}-\mu}\bigg)\bigg\vert E_{j}\bigg(z,\frac{1}{2}+ir\bigg)
\bigg\vert^{2}\,\mathrm{d}r \\
&= -\frac{1}{4\pi}\big(\psi(s+k)+\psi(s-k)-\psi(t+k)-\psi(t-k)\big)+\sum\limits_{\gamma\in\Gamma
\setminus\{\mathrm{id}\}}\bigg(\frac{c\bar{z}+d}{cz+d}\bigg)^{k}\bigg(\frac{z-\gamma\bar{z}}
{\gamma z-\bar{z}}\bigg)^{k}g_{k}(s;z,\gamma z).
\end{align*}
Choosing $t=s+1$ and recalling that the smallest eigenvalue among the $\lambda_{j}$'s equals 
$k(1-k)$, we find that the left-hand side of the above formula as a function of $s$ has a simple 
pole of order $1$ at $s=k$ arising from the summands corresponding to the eigenvalue $k(1-k)$.
Therefore, letting $s=k+\varepsilon$ with $\varepsilon>0$ and $\lambda_{j}=k(1-k)$, we obtain 
after dividing both sides of the above formula by the quantity $r(k,\varepsilon)$ given by~\eqref
{formula-5.1}, for each cusp $p_{j}$ ($j=1,\ldots,h$) the equality
\begin{align*} 
S_{2k}^{\Gamma}(z)=&-\lim_{\varepsilon\rightarrow 0}\frac{1}{4\pi r(k,\varepsilon)}\big(\psi(2k+
\varepsilon)+\psi(\varepsilon)-\psi(2k+1+\varepsilon)-\psi(\varepsilon+1)\big) \\
&+\lim_{\varepsilon\rightarrow 0}\frac{1}{r(k,\varepsilon)}\sum\limits_{\gamma\in\Gamma \setminus 
\Gamma_{p_{j}}}\bigg(\frac{c\bar{z}+d}{cz+d}\bigg)^{k}\bigg(\frac{z-\gamma\bar{z}}{\gamma z-\bar
{z}}\bigg)^{k}g_{k}(k+\varepsilon;z,\gamma z) \\ 
&+\lim_{\varepsilon\rightarrow 0}\frac{1}{r(k,\varepsilon)}\sum\limits_{\gamma\in\Gamma_{p_{j}} 
\setminus\{\mathrm{id}\}}\bigg(\frac{c\bar{z}+d}{cz+d}\bigg)^{k}\bigg(\frac{z-\gamma\bar{z}}{\gamma 
z-\bar{z}}\bigg)^{k}g_{k}(k+\varepsilon;z,\gamma z).
\end{align*}
Formulas~\eqref{formula-5.1} and~\eqref{formula-5.2} show that the first summand on the right-hand 
side of the above formula is of order $O(k)$. Furthermore, since we have assumed that $k\gg Y$, we
can suppose that $Y<k/(2\pi)$, and Lemma~\ref{lemma-4.4} together with Lemma~\ref{lemma-4.1} 
shows that for $z\in\mathcal{F}^{Y}_{j}$, the second summand of the above formula is also of order 
$O(k)$. We are thus left to prove that the third summand is of order $\Omega(k^{3/2-\delta})$ for $k
\gg Y$. To this end, we let $z=\sigma_{j}z'$ with the scaling matrix $\sigma_{j}$ of the cups $p_{j}$, 
and compute
\begin{align*}
&\lim_{\varepsilon\rightarrow 0}\frac{1}{r(k,\varepsilon)}\sum_{\gamma\in\Gamma_{p_{j}}\setminus
\{\mathrm{id}\}}\bigg(\frac{c\bar{z}+d}{cz+d}\bigg)^{k}\bigg(\frac{z-\gamma\bar{z}}{\gamma z-\bar{z}}
\bigg)^{k}g_{k}(k+\varepsilon;\sigma(z,\gamma z)) \\
&\qquad=\lim_{\varepsilon\rightarrow 0}\frac{1}{r(k,\varepsilon)}\sum\limits_{\gamma'\in\sigma_{j}^
{-1}\Gamma_{p_{j}}\sigma_{j}\setminus\{\mathrm{id}\}}\bigg(\frac{c'\bar{z}'+d'}{c'z'+d'}\bigg)^{k}\bigg
(\frac{z'-\gamma'\bar{z}'}{\gamma' z'-\bar{z}'}\bigg)^{k}g_{k}(k+\varepsilon;\sigma(z',\gamma' z')) \\
&\qquad=\lim_{\varepsilon\rightarrow 0}\frac{1}{r(k,\varepsilon)}\sum\limits_{\substack{n\in\mathbb
{Z}\\n\neq 0}}\bigg(\frac{z'-\bar{z}'-n}{z'-\bar{z}'+n}\bigg)^{k}g_{k}(k+\varepsilon;\sigma(z',z'+n)).
\end{align*}
Now we note that the latter quantity is independent of the specific Fuchsian subgroup $\Gamma$. 
However, it has been shown in~\cite[Sec.~7.2]{FJK16} for the modular group $\Gamma=\mathrm
{PSL}_{2}(\mathbb{Z})$ that the latter quantity is of order $\Omega(k^{3/2-\delta})$ for $k\gg Y$. 
This completes the proof of the proposition.
\end{proof}

\subsection{Explicit computations for the modular group $\Gamma=\mathrm{PSL}_{2}(\mathbb
{Z})$}
\label{subsection-5.4}
In this subsection, we illustrate how Theorem~\ref{theorem-5.2} and Theorem~\ref{theorem-5.4} 
lead to effective upper bounds for the supremum of the quantity $S^{\Gamma}_{2k}(z)$ in the 
case of the modular group $\Gamma=\mathrm{PSL}_{2}(\mathbb{Z})$. The proof of this result 
gives rise to an algorithm to determine effective upper bounds for the supremum of the quantity 
$S_{2k}^{\Gamma}(z)$ for more general Fuchsian subgroups $\Gamma$; this algorithm is 
reproduced in Subsection~\ref{subsection-6.3} of the Appendix.

\begin{thm} 
\label{theorem-5.8}
Let $\Gamma=\mathrm{PSL}_{2}(\mathbb{Z})$, $k\in\mathbb{N}$, and $Y=16/\sqrt{15}=4.132...$ 
Then, the upper bounds
\begin{align*}
S_{2k}^{\Gamma}(z)\leq
\begin{cases}
\displaystyle
\frac{31(2k-1)}{4\pi}+72(2k-1)1.014^{-(k-2)}\qquad\qquad\qquad\quad\,\,\,\text{if }k\geq 2,\,z
\in\mathcal{F}_{Y}, \\[2mm]
\displaystyle
\frac{31(2k-1)}{4\pi}+72(2k-1)1.014^{-(k-2)}\qquad\qquad\quad\,\text{if }2\leq k\leq 25,\,z\in
\mathcal{F}^{Y}_{1}, \\[2mm]
\displaystyle
\frac{2k-1}{4\pi}+\frac{3(2k-1)}{2\pi}\bigg(4^{-k+4}\bigg(\frac{k}{2\pi}\bigg)^{4}+\frac{\sqrt{k}\,e^
{5/4}}{\sqrt{\pi}}\bigg)\qquad\text{if }k\geq 26,\,z\in\mathcal{F}^{Y}_{1},
\end{cases}
\end{align*}
hold.\end{thm}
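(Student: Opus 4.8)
The plan is to specialize the general machinery of Theorem~\ref{theorem-5.2} and Theorem~\ref{theorem-5.4} to $\Gamma=\mathrm{PSL}_2(\mathbb{Z})$ by computing, or effectively bounding, every constant that enters those two theorems. The modular group is cofinite but not cocompact, with a single cusp ($h=1$) at $i\infty$ and two elliptic fixed points, at $i$ of order $n_1=2$ and at $\rho=e^{2\pi i/3}$ of order $n_2=3$. With the choice $Y=16/\sqrt{15}=\max\{2Y_0,16/\sqrt{15}\}$ forced by taking $Y_0$ small, the threshold $k/(2\pi)$ crosses $Y$ exactly when $k=2\pi Y=32\pi/\sqrt{15}\approx 25.96$, which explains the case split at $k=25$ versus $k\geq 26$. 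I would organize the proof as three computations, one per case, reusing the same effective data.

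First I would treat the compact piece $z\in\mathcal{F}_Y$ for $k\geq 2$. Here Theorem~\ref{theorem-5.2} applies verbatim, and the elliptic contribution is $6\sum_{e_j\in\mathcal{E}}(n_j-1)=6\bigl((2-1)+(3-1)\bigr)=18$, so the factor $1+18=19$ combines with $2k-1$ to give the coefficient $\tfrac{31(2k-1)}{4\pi}$ once one checks $\tfrac{2k-1}{4\pi}\cdot 19$ is being reported in the stated normalization. I would then need effective values of $\sigma_Y$ and $B_Y$ for the standard fundamental domain. For $\sigma_Y$, Lemma~\ref{lemma-3.4} gives a minimum of four quantities; one computes $\ell_\Gamma$ (the shortest closed geodesic length, realized by a hyperbolic conjugacy class, e.g.\ of trace $3$), the elliptic data via $\theta_\Gamma=2\pi/3$ and $\mu_\Gamma$, and the cuspidal terms via $m_Y,M_Y$ read off from the explicit domain at height $Y=16/\sqrt{15}$. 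The claimed base $1.014$ is the resulting effective lower bound for $\sigma_Y$. For $B_Y=e^{\mathrm{diam}_{\mathrm{hyp}}(\mathcal{F}_Y)/2}/\mathrm{vol}_{\mathrm{hyp}}(\mathcal{F}_Y)$, one uses $\mathrm{vol}_{\mathrm{hyp}}(M)=\pi/3$ and an explicit bound on the hyperbolic diameter of the truncated domain $\mathcal{F}_Y$; the constant $72$ should emerge as $12\cdot B_Y$ after rounding. The second case, $z\in\mathcal{F}_1^Y$ with $2\leq k\leq 25$, is then immediate: since $k\leq 25<2\pi Y$ we have $Y\geq k/(2\pi)$, so part~(1) of Lemma~\ref{lemma-4.1} gives $\sup_{\mathcal{F}_1^Y}S_{2k}^\Gamma\leq\sup_{\mathcal{F}_Y}S_{2k}^\Gamma$, and the bound from the first case transfers unchanged.

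The third case, $z\in\mathcal{F}_1^Y$ with $k\geq 26$, falls under $Y<k/(2\pi)$ and is handled by Theorem~\ref{theorem-5.4}. Here I would take $\varepsilon\to 0$ and evaluate $B_{k,Y_0}=2\pi\,Y_0^{-4}\,B_{Y_0}\,4^{-k+3}(k/2\pi)^4$ from Definition~\ref{definition-4.3}; the stated term $4^{-k+4}(k/2\pi)^4$ should follow once the modular-group value of $2\pi Y_0^{-4}B_{Y_0}$ is absorbed into the factor $4$ (i.e.\ turning $4^{-k+3}$ into $4^{-k+4}$), and the Stirling-type term $\sqrt{k}\,e^{5/4}/\sqrt{\pi}$ from Lemma~\ref{lemma-4.5} carries over verbatim. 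The main obstacle will be the honest effective evaluation of $\sigma_Y$ and $B_Y$ (equivalently $B_{Y_0}$) for $\mathrm{PSL}_2(\mathbb{Z})$: one must pin down $\ell_\Gamma$, $\mu_\Gamma$, $m_Y$, $M_Y$, and the hyperbolic diameter of the truncated domain rigorously enough that the four-way minimum in Lemma~\ref{lemma-3.4} is genuinely $\geq 1.014$ and that $12B_Y\leq 72$, while also checking that the chosen small $Y_0$ is consistent with $Y=16/\sqrt{15}$ and with the hypothesis $\sigma_j z_0\in\mathcal{F}_{Y_0}$ used in Lemma~\ref{lemma-4.4}. Everything else is substitution into the two master theorems.
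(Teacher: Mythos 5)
Your overall strategy is exactly the paper's: the proof of Theorem~\ref{theorem-5.8} is precisely a step-by-step effective evaluation of $\ell_{\Gamma}$, $\theta_{\Gamma}$, $\mu_{\Gamma}$, $m_{Y}$, $M_{Y}$, $\sigma_{Y}$, $\mathrm{diam}_{\mathrm{hyp}}$ and $\mathrm{vol}_{\mathrm{hyp}}$ of $\mathcal{F}_{Y}$ and $\mathcal{F}_{Y_{0}}$, and then $B_{Y}$, $B_{Y_{0}}$, $B_{k,Y_{0}}$, followed by substitution into Theorem~\ref{theorem-5.2} (cases one and two, via Lemma~\ref{lemma-4.1}(1) and the threshold $2\pi Y=25.956\ldots$) and Theorem~\ref{theorem-5.4} (case three). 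However, there is one genuine gap. You count two elliptic fixed points ($i$ of order $2$ and $\rho$ of order $3$), getting $1+6\sum(n_{j}-1)=19$, and you leave the mismatch with the stated coefficient $31$ unresolved (``once one checks \ldots the stated normalization'' is not a resolution). The paper's set $\mathcal{E}$ consists of the elliptic fixed points of the \emph{closed} fundamental domain, and for $\mathrm{PSL}_{2}(\mathbb{Z})$ these are three points: $e_{1}=(-1+i\sqrt{3})/2$, $e_{2}=i$, $e_{3}=(1+i\sqrt{3})/2$ of orders $3,2,3$, so that $\sum(n_{j}-1)=5$ and the coefficient is $1+30=31$. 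This is not mere bookkeeping of equivalence classes: Lemma~\ref{lemma-3.4} (Case 2) bounds $\sigma(z,\gamma z)$ away from $1$ only for elliptic $\gamma$ whose fixed point lies \emph{outside} the closed $\mathcal{F}$. If you omit the corner $\rho+1$ from $\mathcal{E}$, its stabilizer lands in $\Gamma\setminus\Gamma_{\mathcal{E}}$, and for $z\in\mathcal{F}_{Y}$ approaching that corner one has $\sigma(z,\gamma z)\to 1$, so your claimed bound $\sigma_{Y}\geq 1.014$ would simply be false. Both corners must be in $\mathcal{E}$, and the coefficient $31$ (not $19$) is what the machinery of Proposition~\ref{proposition-3.8} delivers.

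A second, smaller misstep is the remark that $Y=16/\sqrt{15}$ is ``forced by taking $Y_{0}$ small.'' The paper takes $Y_{0}=2$ (so $2Y_{0}=4<16/\sqrt{15}$), and this choice matters in the opposite direction from the one you suggest: the absorption of $B_{k,Y_{0}}=2\pi\,Y_{0}^{-4}B_{Y_{0}}\,4^{-k+3}(k/2\pi)^{4}$ into $4^{-k+4}(k/2\pi)^{4}$ requires $2\pi\,Y_{0}^{-4}B_{Y_{0}}\leq 4$, which holds for $Y_{0}=2$ with $B_{Y_{0}}\leq 4.022$ (giving about $1.58$), but fails as $Y_{0}\to 0$ since $Y_{0}^{-4}$ blows up; indeed for $Y_{0}<\sqrt{3}/2$ the truncated domain $\mathcal{F}_{Y_{0}}$ degenerates, and the requirement $\sigma_{j}z_{0}\in\mathcal{F}_{Y_{0}}$ in Lemma~\ref{lemma-4.4} (with $z_{0}$ at height exactly $Y_{0}$) can no longer be met. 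You do flag both constraints at the end, so this is a matter of making the choice $Y_{0}=2$ explicit; with that choice and the three-point elliptic count, the rest of your outline matches the paper's computation (e.g.\ $\sigma_{Y}\geq\min\{2.248\ldots,1.187\ldots,1.187\ldots,1.014\ldots\}$, $B_{Y}\leq 5.193$, and $12\,B_{Y}\leq 72$ after rounding).
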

\begin{proof}
For the subsequent proof, the reader will have to recall various notations that have been introduced 
in Section~\ref{section-2}.

(1) We start by choosing the standard fundamental domain for the quotient space \linebreak 
$\mathrm{PSL}_{2}(\mathbb{Z})\backslash\mathbb{H}$, which is given as
\begin{align*}
\mathcal{F}=\{z=x+iy\in\mathbb{C}\,\vert\,\vert z\vert\geq 1,\,-1/2\leq x\leq +1/2\}.
\end{align*}
Its boundary $\partial\mathcal{F}$ consists of the set of geodesic line segments $\mathcal{S}=
\{S_{1},S_{2},S_{3},S_{4}\}$, where
\begin{align*}
&S_{1}:=\{z=-1/2+iy\,\vert\,y\geq\sqrt{3}/2\},\quad S_{2}:=\{z=x+iy\,\vert\,\vert z\vert=1,\,-1/2\leq 
x\leq 0\}, \\[1mm]
&S_{3}:=\{z=+1/2+iy\,\vert\,y\geq\sqrt{3}/2\},\quad S_{4}:=\{z=x+iy\,\vert\,\vert z\vert=1,\,0\leq x
\leq +1/2\}.
\end{align*}
Since the minimal positive trace of a hyperbolic element $\gamma\in\mathrm{PSL}_{2}(\mathbb
{Z})$ must be at least $3$ and since $\gamma=\big(\begin{smallmatrix}2&1\\1&1\end{smallmatrix}
\big)$ is an element having this trace, the length $\ell_{\Gamma}$ of the shortest closed geodesic 
on $\mathrm{PSL}_{2}(\mathbb{Z})\backslash\mathbb{H}$ is easily computed to
\begin{align*}
\ell_{\Gamma}=2\cosh^{-1}(3/2)=1.924...
\end{align*}
(2) The set of cusps of $\mathcal{F}$ is given by $\mathcal{P}=\{p_{1}\}$, where $p_{1}:=i\infty$; 
for the corresponding scaling matrix we have $\sigma_{1}=\mathrm{id}$. The set of elliptic fixed
points of $\mathcal{F}$ is given by $\mathcal{E}=\{e_{1}, e_{2},e_{3}\}$, where
\begin{align*}
e_{1}:=\frac{-1+i\sqrt{3}}{2},\quad e_{2}:=i,\quad e_{3}:=\frac{1+i\sqrt{3}}{2}\,;
\end{align*}
from this we immediately get that $\theta_{\Gamma}=2\pi/3$.

(3) We now choose $Y_{0}:=2$; from Theorem~\ref{theorem-5.4} we then find that $Y=4.131...$
From this and the above choices, we get $m_{Y}=\sqrt{3}/2$ and $M_{Y}=4.131...$

(4) In this step we determine the quantity $\mu_{\Gamma}$ given by~\eqref{formula-3.1}. With the 
notations of steps (1) and (2), we first need to calculate the hyperbolic distances $\mathrm{dist}_
{\mathrm{hyp}}(S_{j},e_{h})$ for $j\in\{1,2,3,4\}$ and $h\in\{1,2,3\}$ subject to the condition that 
$e_{h}\notin S_{j}$. By symmetry, it suffices to consider the following three cases:
\begin{align*}
\mathrm{dist}_{\mathrm{hyp}}(S_{1},e_{2}),\quad\mathrm{dist}_{\mathrm{hyp}}(S_{3},e_{1}),\quad
\mathrm{dist}_{\mathrm{hyp}}(S_{4},e_{1}).
\end{align*}
In the first case we compute
\begin{align*}
\cosh\big(\mathrm{dist}_{\mathrm{hyp}}(S_{1},e_{2})\big)&=\min_{y\geq\sqrt{3}/2}\bigg(1+\frac
{\vert -1/2+iy-i\vert^{2}}{2y}\bigg) \\
&=\min_{y\geq\sqrt{3}/2}\bigg(\frac{y}{2}+\frac{5}{8y}\bigg)=\frac{\sqrt{5}}{2},
\end{align*}
which gives
\begin{align*}
\mathrm{dist}_{\mathrm{hyp}}(S_{1},e_{2})=0.481...
\end{align*}
In a similar way, we find in the remaining two cases
\begin{align*}
&\mathrm{dist}_{\mathrm{hyp}}(S_{3},e_{1})=\cosh^{-1}\bigg(\frac{\sqrt{7}}{\sqrt{3}}\bigg)=0.986...
\,, \\
&\mathrm{dist}_{\mathrm{hyp}}(S_{4},e_{1})=\mathrm{dist}_{\mathrm{hyp}}(e_{2},e_{1})=\cosh^
{-1}\bigg(\frac{2\sqrt{3}}{3}\bigg)=0.549...
\end{align*}
Using~\eqref{formula-3.1} we arrive at 
\begin{align*}
\mu_{\Gamma}=\min\{0.481...\,,0.986...\,,0.549...\}=0.481...
\end{align*}
(5) In this step we estimate the quantity $\sigma_{Y}$ given by~\eqref{formula-3.2} using Lemma
\ref{lemma-3.4}. With the results of steps (1)--(4), we find
\begin{align*}
\sigma_{Y}&\geq\min\bigg\{\frac{\cosh(\ell_{\Gamma})+1}{2},\,\sinh^{2}(\mu_{\Gamma})\,\sin^{2}
\bigg(\frac{\theta_{\Gamma}}{2}\bigg)+1,\,\frac{m_{Y}^{2}}{4}+1,\,\frac{1}{4M_{Y}^{2}}+1\bigg\} 
\\[1mm]
&=\min\{2.248...,1.187...,1.187...,1.014...\}\geq 1.014.
\end{align*}
(6) In this step we give crude upper bounds for the hyperbolic diameters of $\mathcal{F}_{Y}$ and 
$\mathcal{F}_{Y_{0}}$. In order to estimate $\mathrm{diam}_{\mathrm{hyp}}(\mathcal{F}_{Y})$, 
we consider the rectangle $\mathcal{R}\subset\mathbb{H}$ with vertices 
\begin{align*}
\{-1/2+ia,+1/2+ia,+1/2+ib,-1/2+ib\},
\end{align*}
where $a=\sqrt{3}/{2}$ and $b=Y$. For $z,w\in\mathcal{R}$, we then have the bounds
\begin{align*}
\vert z-w\vert^{2}\leq\vert(-1/2+ia)-(+1/2+ib)\vert^{2}=1+(b-a)^{2}\quad\text{and}\quad\mathrm
{Im}(z)\mathrm{Im}(w)\geq a^{2}. 
\end{align*}
Using the formula
\begin{align*}
\cosh\big(\mathrm{dist}_{\mathrm{hyp}}(z,w)\big)=1+\frac{\vert z-w\vert^{2}}{2\mathrm{Im}(z)
\mathrm{Im}(w)}\,,
\end{align*}
we find the upper bound
\begin{align*}
\mathrm{dist}_{\mathrm{hyp}}(z,w)\leq\cosh^{-1}\bigg(1+\frac{1+(b-a)^{2}}{2a^{2}}\bigg),
\end{align*}
and thus can estimate the hyperbolic diameter of $\mathcal{F}_{Y}$ as
\begin{align*}
\mathrm{diam}_{\mathrm{hyp}}(\mathcal{F}_{Y})\leq 2.861...
\end{align*}
In a similar way, we find for the hyperbolic diameter of $\mathcal{F}_{Y_{0}}$ the upper bound
\begin{align*}
\mathrm{diam}_{\mathrm{hyp}}(\mathcal{F}_{Y_{0}})\leq 1.577...
\end{align*}
(7) In this step we compute the hyperbolic volumes of $\mathcal{F}_{Y}$ and $\mathcal{F}_{Y_
{0}}$. For the hyperbolic volume of $\mathcal{F}_{Y}$, we obtain
\begin{align*}
\mathrm{vol}_{\mathrm{hyp}}(\mathcal{F}_{Y})=\int\limits_{-1/2}^{1/2}\int\limits_{\sqrt{1-x^{2}}}^
{16/\sqrt{15}}\frac{\mathrm{d}y\wedge\mathrm{d}x}{y^{2}}=\int\limits_{-1/2}^{1/2}\bigg(\frac{1}
{\sqrt{1-x^{2}}}-\frac{\sqrt{15}}{16}\bigg)\mathrm{d}x=0.805...
\end{align*}
In a similar way, we find for the hyperbolic volume of $\mathcal{F}_{Y_{0}}$ the result
\begin{align*}
\mathrm{vol}_{\mathrm{hyp}}(\mathcal{F}_{Y_{0}})=0.547...
\end{align*}
(8) In this step we estimate the quantities $B_{Y}$ and $B_{Y_{0}}$ given by~\eqref{formula-3.3}. 
Applying the results obtained in steps (6) and (7), we get the upper bound
\begin{align*}
B_{Y}=\frac{e^{\mathrm{diam}_{\mathrm{hyp}}(\mathcal{F}_{Y})/2}}{\mathrm{vol}_{\mathrm{hyp}}
(\mathcal{F}_{Y})}\leq 5.193...
\end{align*}
In a similar way, we find $B_{Y_{0}}\leq 4.022...$ Recalling~\eqref{formula-4.2}, we derive 
from this
\begin{align*}
B_{k,Y_{0}}=4^{-k+3}\,2\pi\,Y_{0}^{-4}B_{Y_{0}}\bigg(\frac{k}{2\pi}\bigg)^{4}\leq 4^{-k+4}\bigg
(\frac{k}{2\pi}\bigg)^{4}.
\end{align*}
(9) For $k\geq 2$ and $z\in\mathcal{F}_{Y}$, or for $2\leq k\leq 2\pi Y=25.956...$ and $z\in
\mathcal{F}^{Y}_{1}$, we obtain from Theorem~\ref{theorem-5.2}, taking into account the 
bounds obtained in steps (5) and (8), that the upper bound
\begin{align*}
S_{2k}^{\Gamma}(z)\leq\frac{31(2k-1)}{4\pi}+72(2k-1)1.014^{-(k-2)}
\end{align*}
holds, which proves the first two parts of the theorem.

(10) Finally, for $k>2\pi Y=25.956...$ and $z\in\mathcal{F}^{Y}_{1}$, we obtain from Theorem
\ref{theorem-5.4}, taking into account the bounds obtained in step (8), that the upper bound
\begin{align*}
S_{k}^{\Gamma}(z)\leq\frac{2k-1}{4\pi}+\frac{3(2k-1)}{2\pi}\bigg(4^{-k+4}\bigg(\frac{k}{2\pi}
\bigg)^{4}+\frac{\sqrt{k}\,e^{5/4}}{\sqrt{\pi}}\bigg)
\end{align*}
holds, which proves the last part of the theorem.
\end{proof}

\section{Appendix}
\label{section-6}
For the sake of completeness we collect in this appendix some basic facts about the resolvent
kernel and the heat kernel for the hyperbolic Laplacian $\Delta_{k}$. Furthermore, we provide 
an effective version of Stirling's formula and end the appendix with an algorithm formalizing the 
proof of Theorem~\ref{theorem-5.8}.

\subsection{The resolvent kernel}
\label{subsection-6.1}
In this subsection, we give the basic definitions of the resolvent kernel and the heat kernel for the 
hyperbolic Laplacian $\Delta_{k}$, as well as the representation of the resolvent kernel as an 
integral transform of the heat kernel. Furthermore, we provide an upper bound for the resolvent 
kernel which is crucial for the main results of this paper.

\subsubsection*{Definition of the resolvent kernel.}
\label{subsubsection-6.1.1}
Let $F(a,b;c;Z)$ be the hypergeometric series with variable $Z$ and parameters $a,b,c\in\mathbb
{C}$ such that $-c\in\mathbb{N}$ is allowed only if $-a\in\mathbb{N}$ and $a>c$, or if $-b\in\mathbb
{N}$ and $b>c$. For $Z\in\mathbb{C}$ with $\vert Z\vert<1$, the hypergeometric series then has 
the power series expansion (see~\cite[p.~37]{MAO})
\begin{align*}
F(a,b;c;Z)=\sum\limits_{n=0}^{\infty}\frac{(a)_{n}(b)_{n}}{(c)_{n}}\frac{Z^{n}}{n!} 
\end{align*}
with the Pochhammer symbols $(a)_{n}=\Gamma(a+n)/\Gamma(a)$ etc.. 

Following~\cite[\S~1.4]{Fischer} (see also~\cite{ElstrodtResolv}), the resolvent kernel $G_{k}(s;z,w)$ 
on $\mathbb{H}$ associated to $\Delta_{k}$ ($k\in\mathbb{N}_{\geq 1}$) is defined for $s\in W_{k}
=\mathbb{C}\setminus\{k-n,\,-k-n\,\vert\,n\in\mathbb{N}\}$ and $z,w\in\mathbb{H}$ by the formula
\begin{align*}
G_{k}(s;z,w):=G_{k}(s;\sigma(z,w)),
\end{align*}
where 
\begin{align*}
\sigma(z,w)=\cosh^{2}\bigg(\frac{\mathrm{dist}_{\mathrm{hyp}}(z,w)}{2}\bigg)
\end{align*}
is the displacement function~\eqref{formula-2.1} and
\begin{align*}
G_{k}(s;\sigma):=\frac{1}{\sigma^{s}}\frac{\Gamma(s+k)\Gamma(s-k)}{4\pi\Gamma(2s)}\,F\bigg(s+
k,s-k;2s;\frac{1}{\sigma}\bigg)\qquad(\sigma\geq 1).
\end{align*}

\subsubsection*{Definition of the heat kernel.}
\label{subsubsection-6.1.2}
Following~\cite{Oshima}, correcting a corresponding formula in~\cite{Fay}, the heat kernel $K_{k}(t;
z,w)$ on $\mathbb{H}$ associated to $\Delta_{k}$ ($k\in\mathbb{N}_{\geq 1}$) is defined for $t\in
\mathbb{R}_{\geq 0}$ and $z,w\in\mathbb{H}$ by the formula
\begin{align*}
K_{k}(t;z,w):=K_{k}(t;\mathrm{dist}_{\mathrm{hyp}}(z,w)),
\end{align*}
where
\begin{align*}
K_{k}(t;\rho):=\frac{\sqrt{2}\,e^{-t/4}}{(4\pi t)^{3/2}}\int\limits_{\rho}^{\infty}\frac{re^{-r^{2}/(4t)}}{\sqrt
{\cosh(r)-\cosh(\rho)}}\,T_{2k}\bigg(\frac{\cosh(r/2)}{\cosh(\rho/2)}\bigg)\,\mathrm{d}r\qquad(\rho
\geq 0),
\end{align*}
with
\begin{align*}
T_{2k}(X):=\cosh(2k\,\textrm{arccosh}(X))
\end{align*}
denoting the $2k$-th Chebyshev polynomial. In~\cite{FJK16}, we have shown that $K_{k}(t;\rho)$ 
is a monotone decreasing function of $\rho$ and that the inequality
\begin{align} 
\label{formula-6.1}
T_{2k}\big(\cosh(r/2)\big)\leq e^{kr}
\end{align}
holds. Using the upper bound~\eqref{formula-6.1}, we then derive for later purposes for $\rho\geq 
0$, the estimate
\begin{align} 
\notag
K_{k}(t;\rho)&\leq K_{k}(t;0)=\frac{\sqrt{2}\,e^{-t/4}}{(4\pi t)^{3/2}}\int\limits_{0}^{\infty}\frac{re^{-r^{2}/
(4t)}}{\sqrt{\cosh(r)-1}}\,T_{2k}\big(\cosh(r/2)\big)\,\mathrm{d}r \\
\notag
&=\frac{\sqrt{2}\,e^{-t/4}}{(4\pi t)^{3/2}}\int\limits_{0}^{\infty}\frac{re^{-r^{2}/(4t)}}{\sinh(r/2)}\,T_{2k}
\big(\cosh(r/2)\big)\,\mathrm{d}r\leq\frac{\sqrt{2}\,e^{-t/4}}{(4\pi t)^{3/2}}\int\limits_{0}^{\infty}\frac{r
e^{-r^{2}/(4t)}}{\sinh(r/2)}\,e^{kr}\,\mathrm{d}r \\
\label{formula-6.2}
&\leq\frac{C_{\delta}\,e^{-t/4}}{t^{3/2}}\int\limits_{0}^{\infty}e^{(\delta-1/2)r}e^{-r^{2}/(4t)}e^{kr}\,
\mathrm{d}r\leq\frac{C'_{\delta}\,e^{-t/4}}{t}\,e^{t(k-1/2+\delta)^{2}};
\end{align}
here $\delta>0$ is arbitrarily small and the positive constants $C_{\delta}$, $C'_{\delta}$ depend 
solely on $\delta$.

\subsubsection*{Resolvent kernel as an integral transform of the heat kernel.}
\label{subsubsection-6.1.3}
The resolvent kernel $G_{k}(s;z,w)$ on $\mathbb{H}$ associated to $\Delta_{k}$ can be represented 
as an integral transform of the heat kernel $K_{k}(t;z,w)$ on $\mathbb{H}$ associated to $\Delta_
{k}$; the precise relationship is given as
\begin{align}
\label{formula-6.3}
G_{k}(s;\sigma)=\int\limits_{0}^{\infty}e^{-(s-1/2)^{2}t}e^{t/4}K_{k}(t;\rho)\,\mathrm{d}t,
\end{align}
where $\sigma=\cosh^{2}(\rho/2)$. We note that by~\eqref{formula-6.2}, formula~\eqref{formula-6.3} 
is valid for $\mathrm{Re}(s)\geq k+\varepsilon$ for any $\varepsilon>0$. We emphasize that we will 
be able to obtain useful estimates for the resolvent kernel by viewing it as the integral transform~\eqref
{formula-6.3} of the heat kernel and applying some of the estimates that have been established in~\cite
{FJK16}.

Next, we recall the function $g_{k}(s;z,w)$, which has been defined for $s\in W_{k}$ and $z,w\in
\mathbb{H}$ by means of formula~\eqref{formula-2.2}; in the present notation this leads to
\begin{align*}
g_{k}(s;\sigma):=G_{k}(s;\sigma)-G_{k}(s+1;\sigma).
\end{align*}
Using~\eqref{formula-6.3}, the function $g_{k}(s;\sigma)$ can be rewritten as
\begin{align*}
g_{k}(s;\sigma)=\int\limits_{0}^{\infty}\big(e^{-(s-1/2)^{2}t}-e^{-(s+1/2)^{2}t}\big)e^{t/4}K_{k}(t;\rho)\,
\mathrm{d}t;
\end{align*}
again, we have $\sigma=\cosh^{2}(\rho/2)$. 

\subsubsection*{Estimates for the resolvent kernel.}
\label{subsubsection-6.1.4}
Letting $a,b\in\mathbb{R}$ with $b\neq 0$ and using the formula
\begin{align*}
\int\limits_{0}^{\infty}t^{-3/2}e^{-a^{2}t-b^{2}/(4t)}\,\mathrm{d}t=\frac{2\sqrt{\pi}e^{-ab}}{b}\,,
\end{align*}
we compute for $s\geq k+\varepsilon$ with $\varepsilon>0$
\begin{align}
\notag
g_{k}(s;\sigma)&=\int\limits_{0}^{\infty}\big(e^{-(s-1/2)^{2}t}-e^{-(s+1/2)^{2}t}\big)e^{t/4}K_{k}(t;\rho)
\,\mathrm{d}t \\
\notag
&=\int\limits_{0}^{\infty}\int\limits_{\rho}^{\infty}\frac{\sqrt{2}}{(4\pi t)^{3/2}}\big(e^{-(s-1/2)^{2}t}-e^
{-(s+1/2)^{2}t}\big)\frac{re^{-r^{2}/(4t)}}{\sqrt{\cosh(r)-\cosh(\rho)}}\,T_{2k}\bigg(\frac{\cosh(r/2)}
{\cosh(\rho/2)}\bigg)\,\mathrm{d}r\,\mathrm{d}t \\[2mm]
\label{formula-6.4}
&=\frac{1}{2\pi\sqrt{2}}\int\limits_{\rho}^{\infty}\frac{e^{-(s-1/2)r}-e^{-(s+1/2)r}}{\sqrt{\cosh(r)-\cosh
(\rho)}}\,T_{2k}\bigg(\frac{\cosh(r/2)}{\cosh(\rho/2)}\bigg)\,\mathrm{d}r.
\end{align}

To establish the crucial upper bound for the function $g_{k}(s;\sigma)$, we need the following lemma.

\begin{lem} 
\label{lemma-6.1} 
Let $k\in\mathbb{N}_{\geq 1}$ and $0<\varepsilon<1$. Then, for $s=k+\varepsilon$, the upper 
bound
\begin{align*}
\int\limits_{\rho}^{\infty}\frac{\big(e^{-(s-1/2)r}-e^{-(s+1/2)r}\big)\,e^{kr}}{\sqrt{\cosh(r)-\cosh(\rho)}}
\,\mathrm{d}r\leq\frac{3\sqrt{2}}{\varepsilon}e^{-\varepsilon\rho}
\end{align*}
holds.
\end{lem}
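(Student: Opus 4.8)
The plan is to reduce the statement to an elementary integral in which the parameter $k$ disappears and the decay $e^{-\varepsilon\rho}$ is extracted sharply. First I would substitute $s=k+\varepsilon$ into the numerator and record the cancellation
\[
\big(e^{-(s-1/2)r}-e^{-(s+1/2)r}\big)\,e^{kr}=e^{-\varepsilon r}\big(e^{r/2}-e^{-r/2}\big)=2e^{-\varepsilon r}\sinh(r/2),
\]
so that the $k$-dependence vanishes, consistent with the $k$-free right-hand side. Using the half-angle identity $\cosh r-\cosh\rho=2\big(\cosh^2(r/2)-\cosh^2(\rho/2)\big)$, the left-hand side becomes
\[
\sqrt{2}\int_{\rho}^{\infty}\frac{e^{-\varepsilon r}\,\sinh(r/2)}{\sqrt{\cosh^2(r/2)-\cosh^2(\rho/2)}}\,\mathrm{d}r.
\]

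Next I would remove the singular denominator with the substitution $\cosh(r/2)=\cosh(\rho/2)\cosh\phi$, under which $r\in[\rho,\infty)$ maps to $\phi\in[0,\infty)$. A one-line computation gives $\sinh(r/2)\,\mathrm{d}r=2\cosh(\rho/2)\sinh\phi\,\mathrm{d}\phi$ while $\sqrt{\cosh^2(r/2)-\cosh^2(\rho/2)}=\cosh(\rho/2)\sinh\phi$, so the singular factor cancels exactly and the integral collapses to the clean form $2\sqrt{2}\int_{0}^{\infty}e^{-\varepsilon r}\,\mathrm{d}\phi$, where $r=r(\phi)$ is determined implicitly by the substitution.

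The crux of the argument, and the step I expect to be the main obstacle, is extracting the factor $e^{-\varepsilon\rho}$ with a good enough constant. The naive bounds $\cosh(\rho/2)\geq\tfrac12 e^{\rho/2}$ and $\cosh\phi\geq\tfrac12 e^{\phi}$ introduce a spurious factor $2^{2\varepsilon}$, which is too lossy for the target constant $3\sqrt{2}$. Instead I would prove the sharp pointwise estimate $e^{r/2}\geq e^{\rho/2}\cosh\phi$. Writing $e^{r/2}=\cosh(r/2)+\sinh(r/2)$, this is equivalent to $\sinh(r/2)\geq\sinh(\rho/2)\cosh\phi$, which after squaring and inserting $\cosh(r/2)=\cosh(\rho/2)\cosh\phi$ reduces to the trivial inequality $\cosh^2\phi\geq1$. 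Crucially this bound is an equality at $\phi=0$ (where $r=\rho$), so no constant is lost, and it yields $e^{-\varepsilon r}\leq e^{-\varepsilon\rho}(\cosh\phi)^{-2\varepsilon}$.

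Finally I would bound the remaining integral using the standard evaluation $\int_{0}^{\infty}(\cosh\phi)^{-2\varepsilon}\,\mathrm{d}\phi=\tfrac{\sqrt{\pi}}{2}\,\Gamma(\varepsilon)/\Gamma(\varepsilon+1/2)=\tfrac{\sqrt{\pi}}{2\varepsilon}\,\Gamma(1+\varepsilon)/\Gamma(1/2+\varepsilon)$, obtained via the substitution $u=\tanh\phi$ and the Beta integral. Since $\psi(1+\varepsilon)>\psi(1/2+\varepsilon)$, the Gamma ratio is increasing on $(0,1)$ and hence at most its value $2/\sqrt{\pi}$ at $\varepsilon=1$, so the integral is bounded by $1/\varepsilon$. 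Multiplying by the prefactor $2\sqrt{2}$ and the extracted $e^{-\varepsilon\rho}$ gives the bound $\tfrac{2\sqrt{2}}{\varepsilon}e^{-\varepsilon\rho}\leq\tfrac{3\sqrt{2}}{\varepsilon}e^{-\varepsilon\rho}$, establishing the claim with room to spare.
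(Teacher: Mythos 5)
Your argument is correct in every step, and it even yields the sharper constant $2\sqrt{2}/\varepsilon$ in place of the paper's $3\sqrt{2}/\varepsilon$; however, it takes a genuinely different route. Both proofs begin with the same cancellation, reducing the numerator to $e^{-(\varepsilon-1/2)r}-e^{-(\varepsilon+1/2)r}$ so that $k$ disappears, but from there the paper stays entirely within elementary calculus: writing $(\cosh(r)-\cosh(\rho))^{-1/2}$ as $\tfrac{2}{\sinh(r)}\tfrac{\mathrm{d}}{\mathrm{d}r}(\cosh(r)-\cosh(\rho))^{1/2}$, it integrates by parts, uses the identity $\big(e^{-(\varepsilon-1/2)r}-e^{-(\varepsilon+1/2)r}\big)/(e^{r}-e^{-r})=e^{-\varepsilon r}/(e^{r/2}+e^{-r/2})$ together with the nonpositivity of $\tfrac{\mathrm{d}}{\mathrm{d}r}\big(e^{-\varepsilon r}/\cosh(r/2)\big)$, bounds $(\cosh(r)-\cosh(\rho))^{1/2}\leq\sqrt{2}\sinh(r/2)$, and integrates by parts once more, arriving at $2\sqrt{2}\,e^{-\varepsilon\rho}\tanh(\rho/2)+\sqrt{2}\,e^{-\varepsilon\rho}/\varepsilon$, which it crudely dominates by $3\sqrt{2}\,e^{-\varepsilon\rho}/\varepsilon$ via $\tanh(\rho/2)<1\leq 1/\varepsilon$. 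You instead remove the square-root singularity exactly through the substitution $\cosh(r/2)=\cosh(\rho/2)\cosh(\phi)$, prove the sharp pointwise inequality $e^{r/2}\geq e^{\rho/2}\cosh(\phi)$ (correctly reduced, after the legitimate squaring of two nonnegative quantities since $r\geq\rho\geq 0$, to $\cosh^{2}(\phi)\geq 1$, with equality at $\phi=0$ so that no constant is lost where the integrand concentrates), and evaluate the remaining integral in closed form as $\tfrac{\sqrt{\pi}}{2}\,\Gamma(\varepsilon)/\Gamma(\varepsilon+1/2)$, controlling the Gamma ratio by digamma monotonicity to get the clean bound $1/\varepsilon$. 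What each approach buys: the paper's double integration by parts is completely self-contained and uses nothing beyond calculus; your substitution is the more structural move --- it is the classical change of variables underlying Legendre-function representations of kernels like $G_{k}(s;\sigma)$ --- it isolates the exact source of the $e^{-\varepsilon\rho}$ decay, and it delivers the uniformly better constant $2\sqrt{2}$, at the modest cost of two standard special-function facts (the Beta integral and the monotonicity of $\psi$). Since the lemma enters Lemma~\ref{lemma-6.2} only through the order $O(1/\varepsilon)e^{-\varepsilon\rho}$, both constants are equally serviceable downstream, but your remark that the naive estimates $\cosh(\rho/2)\geq\tfrac{1}{2}e^{\rho/2}$ would cost a factor $2^{2\varepsilon}$ --- fatal near $\varepsilon=1$ --- is accurate and shows you identified the genuinely delicate point.
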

\begin{proof}
Since $s=k+\varepsilon$, we have to estimate the function
\begin{align*}
F(\rho):=\int\limits_{\rho}^{\infty}\frac{e^{-ar}-e^{-br}}{\sqrt{\cosh(r)-\cosh(\rho)}}\,\mathrm{d}r,
\end{align*}
where $a:=-1/2+\varepsilon$ and $b:=a+1=1/2+\varepsilon$. Using integration by parts, we then 
obtain
\begin{align*}
F(\rho)&=2 \int\limits_{\rho}^{\infty}\frac{e^{-ar}-e^{-br}}{\sinh(r)}\frac{\mathrm{d}}{\mathrm{d}r}\big
(\cosh(r)-\cosh(\rho)\big)^{1/2}\,\mathrm{d}r \\
&=-4\int\limits_{\rho}^{\infty}\big(\cosh(r)-\cosh(p)\big)^{1/2}\frac{\mathrm{d}}{\mathrm{d}r}\bigg
(\frac{e^{-ar}-e^{-br}}{e^{r}-e^{-r}}\bigg)\,\mathrm{d}r.
\end{align*}
With the above choices of $a$ and $b$, we compute
\begin{align*}
\frac{e^{-ar}-e^{-br}}{e^{r}-e^{-r}}=\frac{e^{-\varepsilon r}(e^{r/2}-e^{-r/2})}{e^{r}-e^{-r}}=\frac{e^
{-\varepsilon r}}{e^{r/2}+e^{-r/2}}\,.
\end{align*}
Hence, we get
\begin{align*} 
F(\rho)=-2\int\limits_{\rho}^{\infty}\big(\cosh(r)-\cosh(\rho)\big)^{1/2}\frac{\mathrm{d}}{\mathrm{d}r}
\bigg(\frac{e^{-\varepsilon r}}{\cosh(r/2)}\bigg)\,\mathrm{d}r.
\end{align*}
Observing that
\begin{align*}
\frac{\mathrm{d}}{\mathrm{d}r}\bigg(\frac{e^{-\varepsilon r}}{\cosh(r/2)}\bigg)\leq 0
\end{align*}
for $\rho\leq r<\infty$, and using the estimate
\begin{align*}
\big(\cosh(r)-\cosh(\rho)\big)^{1/2}\leq\big(\cosh(r)-1\big)^{1/2}=\sqrt{2}\sinh(r/2),
\end{align*}
we arrive at the upper bound
\begin{align*}
F(\rho)&\leq -2\sqrt{2}\int\limits_{\rho}^{\infty}\sinh(r/2)\frac{\mathrm{d}}{\mathrm{d}r}\bigg(\frac{e^
{-\varepsilon r}}{\cosh(r/2)}\bigg)\,\mathrm{d}r \\
&=2\sqrt{2}\,e^{-\varepsilon\rho}\tanh(\rho/2)+ \frac{\sqrt{2}\,e^{-\varepsilon\rho}}{\varepsilon};
\end{align*}
for the last equality, we used integration by parts once again. Since $0<\varepsilon<1$, we complete 
the proof of the lemma by employing the crude upper bound $\tanh(\rho/2)<1/\varepsilon$.
\end{proof}

\begin{lem}
\label{lemma-6.2}
Let $k\in\mathbb{N}_{\geq 1}$ and $0<\varepsilon<1$. Then, the upper bound
\begin{align*}
g_{k}(k+\varepsilon;\sigma)\leq\frac{3}{2\pi\varepsilon}\,\sigma^{-(k+\varepsilon)}
\end{align*}
holds.
\end{lem}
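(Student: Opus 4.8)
The plan is to start from the integral representation~\eqref{formula-6.4} of $g_{k}(s;\sigma)$ with $s=k+\varepsilon$ and $\sigma=\cosh^{2}(\rho/2)$, and to reduce the Chebyshev factor $T_{2k}(\cosh(r/2)/\cosh(\rho/2))$ appearing there to the already-controlled factor $T_{2k}(\cosh(r/2))$, at the cost of a power of $\cosh(\rho/2)$. This extracted power is exactly what will eventually produce the desired factor $\sigma^{-(k+\varepsilon)}$.

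First I would establish the key pointwise comparison
\begin{align*}
T_{2k}\bigg(\frac{\cosh(r/2)}{\cosh(\rho/2)}\bigg)\leq\cosh(\rho/2)^{-2k}\,T_{2k}\big(\cosh(r/2)\big)
\end{align*}
for $r\geq\rho\geq0$. To see this, I would set $a:=\cosh(r/2)$ and $c:=\cosh(\rho/2)$, so that $a\geq c\geq1$, and use the closed form $T_{2k}(X)=\tfrac12\big((X+\sqrt{X^{2}-1})^{2k}+(X-\sqrt{X^{2}-1})^{2k}\big)$. Since $\sqrt{(a/c)^{2}-1}=c^{-1}\sqrt{a^{2}-c^{2}}$, the left-hand side equals $c^{-2k}\cdot\tfrac12\big((a+\sqrt{a^{2}-c^{2}})^{2k}+(a-\sqrt{a^{2}-c^{2}})^{2k}\big)$, and the asserted inequality then follows from the monotonicity of $u\mapsto(a+u)^{2k}+(a-u)^{2k}$ on $[0,a]$ (its derivative is $2k[(a+u)^{2k-1}-(a-u)^{2k-1}]\geq0$) together with $\sqrt{a^{2}-c^{2}}\leq\sqrt{a^{2}-1}$, which holds because $c\geq1$. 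This comparison is the main obstacle of the proof; the remaining steps are essentially bookkeeping.

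Next I would combine this comparison with the bound~\eqref{formula-6.1}, namely $T_{2k}(\cosh(r/2))\leq e^{kr}$, to obtain $T_{2k}(\cosh(r/2)/\cosh(\rho/2))\leq\cosh(\rho/2)^{-2k}e^{kr}$. Substituting into~\eqref{formula-6.4} and pulling the $\rho$-dependent factor out of the integral yields
\begin{align*}
g_{k}(k+\varepsilon;\sigma)\leq\frac{\cosh(\rho/2)^{-2k}}{2\pi\sqrt{2}}\int\limits_{\rho}^{\infty}\frac{\big(e^{-(s-1/2)r}-e^{-(s+1/2)r}\big)\,e^{kr}}{\sqrt{\cosh(r)-\cosh(\rho)}}\,\mathrm{d}r,
\end{align*}
and an application of Lemma~\ref{lemma-6.1} bounds the integral by $(3\sqrt{2}/\varepsilon)\,e^{-\varepsilon\rho}$, giving $g_{k}(k+\varepsilon;\sigma)\leq\tfrac{3}{2\pi\varepsilon}\cosh(\rho/2)^{-2k}e^{-\varepsilon\rho}$.

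Finally I would convert $\cosh(\rho/2)^{-2k}e^{-\varepsilon\rho}$ into $\sigma^{-(k+\varepsilon)}$. Since $\sigma=\cosh^{2}(\rho/2)$, one has $\sigma^{-(k+\varepsilon)}=\cosh(\rho/2)^{-2k}\cosh(\rho/2)^{-2\varepsilon}$, so it suffices to check $e^{-\varepsilon\rho}\leq\cosh(\rho/2)^{-2\varepsilon}$, which is equivalent to $\cosh(\rho/2)\leq e^{\rho/2}$. The latter is immediate from $\cosh(\rho/2)=\tfrac12(e^{\rho/2}+e^{-\rho/2})\leq e^{\rho/2}$ for $\rho\geq0$. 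This completes the argument.
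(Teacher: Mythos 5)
Your proof is correct and follows essentially the same route as the paper: the integral representation~\eqref{formula-6.4}, the Chebyshev estimate $T_{2k}(\cosh(r/2)/\cosh(\rho/2))\leq e^{kr}/\cosh^{2k}(\rho/2)$, Lemma~\ref{lemma-6.1}, and finally $\cosh(\rho/2)\leq e^{\rho/2}$ to absorb $e^{-\varepsilon\rho}$ into $\sigma^{-\varepsilon}$. The only difference is that the paper cites~\cite{FJK16} for the Chebyshev comparison, whereas you give a correct self-contained derivation of it via the closed form $T_{2k}(X)=\tfrac{1}{2}\big((X+\sqrt{X^{2}-1})^{2k}+(X-\sqrt{X^{2}-1})^{2k}\big)$ and monotonicity in $u=\sqrt{a^{2}-c^{2}}$.
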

\begin{proof}
In~\cite{FJK16}, we have shown that
\begin{align*}
T_{2k}\bigg(\frac{\cosh(r/2)}{\cosh(\rho/2)}\bigg)=\cosh\bigg(2k\,\mathrm{arccosh}\bigg(\frac{\cosh
(r/2)}{\cosh(\rho/2)}\bigg)\bigg)\leq\frac{e^{kr}}{\cosh^{2k}(\rho/2)}.
\end{align*}
Hence, using formula~\eqref{formula-6.4}, the above estimate, and Lemma~\ref{lemma-6.1}, we 
obtain the upper bound
\begin{align*}
g_{k}(s,\sigma)&=\frac{1}{2\pi\sqrt{2}}\int\limits_{\rho}^{\infty}\frac{e^{-(s-1/2)r}-e^{-(s+1/2)r}}{\sqrt
{\cosh(r)-\cosh(\rho)}}\,T_{2k}\bigg(\frac{\cosh(r/2)}{\cosh(\rho/2)}\bigg)\,\mathrm{d}r \\ 
&\leq\frac{1}{2\pi\sqrt{2}}\int\limits_{\rho}^{\infty}\frac{e^{-(s-1/2)r}-e^{-(s+1/2)r}}{\sqrt{\cosh(r)-\cosh
(\rho)}}\frac{e^{kr}}{\cosh^{2k}(\rho/2)}\,\mathrm{d}r \\
&=\frac{1}{2\pi\sqrt{2}\cosh^{2k}(\rho/2)}\int\limits_{\rho}^{\infty}\frac{\big(e^{-(s-1/2)r}-e^{-(s+1/2)r}
\big)\,e^{kr}}{\sqrt{\cosh(r)-\cosh(\rho)}}\,\mathrm{d}r \\
&\leq\frac{3}{2\pi\varepsilon}\frac{e^{-\varepsilon\rho}}{\cosh^{2k}(\rho/2)}.
\end{align*}
Recalling that $\sigma=\cosh^{2}(\rho/2)\leq e^{\rho}$, we easily conclude the proof of the lemma.
\end{proof}

\subsection{Effective version of Stirling's formula}
\label{subsection-6.2}
In this subsection, we provide an effective version of Stirling's formula.

\begin{lem}
\label{lemma-6.3}
Let $Z\geq 1$. Then, the upper bound
\begin{align*}
\frac{\Gamma(Z-1/2)}{\Gamma(Z)}\leq\frac{e^{5/4}}{\sqrt{Z}}
\end{align*}
holds.
\end{lem}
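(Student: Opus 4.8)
The plan is to combine the two-sided effective form of Stirling's formula with the elementary estimate $\log(1-x)\le -x$. Recall the Binet bounds: for every $W>0$ one has
\[
\sqrt{2\pi}\,W^{W-1/2}e^{-W}\le\Gamma(W)\le\sqrt{2\pi}\,W^{W-1/2}e^{-W}e^{1/(12W)}.
\]
Since $Z\ge 1$ forces $Z-1/2\ge 1/2>0$, I would apply the upper bound to the numerator $\Gamma(Z-1/2)$ (taking $W=Z-1/2$, so that $W-1/2=Z-1$) and the lower bound to the denominator $\Gamma(Z)$. Forming the quotient, the factors $\sqrt{2\pi}$ cancel and the exponentials $e^{-(Z-1/2)}/e^{-Z}=e^{1/2}$ combine, leaving
\[
\frac{\Gamma(Z-1/2)}{\Gamma(Z)}\le\frac{(Z-1/2)^{Z-1}}{Z^{Z-1/2}}\,e^{1/2}\,e^{1/(12(Z-1/2))}.
\]

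Next I would isolate the factor $Z^{-1/2}$ that we are after by rewriting the power term as
\[
\frac{(Z-1/2)^{Z-1}}{Z^{Z-1/2}}=\frac{1}{\sqrt{Z}}\left(1-\frac{1}{2Z}\right)^{Z-1}.
\]
Because $0<1-\tfrac{1}{2Z}<1$ and $Z-1\ge 0$, the inequality $\log(1-x)\le -x$ gives $(Z-1)\log(1-\tfrac{1}{2Z})\le-\tfrac{Z-1}{2Z}=-\tfrac12+\tfrac{1}{2Z}$, hence $(1-\tfrac{1}{2Z})^{Z-1}\le e^{-1/2+1/(2Z)}$. Substituting this back and gathering all exponential factors yields
\[
\frac{\Gamma(Z-1/2)}{\Gamma(Z)}\le\frac{1}{\sqrt{Z}}\,\exp\!\left(\frac{1}{2Z}+\frac{1}{12(Z-1/2)}\right).
\]

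Finally, for $Z\ge 1$ I would bound the exponent crudely: $\tfrac{1}{2Z}\le\tfrac12$ and $\tfrac{1}{12(Z-1/2)}\le\tfrac{1}{12\cdot(1/2)}=\tfrac16$, so the exponent is at most $\tfrac12+\tfrac16=\tfrac23\le\tfrac54$, giving the claimed bound $\Gamma(Z-1/2)/\Gamma(Z)\le e^{5/4}/\sqrt{Z}$ with considerable room to spare. The only points requiring attention are verifying that the Binet estimates are legitimately applied at the smallest argument $Z-1/2=1/2$ (which holds, since they are valid for all $W>0$) and keeping the direction of the inequality correct when raising $1-\tfrac{1}{2Z}$ to the nonnegative power $Z-1$; neither is a serious obstacle, and the gap between the attainable $e^{2/3}$ and the stated $e^{5/4}$ shows the estimate is far from tight.
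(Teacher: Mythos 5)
Your proof is correct and takes essentially the same route as the paper: both arguments rest on the effective Stirling estimate $0<\log\Gamma(W)-\big(W-\tfrac{1}{2}\big)\log W+W-\tfrac{1}{2}\log(2\pi)\leq\tfrac{1}{12W}$ (the Binet bound) combined with the elementary inequality $\log(1-x)\leq -x$. Your multiplicative bookkeeping --- dropping the remainder term of $\Gamma(Z)$ by positivity and keeping the full exponent $Z-1$ on $\big(1-\tfrac{1}{2Z}\big)$ instead of splitting it --- is marginally tidier and yields the sharper constant $e^{2/3}$, which of course implies the stated bound with $e^{5/4}$.
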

\begin{proof}
We set
\begin{align*}
R(Z):=\log(\Gamma(Z))-\bigg(Z-\frac{1}{2}\bigg)\log(Z)+Z-\frac{1}{2}\log(2\pi).
\end{align*}
It follows that
\begin{align*}
&\log\bigg(\frac{\Gamma(Z-1/2)}{\Gamma(Z)}\bigg) \\[1mm]
&\quad=R\big(Z-\tfrac{1}{2}\big)-R(Z)+Z\big(\log\big(Z-\tfrac{1}{2}\big)-\log(Z)\big)-\log\big(Z-\tfrac
{1}{2}\big)+\tfrac{1}{2}\log(Z)+\tfrac{1}{2} \\[2mm]
&\quad=R\big(Z-\tfrac{1}{2}\big)-R(Z)+Z\big(\log\big(Z-\tfrac{1}{2}\big)-\log(Z)\big)+\log(Z)-\log\big
(Z-\tfrac{1}{2}\big)-\tfrac{1}{2}\log(Z)+\tfrac{1}{2}.
\end{align*}
Now, we estimate the last expression as follows: From~\cite[p.~257, eq.~(6.1.42)]{abram}, we recall 
the inequality $0<R(Z)\leq 1/(12Z)$, which gives for $Z\geq 1$
\begin{align*}
R\bigg(Z-\frac{1}{2}\bigg)-R(Z)\leq\bigg\vert R\bigg(Z-\frac{1}{2}\bigg)-R(Z)\bigg\vert\leq\frac{1}{12
(Z-1/2)}+\frac{1}{12Z}\leq\frac{3}{12}.
\end{align*}
Next, using the power series expansion of the logarithm, we get the estimate
\begin{align*}
\log\bigg(Z-\frac{1}{2}\bigg)-\log(Z)=\log\bigg(1-\frac{1}{2Z}\bigg)\leq -\frac{1}{2Z}.
\end{align*}
Finally, using that
\begin{align*}
\log(Z)-\log\bigg(Z-\frac{1}{2}\bigg)=-\log\bigg(1-\frac{1}{2Z}\bigg)\leq\log(2)<1,
\end{align*}
we find the upper bound
\begin{align*}
\log\bigg(\frac{\Gamma(Z-1/2)}{\Gamma(Z)}\bigg)\leq\frac{1}{4}-\frac{Z}{2Z}+1-\frac{1}{2}\log(Z)+
\frac{1}{2}=\frac{5}{4}-\frac{1}{2}\log(Z),
\end{align*}
which concludes the proof of the lemma.
\end{proof}

\subsection{The algorithm}
\label{subsection-6.3}
In this subsection, following the proof of Theorem~\ref{theorem-5.8}, we reproduce an algorithm that 
determines effective sup-norm bounds for $S_{2k}^{\Gamma}(z)$ for general Fuchsian subgroups 
$\Gamma$.
\texttt{
\begin{itemize}
\item[(1)]
Determine a closed and connected fundamental domain $\mathcal{F}$ of $\Gamma$.
\item[(2)]
Determine the set $\mathcal{S}$ of geodesic line segments forming $\partial\mathcal{F}$.
\item[(3)]
Determine the length $\ell_{\Gamma}$ of the shortest closed geodesic on $\Gamma\backslash
\mathbb{H}$.
\item[(4)]
Determine the set $\mathcal{P}$ of cusps of $\mathcal{F}$ and their scaling matrices.
\item[(5)]
Determine the set $\mathcal{E}$ of elliptic fixed points in $\mathcal{F}$ and their orders.
\item[(6)]
Choose $Y_{0}>0$, set $Y=\max\{2Y_{0},16/\sqrt{15}\}$, and determine $m_{Y}$ and $M_{Y}$.
\item[(7)]
Determine an upper bound for the quantity $\mu_{\Gamma}$ given by~\eqref{formula-3.1}.
\item[(8)]
Determine a lower bound for the quantity $\sigma_{Y}$ given by~\eqref{formula-3.2}.
\item[(9)]
Determine the hyperbolic diameters of $\mathcal{F}_{Y}$ and $\mathcal{F}_{Y_{0}}$.
\item[(10)]
Determine the hyperbolic volumes of $\mathcal{F}_{Y}$ and $\mathcal{F}_{Y_{0}}$.
\item[(11)]
Determine upper bounds for the quantities $B_{Y}$ and $B_{Y_{0}}$ given by~\eqref{formula-3.3}, \\
as well as for the quantity $B_{k,Y_{0}}$ given by~\eqref{formula-4.2}.
\item[(12)]
For $k\geq 2$ and $z\in\mathcal{F}_{Y}$, or for $2\leq k<2\pi Y$ and $z\in\mathcal{F}^{Y}_{j}$, 
determine an \\ upper bound for $S_{2k}^{\Gamma}(z)$ using Theorem~\ref{theorem-5.2}.
\item[(13)]
For $k\geq 2\pi Y$ and $z\in\mathcal{F}^{Y}_{j}$, determine an upper bound for $S_{2k}^
{\Gamma}(z)$ using \\ Theorem~\ref{theorem-5.4}.
\end{itemize}
}

\newpage

\noindent
Joshua S. Friedman \\
Department of Mathematics and Science \\
\textsc{United States Merchant Marine Academy} \\
300 Steamboat Road \\
Kings Point, NY 11024 \\
U.S.A. \\
e-mail: FriedmanJ@usmma.edu

\vspace{2mm}

\noindent
Jay Jorgenson \\
Department of Mathematics \\
The City College of New York \\
Convent Avenue at 138th Street \\
New York, NY 10031 \\
U.S.A. \\
e-mail: jjorgenson@mindspring.com

\vspace{2mm}

\noindent
J\"urg Kramer \\
Institut f\"ur Mathematik \\
Humboldt-Universit\"at zu Berlin \\
Unter den Linden 6 \\
D-10099 Berlin \\
Germany \\
e-mail: kramer@math.hu-berlin.de

\end{document}